\newtheorem{theorem}{Theorem}[section]
\newtheorem{lemma}[theorem]{Lemma}
\newtheorem{proposition}[theorem]{Proposition}
\theoremstyle{remark}
\newtheorem{remark}[theorem]{\it \bf{Remark}\/}
\numberwithin{equation}{section}
\def\section{\@startsection{section}{1}%
  \z@{1.5\linespacing\@plus\linespacing}{.5\linespacing}%
  {\normalfont\bfseries\large\centering}}
\newcommand{\be}{\begin{equation}}
\newcommand{\ee}{\end{equation}}
\newcommand{\bea}{\begin{eqnarray}}
\newcommand{\eea}{\end{eqnarray}}
\newcommand{\bee}{\begin{eqnarray*}}
\newcommand{\eee}{\end{eqnarray*}}
\def\pa{\partial}
\def\pr{\partial}
\def\RR{\mathbb{R}}
\def\de{\delta}
\def\ep{\varepsilon}
\def\fref#1{{\rm (\ref{#1})}}
\def\supess{\mathop{\operator@font Sup\,ess}}
\def\RR{\mathbb{R}}
\def\e{\varepsilon}
\def\fref#1{{\rm (\ref{#1})}}
\def\R2+{\RR ^2_+}
\def\lsl{\frac{\lambda_s}{\lambda}}
\def\pa{\partial}
\def\lim{\mathop{\rm lim}}
\def\sup{\mathop{\rm sup}}
\def\l{\lambda}
\def\log{{\rm log}}
\def\lsl{\frac{\lambda_s}{\lambda}}
\def\Phit{\widetilde{\Phi}}
\def\pa{\partial}
\def\Lamdba{\Lambda}
\def\pa{\partial}
\def\la{\langle}
\def\matchal{\mathcal}
\def\ra{\rangle}
\def\Mod{\textrm{Mod}}
\def\L{\mathcal L}
\def\Psit{\widetilde{\Psi}}
\begin{document}

\title[]{On strongly anisotropic type I blow up}
\author[F.Merle]{Frank Merle}
\address{LAGA, Universit\'e de Cergy Pontoise, France and IHES}
\email{merle@math.u-cergy.fr}
\author[P. Rapha\"el]{Pierre Rapha\"el}
\address{Laboratoire J.A. Dieudonn\'e, Universit\'e de Nice-Sophia Antipolis, France}
\email{praphael@unice.fr}
\author[J. Szeftel]{Jeremie Szeftel}
\address{Laboratoire Jacques-Louis Lions, Universit\'e Paris 6, France}
\email{jeremie.szeftel@upmc.fr}

\begin{abstract} 
We consider the energy super critical 4 dimensional semilinear heat equation $$\pa_tu=\Delta u+|u|^{p-1}u, \ \ x\in \Bbb R^4, \ \ p>5.$$ Let $\Phi(r)$ be a {\it three} dimensional radial self similar solution for the  {\it three} supercritical probmem as exhibited and studied in \cite{CRS}. We show the finite codimensional transversal stability of the corresponding blow up solution by exhibiting a manifold of  finite energy blow up solutions of the {\it four} dimensional problem with cylindrical symmetry which blows up as 
$$u(t,x)\sim \frac{1}{(T-t)^{\frac{1}{p-1}}}U(t,Y),  \ \ Y=\frac{x}{\sqrt{T-t}}$$ 
with the profile $U$ given to leading order by 
$$U(t,Y)\sim\frac{1}{(1+b(t)z^2)^{\frac 1{p-1}}}\Phi\left(\frac{r}{\sqrt{1+b(t)z^2}}\right), \ \ Y=(r,z), \ \ b(t)=\frac{c}{|\log(T-t)|}$$ 
corresponding to a constant profile $\Phi(r)$ in the $z$ direction reconnected to zero along the moving free boundary $|z(t)|\sim \frac{1}{\sqrt{b}}\sim \sqrt{|\log (T-t)|}.$ Our analysis revisits the stability analysis of the self similar ODE blow up \cite{BK, MZduke,MZgaffa} and combines it with the study of the Type I self similar blow up \cite{CRS}. This provides a robust canonical framework for the construction of strongly anisotropic blow up bubbles.
\end{abstract}

\maketitle



\section{Introduction}



\subsection{Type I and type II blow up}


Let us consider the focusing  nonlinear heat equation 
\begin{equation}\label{eq:heat}
\left\{\begin{array}{l}
\pr_tu = \Delta u + |u|^{p-1}u,\,\,\,\, (t,x)\in \mathbb{R}\times\mathbb{R}^d,\\
u_{|_{t=0}}=u_0,
\end{array}\right.
\end{equation}
where $p>1$. This model dissipates the total energy
\bea
E(u)=\frac{1}{2}\int |\nabla u|^2-\frac{1}{p+1}\int |u|^{p+1}, \,\,\,\,\frac{dE}{dt}=-\int (\pr_tu)^2<0
\eea
and admits a scaling invariance: if $u(t,x)$ is a solution, then so is
\bea
u_\l( t ,x)=\l^{\frac{2}{p-1}}u(\l^2 t, \l x),\,\,\, \l>0.
\eea
This transformation is an isometry on the homogeneous Sobolev space
$$\|u_\l(t,\cdot)\|_{\dot{H}^{s_c}}=\|u( t,\cdot)\|_{\dot{H}^{s_c}}\textrm{ for }s_c=\frac{d}{2}-\frac{2}{p-1}.$$
We address in this paper the question of the existence and stability of blow up dynamics in the energy super critical range $s_c>1$ emerging from well localized initial data. There is an important literature devoted to the question of the description of blow up solutions for \eqref{eq:heat} and we recall some key facts related to our analysis.\\

\noindent{\em Type I ODE blow-up}. Type I singularities blow up with the self similar speed $$\|u(t,\cdot)\|_{L^{\infty}}\sim \frac{1}{(T-t)^{\frac 1{p-1}}}.$$ These solutions concentrate to leading order at a point 
$$u(t,x)\sim \frac{1}{\l(t)^{\frac{2}{p-1}}}v\left(\frac{x}{\l(t)}\right),  \ \ \l(t)= \sqrt{T-t},$$ 
where the blow up profile $v$ solves the non linear elliptic equation
\be
\label{ellipticequation}
\Delta v-\frac 1 2\left(\frac{2}{p-1}v+y\cdot\nabla v\right) +|v|^{p-1}v=0.
\ee 
The ODE blow up corresponds to the special solution to \eqref{ellipticequation} $$v=\left(\frac{1}{p-1}\right)^{\frac 1{p-1}},$$ and the existence and stability of the associated blow up dynamics has been studied in the series of papers  \cite{Gi,Gi1,Gi2,Gi3, BK, MZduke,MZgaffa}.\\

\noindent{\em Type I self similar blow-up}. There also exist radial solutions to \eqref{ellipticequation} which vanish at infinity. They correspond to the shooting problem
\be
\label{vnnevokenone}
\left|\begin{array}{lll}\displaystyle\Phi''+\frac{d-1}{r}\Phi'-\frac 1 2\left(\frac{2}{p-1}\Phi+r\Phi'\right) +\Phi^p=0,\\[2mm]
\Phi'(0)=0,\\[2mm]
\displaystyle\lim_{r\to+\infty}\Phi(r)=0.
\end{array}\right.
\ee
A countable class of such solutions has been constructed using either a direct Lyapunov functional approach \cite{lepin,troy,buddone,buddselfsim} or a bifurcation argument \cite{bizon,CRS}, and these solutions satisfy 
\be
\label{propertiesphi}
\left|\begin{array}{ll}\Phi(r)\gtrsim \frac{1}{\la r\ra^{\frac{2}{p-1}}}\\[4mm]
\Phi\in \mathcal C^\infty[0,+\infty), \ \ |\pa_r^k\Phi|\lesssim_k\frac{1}{\la r\ra^{\frac{2}{p-1}+k}}, \ \ k\in \Bbb N,
\end{array}\right.
\ee
where $\la r\ra=\sqrt{1+r^2}$. In particular, these solutions have infinite energy, but they can be shown to be the blow up profile for a finite codimensional class of finite energy smooth initial data, \cite{CRS}, see also \cite{DS}. The finite codimension of self similar blow up initial data is in one to one correspondance with the nonpositive eigenmodes of the linearized operator restricted to radial functions: $$\mathcal L_r=-\pa_{rr}-\frac{d-1}{r}\pa_r+\frac 12\left(\frac{2}{p-1}+r\pa_r\right)-p\Phi^{p-1}$$ which is self adjoint for the weighted $e^{-\frac{r^2}{4}}r^{d-1}dr$ measure:
\be
\label{spectralgapeigenvalueintro}
\lambda_{-\ell_0}<\dots<\lambda_{-1}=-1<0<\l_{0}<\l_{1}<\dots, \ \ \lim_{j\to +\infty}\l_j=+\infty.
\ee

\noindent{\em Type II blow-up}. Type II singularities are slower than self similar $$\lim_{t\to T}(T-t)^{\frac 1{p-1}}\|u(t,\cdot)\|_{L^{\infty}}=+\infty.$$  Such dynamics have been ruled out in the radial class for $p<p_{JL}$ in \cite{MaMe1,MaMe2} where $p_{JL}$ denotes the Joseph-Lundgren exponent
 \be
\label{exponentpjl}
p_{JL}:=\left\{\begin{array}{ll} +\infty\ \ \mbox{for}\ \ d\leq 10,\\
1+\frac{4}{d-4-2\sqrt{d-1}}\ \ \mbox{for}\ \ d\geq 11,
\end{array}\right.
\ee
and the result is sharp since type II blow up solutions can be constructed for $p>p_{JL}$, \cite{HV, MaMe1, Mizo, Collotheat} in connection with the general approach developed in \cite{RaphRod,MRR,RSc} for equation (\ref{eq:heat}) and the corresponding semilinear wave and Schrodinger equation for $p>p_{JL}$.


\subsection{Dimensional reduction and anisotropic blow up}


There exist a variety of blow up problems where the construction relies on a dimensional reduction and the use of lower dimensional soliton like solutions.\\
 
A typical example is the construction of ring solutions for the two dimensional non linear Schr\"odinger equation $$i\pa_tu+\Delta u+u|u|^{p-1}=0, \ \ x\in \Bbb R^2,  \ \ 3<p\leq 5$$ as discovered in \cite{Rring}, \cite{Fring}, see also \cite{RSring}, \cite{MRSring}. For example for $p=5$, these solutions concentrate in finite time on the unit sphere $$u(t,r)\sim\frac{1}{\l(t)^{\frac 12}}Q\left(\frac{r-1}{\l(t)}\right)e^{i\gamma(t)}$$ where $Q$ is the one dimensional ground state solitary wave and $\l(t)$ corresponds to the stable blow up for the quintic problem in dimension one \cite{MRlog}: $$\l(t)\sim\sqrt{\frac{T-t}{|\log|\log(T-t)|}}.$$

A second class of problems concerns anisotropic (NLS) problems like $$i\pa_tu+\pa_{xx}u-\pa_{yy}^2u+u|u|^{p-1}=0, \ \ (x,y)\in \Bbb R^2$$ which have triggered a lot of attention in particular regarding numerical simulations \cite{FP1,FP2,LNM} or the construction of infinite energy self similar solutions \cite{KN}, and where anisotropic blow up with very different behaviours in the $x,y$ directions is expected.


\subsection{The cylindrical blow up problem}


We propose in this paper a systematic program for the construction of parabolic anisotropic blow up bubbles by a dimensional reduction from $d$ to $d-1$. In order to set up the problem and for the sake of simplicity, we consider the {\it four} dimensional focusing semilinear heat equation
\be
\label{nlh}
\pa_tu=\Delta u+|u|^{p-1}u, \ \ x\in \Bbb R^4
\ee
in the energy super critical zone $$p>5.$$ Note that $p=5$ is the critical power for $\Bbb R^3$ which is $p=3$ for $\Bbb R^4$. We decompose $x\in\Bbb R^4$ as
$$x=(x',z)\in \Bbb R^3\times \Bbb R, \ \ r=|x'|$$ 
and consider functions $f$ on $\Bbb R^4$ which have cylindrical symmetry and are even with respect to $z$, i.e. 
$$f(x)=f(r,z), \  \ f(r,-z)=f(r,z).$$ We call this symmetry even cylindrical symmetry. Since for any rotation matrix $R$ of $\Bbb R^3$ the transformations $u(t,x',z)\to u(t, Rx', z)$ and $u(t,x',z)\to u(t, x', -z)$ map a solution to \eqref{nlh} onto another solution to \eqref{nlh}, uniqueness provided by the Cauchy theory ensures that the even cylindrical symmetry is propagated by the flow.\\

Since $p>5$, $p_{JL}=+\infty$ and the only known blow up bubbles correspond to type I blow up bubbles with either the ODE or a non trivial 4 dimensional self similar profile. A basic observation however is now that any {\it three} dimensional radially symmetric type I self similar solution $u(t,r)$ as constructed in \cite{CRS} is formally a solution to the {\it four} dimensional equation \eqref{nlh}, but this solution is constant in the $z$ direction. Hence it has infinite energy and its dynamical role among solutions to \eqref{nlh} is unclear. 

\subsection{Statement of the result}

Our main claim is that there exist a blow up scenario emerging from finite energy initial data which to leading order reproduces the self similar three dimensional blow up. Hence this solution is nearly constant along the $z$ axis in a boundary layer $|z|\leq z(t)$. We equivalently claim that the 3-dimensional self similar blow up is transversally stable modulo a finite number of instability directions.

\begin{theorem}[Finite codimensional transversal stability of self similar blow up]
\label{thmmain} 
Let $\Phi(r)$ solve \eqref{vnnevokenone}, \eqref{propertiesphi} and assume that the following non degeneracy condition is fulfilled:  let $(\l_j)_{-\ell_0\leq j\leq -1}$ be given by \eqref{spectralgapeigenvalueintro}, then 
\be
\label{hypspectral}
\forall j\in \{-\ell_0,\dots,-2\}, \ \ -\l_j\notin \Bbb N.
\ee
Then,  there exists a finite codimensional smooth manifold of initial data $u_0$ with even cylindrical symmetry and finite energy satisfying \eqref{eq:theinitialformoftheinitialdata} \eqref{eq:theinitialformoftheinitialdatabis} such that the corresponding solution to \eqref{nlh} blows up in finite time $T<+\infty$ with the following sharp description of the singularity. For $t$ close enough to $T$, the solution decomposes in self similar variables $$u(t,x)=\frac{1}{(T-t)^{\frac{2}{p-1}}}U(t,Y), \ \ Y=\frac{x}{\sqrt{T-t}}$$ as $$U(t,Y)=\frac{1}{(1+b(t)z^2)^{\frac 1{p-1}}}\Phi\left(\frac{r}{\sqrt{1+b(t)z^2}}\right)+v(t,Y), \ \ Y=(r,z)$$ with $$\lim_{t\to T}\|v(t,\cdot)\|_{L^{\infty}}=0,$$ and the free boundary moves at the speed 
\be
\label{movementboundary}
\frac{1}{\sqrt{b(t)}}=c^*(1+o(1))\sqrt{|\log (T-t)|}, \ \ c^*=c^*(\Phi)>0.
\ee
\end{theorem}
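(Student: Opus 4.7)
\emph{Plan and setup.} The proof fits in the modulated profile / spectral reduction framework for self similar blow up \cite{CRS}, enriched by a modulation argument in the spirit of Bricmont--Kupiainen and Merle--Zaag \cite{BK,MZduke,MZgaffa} to extract the logarithmic law for $b(t)$. The first step is to switch to self similar variables $\tau=-\log(T-t)$, $Y=x/\sqrt{T-t}$ and $U(\tau,Y)=(T-t)^{1/(p-1)}u(t,x)$, so that \eqref{nlh} becomes
$$\pa_\tau U=\Delta_Y U-\tfrac 12 Y\cdot\na_Y U-\tfrac{1}{p-1}U+|U|^{p-1}U.$$
For the approximate profile $Q_b(r,z)=(1+bz^2)^{-1/(p-1)}\Phi(r/\sqrt{1+bz^2})$ a direct computation using \eqref{vnnevokenone} yields the remarkable identity
$$\calE_b:=\Delta_Y Q_b-\tfrac 12 Y\cdot\na_Y Q_b-\tfrac{Q_b}{p-1}+Q_b^p=\pa_{zz}Q_b,$$
so that the error is of size $O(b)$ and concentrated in the free boundary layer $bz^2\sim 1$. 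Higher order polynomial corrections in $b$ can then be added to improve the approximation to any order.

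\emph{Spectral picture.} Linearising at $Q_0=\Phi(r)$ gives
$$\L=-\Delta+\tfrac 12 Y\cdot\na+\tfrac{1}{p-1}-p\Phi(r)^{p-1},$$
self adjoint on $L^2(\rho\,dY)$ with $\rho=e^{-|Y|^2/4}$. On the even-in-$z$ cylindrical subspace, separation of variables $v(r,z)=f(r)h(z)$ with $h$ an even Hermite polynomial in $z$ (eigenvalue $k\in\NN$ of the 1D Ornstein--Uhlenbeck operator restricted to even functions) and $f$ an eigenfunction of $\mathcal{L}_r$ (eigenvalue $\l_j$) shows
$$\mathrm{Spec}(\L)=\{\,\l_j+k\ :\ j\geq -\ell_0,\ k\in\NN\,\}.$$
The hypothesis \eqref{hypspectral} rules out accidental zero eigenvalues (apart from the trivial direction attached to the blow up time translation) and makes the unstable set $\{\l_j+k<0\}$ a finite collection of cardinality $N$; this integer $N$ will be the codimension in the theorem.

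\emph{Modulation, logarithmic law and bootstrap.} Next, write $U(\tau,Y)=Q_{b(\tau)}(Y)+v(\tau,Y)$ with $v$ orthogonal in $L^2_\rho$ to $\pa_bQ_b$. Projecting the equation on $\pa_bQ_b$ determines $\dot b$ to leading order; a careful expansion of the scalar products $\la\calE_b,\pa_bQ_b\ra_{L^2_\rho}$ and $\|\pa_bQ_b\|_{L^2_\rho}^2$, both governed by the free boundary layer where $\rho$ is still of order one and $Q_b$ transitions to zero, produces the modulation ODE
$$\dot b=-c^*b^2(1+o(1)),\quad c^*=c^*(\Phi)>0,$$
so that $b(\tau)\sim 1/(c^*\tau)$, which is \eqref{movementboundary}. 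The remainder $v$ is decomposed on its stable and unstable projections with respect to $\L$: the $N$ unstable coefficients $a_{j,k}(\tau)$ satisfy ODEs $\dot a_{j,k}=-(\l_j+k)a_{j,k}+\cdots$ and are cancelled by a Brouwer type topological shooting on an $N$-parameter family of initial data, while the stable part is closed by weighted $H^1_\rho$ energy estimates exploiting the coercivity of $\L$ away from its unstable modes.

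\emph{Main obstacle.} The heart of the matter is the sharp extraction of the logarithmic law: the vector $\pa_bQ_b$ is not an exact eigenmode of $\L$, and the relevant scalar products are dominated by the free boundary region $|z|\sim 1/\sqrt b$ where $Q_b$ is non smooth, $\Phi$ is evaluated near infinity, and the Gaussian weight is just starting to cut. A matched inner/outer asymptotic expansion is needed to isolate the logarithmic enhancement and to compute the anomalous constant $c^*$, in the spirit of the Bricmont--Kupiainen / Merle--Zaag treatment of the ODE blow up \cite{BK,MZduke,MZgaffa}. A secondary but substantial difficulty is to control $v$ uniformly up to the moving free boundary in weighted norms, which requires semigroup estimates adapted to the anisotropic geometry of $Q_b$ and cannot be reduced to the purely radial analysis of \cite{CRS}.
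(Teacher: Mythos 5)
Your overall architecture does match the paper's: your identity $\mathcal{E}_b=\partial_{zz}Q_b$ is exactly the reconnection Lemma \ref{reconnectingprofiles}, the spectrum $\{\lambda_j+k\}$ with even Hermite polynomials is Lemma \ref{lemmaspectraltwo}, and the finitely many unstable modes are indeed removed by a Brouwer argument. But the central step — extracting $\dot b\simeq -c^*b^2$ — is wrong as you have set it up. First, with the Gaussian weight the relevant scalar products are \emph{not} dominated by the free boundary layer: at $|z|\sim b^{-1/2}$ one has $\rho\sim e^{-1/(4b)}$, so that region contributes only exponentially small terms, and everything is decided in the self-similar zone $|z|=O(1)$; there is no ``logarithmic enhancement'' to be extracted by inner/outer matching, and the constant is not anomalous. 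Second, in that zone $\mathcal{E}_b=\partial_z^2Q_b=-b\,\Lambda_r\Phi+O\!\left(b^2(1+z^2)\right)$ while $\partial_bQ_b=-\tfrac{z^2}{2}\Lambda_r\Phi+O(b)$, so $\left(\mathcal{E}_b,\partial_bQ_b\right)_{L^2_{\rho_Y}}$ is nonzero already at order $b$ (with positive sign) and $\|\partial_bQ_b\|_{L^2_{\rho_Y}}^2=O(1)$: your scheme, which modulates only $b$ and imposes orthogonality only to $\partial_bQ_b$, yields $\dot b\simeq cb$ with $c>0$, not $-c^*b^2$. The correct $b^2$ law requires, in addition, modulating the scaling (equivalently correcting the blow-up rate, $\frac{\lambda_s}{\lambda}+\frac12=M(b)=b+\dots$, which is the role of $d_1=1$), so that the $z$-independent ($P_0$) part of the $O(b)$ error is absorbed in the rate, and projecting the $b$-equation on the exact zero mode $(z^2-2)\Lambda_r\Phi=\phi_{-1,2}$ (Remark \ref{remark:trivialmemberkernel}); by Hermite orthogonality the $O(b)$ term then drops, $b_s=-c_1b^2+\dots$, and $c_1$ is computed in \eqref{calculloi} as a solvability condition in the Hilbert expansion (Lemma \ref{lemmaapproximate}, steps 4--5). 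The logarithm in \eqref{movementboundary} then comes solely from integrating $b_s\simeq -c_1b^2$ in the self-similar time $s\simeq|\log(T-t)|$, not from boundary-layer asymptotics.

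The second genuine gap is the control of the remainder. Coercivity of $\L$ in $H^1_{\rho_Y}$ cannot by itself close the nonlinear term: the Gaussian measure sees nothing at large $|Y|$, and near the moving boundary $Q_b$ is not a small perturbation of $\Phi$. You flag this and propose ``semigroup estimates adapted to the anisotropic geometry'', but leave it open — and this is where most of the paper's work lies. The paper needs (i) an arbitrarily high-order corrected profile $\Phit_b$ (Lemmas \ref{lemmaapproximate}, \ref{lemmaapproximatelocalized}) making the error $O(b^{n+1})$ inside the layer, and (ii) a chain of purely energetic estimates — exponentially weighted $H^2$ and $W^{1,2q+2}$ bounds, polynomially-in-$z$ weighted bounds, and a global unweighted $W^{1,2q+2}$ bound giving $\|v\|_{L^\infty}\ll1$ by Sobolev (Lemmas \ref{boundsexpo}--\ref{lemmehtwoweight}) — precisely to avoid resolvent/semigroup estimates, which are not available here since $\Phi$ is not explicit. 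Without a substitute for both ingredients the bootstrap does not close; as written, your sketch leaves these two analytic steps open and, in the modulation step, would produce the wrong law for $b$.
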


\noindent{\it Comments on the result}\\

\noindent{\it 1. Moving free boundary}. The main feature of Theorem \ref{thmmain} is to exhibit blow up solutions with strongly {\it anisotropic} blow up profiles. In particular the solution is almost constant in $z$ and equals the three dimensional self similar profile $\Phi(r)$ inside the boundary layer  $$|z(t)|\lesssim \sqrt{(T-t)|\log(T-t)|}$$ and the heart of the proof is to precisely compute the boundary. Note that the singularity still occurs at a point and not along the full $z$ axis. The free boundary is computed by constructing the reconnecting profile which generalizes the construction in \cite{BK,MZduke} for the ODE profile, and showing its stability. Note that in the companion paper \cite{CMRtypeII}, the transversal stability of type II blow up is proved and leads to a {\it completly different} behaviour of the free boundary.\\

\noindent{\it 2. On the spectral assumption \eqref{hypspectral}}. We expect the spectral assumption \eqref{hypspectral} to be generic. It would aslo typically be fulfilled for the minimizing self similar solution of the energy super critical heat flow (for which $\l_{-1}=-1$ is the bottom of the spectrum of $\L_r$).  In the setting of the construction of solutions by bifurcation \cite{BK, CRS}, this condition can be checked numerically, \cite{BK}. Let us stress that our analysis suggests that other integer eigenvalues generate new zeros of the full four dimensional linearized operator close to $\Phi(r)$, see Lemma \ref{lemmaspectralone}, and hence can generate new moving boundaries. Let us also stress that the speed of the moving boundary \eqref{movementboundary} is the {\it fundamental} mode, and that other speeds could be constructed corresponding to higher order excited modes.\\

\noindent{\it 3. More dimensional reductions}. More generally, one could address the following problem: consider the heat equation $$\pa_tu=\Delta u+u^p, \ \ x\in \Bbb R^{d_1}\times\Bbb R^{d_2} \ \ \mbox{with}\ \ \frac{d_1+2}{d_1-2}<p<p_{JL}(d_1)$$ with $p_{JL}$ given by \eqref{exponentpjl}, can one construct a self similar blow up dynamics emerging from finite energy initial which is to leading order constant in the direction $(x_{d_1+1},\dots,x_{d_1+d_2})$? \\
Theorem \ref{thmmain}  gives a positive answer for $d_2=1$, and we expect that it is the first step of an iteration argument. This would produce for a given nonlinearity finite energy self similar blow up solutions in arbitrarily large dimensions which is not known as of today. \\

\noindent{\it 4. $L^\infty$ bounds}. The main difficulty of the analysis is to control the perturbation in $L^{\infty}$ in order to deal with the nonlinear term. Here the computation of the free boundary and the construction of the {\it reconnecting} profiles, Lemma \ref{reconnectingprofiles}, is {\it essential}. Such estimates were derived for the ODE blow up problem in \cite{BK} using {\it explicit resolvent estimates} for the linearized flow near the constant self similar solution, and in \cite{MZgaffa} using general Liouville type classification theorem. These approaches are not obvious to implement here due to to the super critical nature of the problem, and the fact that there is no explicit formula for $\Phi$. We will overcome this using new elementary $W^{1,q}$ energy estimates, and a by product of our analysis is another self contained dynamical proof of the stability of the ODE type I blow up using purely energy estimates.\\

\subsection*{Notations}

We let $$Y=(y,z)\in \Bbb R^3\times \Bbb R, \ \ r=|y|$$ be the renormalized space variable. We let 
$$\Delta_r=\pa_r^2+\frac{2}{r}\pa_r, \ \ \Delta_Y=\Delta_r+\pa_z^2,$$ and the generator of scalings be $$\Lambda_r=\frac{2}{p-1}+r\pa_r, \ \ \Lambda_Y=\frac{2}{p-1}+Y\cdot\nabla.$$ We define the weights $$\rho_r=e^{-\frac{r^2}{4}}, \ \ \rho_Y=e^{-\frac{|Y|^2}{4}}, \ \ \rho_z=e^{-\frac{z^2}4}$$ with associated weighted norm $$\|u\|_{L^2_{\rho_r}}^2=\int_{\Bbb R^4}|u|^2\rho_r dY, \ \ \|u\|_{L^2_{\rho_Y}}^2=\int_{\Bbb R^4}|u(Y)|^2\rho_YdY.$$  We say a function $u(Y)$ has even cylindrical symmetry if $$u(Y)=u(r,z)=u(r,-z)$$ and denote $$L^{2,e}_{\rho_Y}$$ the associated Hilbert space. We let $\Phi(r)$ be a three dimensional self similar solution
\bea\label{eq:selfsimilarequationprofilPhi}
\Delta_r \Phi-\frac 12\Lambda_r \Phi+\Phi^p=0
\eea
satisfying \eqref{propertiesphi} as build in \cite{CRS}. We define for $m\in \Bbb N$ the $m$-th one dimensional Hermite polynomial 
\be
\label{hermitepolynomial}
P_m(z)=\sum_{k=0}^{[\frac m2]}\frac{m!}{k!(m-2k)!}(-1)^kz^{m-2k}
\ee
 which satisfy 
 $$\int_{\Bbb R} P_mP_{m'}\rho_zdz=\sqrt{\pi}2^{m+1} m!\delta_{mm'}.$$ 
 We let $$s_c=\frac32-\frac{2}{p-1}, \ \ S_c=2-\frac{2}{p-1},$$
where $s_c$ is the 3d critical exponent and $S_c$ is the 4d critical exponent. We let $$\la x\ra=\sqrt{1+|x|^2}.$$


\section{Approximate solution in the boundary layer}



\subsection{Reconnecting profiles}


Consider the renormalization $$u(t,x)=\frac{1}{\l(t)^{\frac{2}{p-1}}}U(s,Y), \ \ \frac{ds}{dt}=\frac{1}{\l^2}, \ \ Y=\frac{x}{\l(t)}$$ which maps \eqref{nlh} onto 
\be
\label{renormalizedflow}
\pa_sU=\Delta_YU+\lsl\Lambda_YU+U^p.
\ee 
For the self similar choice $$-\lsl=\frac 12,$$ an exact solution is given by $U(Y)=\Phi(r)$, but this solution does not decay along the $z$ direction. A better approximate solution decaying as $|Y|\to +\infty$ can be constructed by generalizing the approach in \cite{BK, MZduke}.

\begin{lemma}[Reconnecting profiles]
\label{reconnectingprofiles}
For all $b>0$, 
\be
\label{defphib}
\Phi_b(r,z)=\frac{1}{\mu_b(z)^{\frac2{p-1}}}\Phi\left(\frac{r}{\mu_b(z)}\right)\ \ \mbox{with}\ \ \mu_b(z)=\sqrt{1+bz^2}
\ee 
solves 
\be
\label{reconnectingequations}
\frac12z\pa_z\Phi_b=\Delta_r\Phi_b-\frac 12\Lambda_r\Phi_b+\Phi_b^p.
\ee
\end{lemma}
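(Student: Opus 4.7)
The proof I would write is a direct computation exploiting the scaling structure. Set $\mu = \mu_b(z) = \sqrt{1+bz^2}$ and $\rho = r/\mu$, so that $\Phi_b(r,z) = \mu^{-2/(p-1)}\Phi(\rho)$. The conceptual point is that, at fixed $z$, the map $\Phi \mapsto \mu^{-2/(p-1)}\Phi(\cdot/\mu)$ is the parabolic rescaling that commutes with $\Delta_r$ and the pure nonlinearity but breaks the self-similar equation because of the drift term $-\tfrac12 \Lambda_r$. The ansatz $\mu_b(z)=\sqrt{1+bz^2}$ is designed precisely so that the scaling residue, which is of the form $\frac{1}{2}(\mu^{-2}-1)\Lambda_\rho \Phi$, is compensated by $\frac12 z\partial_z\Phi_b$ on the left-hand side.

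First I would transform the spatial operators. Using $\partial_r\rho=\mu^{-1}$, one finds
\begin{equation*}
\Delta_r\Phi_b = \mu^{-\frac{2}{p-1}-2}\Delta_\rho\Phi(\rho), \qquad \Lambda_r\Phi_b = \mu^{-\frac{2}{p-1}}\Lambda_\rho\Phi(\rho), \qquad \Phi_b^{\,p} = \mu^{-\frac{2}{p-1}-2}\Phi^p(\rho),
\end{equation*}
where the last identity uses $\tfrac{2p}{p-1}=\tfrac{2}{p-1}+2$. Plugging these into the right-hand side of \eqref{reconnectingequations} and using the three dimensional self-similar equation \eqref{eq:selfsimilarequationprofilPhi} in the form $\Delta_\rho\Phi + \Phi^p = \tfrac12\Lambda_\rho\Phi$, the terms $\Delta_\rho\Phi$ and $\Phi^p$ combine and yield
\begin{equation*}
\Delta_r\Phi_b - \tfrac12\Lambda_r\Phi_b + \Phi_b^{\,p}
= \tfrac12 \mu^{-\frac{2}{p-1}-2}\bigl(1-\mu^2\bigr)\Lambda_\rho\Phi(\rho)
= -\tfrac{b z^2}{2}\,\mu^{-\frac{2}{p-1}-2}\Lambda_\rho\Phi(\rho).
\end{equation*}

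Second I would compute $z\partial_z\Phi_b$. Differentiating $\mu$ in $z$ gives $z\mu_z = bz^2/\mu = (\mu^2-1)/\mu$, and differentiating $\rho = r/\mu$ gives $\partial_z\rho = -\rho\,\mu_z/\mu$. Combining these,
\begin{equation*}
\partial_z\Phi_b = -\mu^{-\frac{2}{p-1}-1}\mu_z\left[\tfrac{2}{p-1}\Phi(\rho)+\rho\Phi'(\rho)\right] = -\mu^{-\frac{2}{p-1}-1}\mu_z\,\Lambda_\rho\Phi(\rho),
\end{equation*}
hence
\begin{equation*}
\tfrac12 z\partial_z\Phi_b = -\tfrac12 \mu^{-\frac{2}{p-1}-1}(z\mu_z)\Lambda_\rho\Phi(\rho) = -\tfrac{bz^2}{2}\,\mu^{-\frac{2}{p-1}-2}\Lambda_\rho\Phi(\rho),
\end{equation*}
which matches the expression obtained above. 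This proves \eqref{reconnectingequations}.

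There is no real obstacle — the result is a verification — but I would flag the structural observation that the ansatz is forced: requiring a profile of the form $\Phi_b(r,z)=\mu(z)^{-2/(p-1)}\Phi(r/\mu(z))$ to satisfy \eqref{reconnectingequations} imposes $z\mu_z = (\mu^2-1)/\mu$, equivalently $(\mu^2)_z = 2bz$ for some constant $b$, with the normalization $\mu(0)=1$. This is exactly $\mu^2 = 1+bz^2$, which both explains the choice and is the cylindrical analogue of the one-parameter family of reconnecting profiles found in \cite{BK,MZduke} for the ODE blow up.
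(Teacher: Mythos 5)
Your computation is correct and coincides with the paper's own proof: both sides of \eqref{reconnectingequations} are evaluated explicitly and reduce, via the self-similar equation \eqref{eq:selfsimilarequationprofilPhi}, to $-\tfrac{bz^2}{2}\mu_b^{-\frac{2}{p-1}-2}\Lambda_r\Phi(r/\mu_b)$. The closing remark on how the ansatz $\mu_b^2=1+bz^2$ is forced is a nice observation but not part of the argument itself.
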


\begin{proof} 
On the one hand, we have
\bee
\frac{1}{2}z\pa_z\Phi_b &=& -\frac{1}{2}\frac{bz^2}{\mu_b^{\frac{2}{p-1}+2}}\Lambda_r\Phi\left(\frac{r}{\mu_b}\right),
\eee
and on the other hand, we have
\bee
&&\Delta_r\Phi_b-\frac 12\Lambda_r\Phi_b+\Phi_b^p = \frac{1}{\mu_b^{\frac{2}{p-1}+2}}\left(\Delta_r\Phi -\frac{\mu_b^2}{2}\Lambda_r\Phi+\Phi^p\right)\left(\frac{r}{\mu_b}\right)\\
&=& \frac{1}{\mu_b^{\frac{2}{p-1}+2}}\frac{1}{2}(1-\mu_b^2)\Lambda_r\Phi\left(\frac{r}{\mu_b}\right)= -\frac{1}{2}\frac{bz^2}{\mu_b^{\frac{2}{p-1}+2}}\Lambda_r\Phi\left(\frac{r}{\mu_b}\right),
\eee
where we used \eqref{eq:selfsimilarequationprofilPhi}. This concludes the proof of the lemma.
\end{proof}


\subsection{Diagonalization of the linearized operator close to $\Phi$}
\label{sectiondiag}

Let the $4$ dimensional linearized operator close to $\Phi$: $$\L_Y=-\Delta_Y +\frac 12\Lambda_Y-p\Phi^{p-1},$$ then $\L_Y$ is self adjoint on a domain $\mathcal D(\L_Y)\subset L^2_{\rho_Y}(\Bbb R^4)$ and with compact resolvent. Let the $3$ dimensional radial operator $$ \L_r=-\Delta_r+\frac 12\Lambda_r-p\Phi^{p-1}$$ which is self adjoint on a domain $\mathcal D(\L_r)\subset L^2(r^2\rho_rdr)$ with compact resolvent and spectrum determined in \cite{CRS}: 

\begin{lemma}[Spectrum for $\L_r$ in weighted spaces, \cite{CRS}]\label{lemma:spectrumoftheoperatormathcalLr} 
The spectrum of $\L_r$ with domain $\mathcal D(\L_r)\subset L^2(r^2\rho_rdr)$ is given by 
$$
\lambda_{-\ell_0}<\dots<\lambda_{-1}=-1<0<\l_{0}<\l_{1}<\dots
$$
for some integer $\ell_0\geq 1$ with 
\be
\label{spectralgapeigenvalue}
\l_{j}>0\textrm{ for all }j\geq 0 \textrm{ and }\lim_{j\to+\infty}\l_{j}=+\infty.
\ee
\item The eigenvalues $(\lambda_{j})_{-\ell_0\leq j\leq -1}$ are simple and associated to spherically symmetric eigenvectors 
\be
\label{formulapsiminustow}
\psi_{-j}(r), \ \ \|\psi_{-j}\|_{L^2(r^2\rho_rdr)}=1, \ \ \psi_{-1}=\frac{\Lambda_r \Phi}{\|\Lambda_r \Phi\|_{L^2(r^2\rho_rdr)}}.
\ee
Moreover, there holds the bound as $r=|y|\to +\infty$
\be
\label{estgrowth}
|\pa_k\psi_{j}(r)|\lesssim (1+r)^{-\frac{2}{p-1}-\lambda_{j}-k}, \ \ -\ell_0\leq j\leq -1, \ \ k\geq 0.
\ee
\end{lemma}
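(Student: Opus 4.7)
The approach is to treat $\L_r$ as a self-adjoint Sturm-Liouville operator on $L^2(r^2\rho_r dr)$ via its quadratic form, extract discreteness from the Gaussian weight, and identify the distinguished eigenpair by a direct scaling computation.

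First I would rewrite $\L_r$ in divergence form: with the weight $W(r) = r^2\rho_r$ one computes
\begin{equation*}
\L_r u = -W^{-1}\partial_r(W\partial_r u) + V u, \qquad V(r) = \tfrac{1}{p-1} - p\Phi^{p-1}(r),
\end{equation*}
so that the associated quadratic form $q(u) = \int_0^\infty(|\partial_r u|^2 + V|u|^2) W\, dr$ is symmetric and bounded below (since $V$ is bounded). Its closure on the natural weighted Sobolev space $H^1_W$ yields, via Friedrichs, a self-adjoint realization of $\L_r$. The key compactness input is the compact embedding $H^1_W \hookrightarrow L^2_W$: the Gaussian weight forces decay at infinity, so bounded $H^1_W$-sequences have $L^2_W$-precompact tails, while Rellich handles the bounded balls. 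Equivalently, conjugation by $r\rho_r^{1/2}$ turns $\L_r$ into a 1D Schr\"odinger operator with confining harmonic-oscillator type potential, whose spectrum is classical. This yields discrete spectrum bounded below and accumulating only at $+\infty$; the finiteness $\ell_0$ of the negative eigenvalues follows from the lower bound on $q$.

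Next I would identify $\Lambda_r\Phi$ as an eigenfunction with eigenvalue $-1$. The elementary commutator $[\Lambda_r,\Delta_r] = -2\Delta_r$ together with $\Lambda_r(\Phi^p) = p\Phi^{p-1}\Lambda_r\Phi - 2\Phi^p$ give, after applying $\Lambda_r$ to \fref{eq:selfsimilarequationprofilPhi} and using \fref{eq:selfsimilarequationprofilPhi} once more to re-express $\Delta_r\Phi + \Phi^p = \tfrac{1}{2}\Lambda_r\Phi$, precisely $\L_r(\Lambda_r\Phi) = -\Lambda_r\Phi$; since $\Phi$ decays at infinity it is not scale invariant, so $\Lambda_r\Phi\not\equiv 0$. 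Simplicity of every eigenvalue is automatic from the radial ODE structure: $\L_r\psi = \lambda\psi$ is a linear second order ODE regular at $r=0$ with a one-dimensional space of regular solutions there, so each eigenspace is at most one-dimensional. The precise labeling $\lambda_{-1}=-1$, namely the absence of eigenvalues in $(-1,0)$, rests on the specific construction of $\Phi$ in \cite{CRS} as a bifurcation branch from the singular constant solution together with a Morse/intersection count, which I would import from \cite{CRS} rather than redo.

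For the decay estimate \fref{estgrowth}, I would perform an ODE analysis at infinity for $\L_r\psi = \lambda\psi$. Since $p\Phi^{p-1}\sim r^{-2}$ and $-\psi''$, $\tfrac{2}{r}\psi'$ are subdominant compared to $\tfrac{r}{2}\psi'$, the leading balance $\tfrac{r}{2}\psi' + (\tfrac{1}{p-1} - \lambda)\psi \approx 0$ produces, after a standard contraction / integrating-factor argument on the full second order ODE, two independent branches at infinity: a polynomial one $\sim r^{2\lambda - 2/(p-1)}$ and a super-exponential one $\sim e^{r^2/4}r^{\cdots}$. Membership of $\psi_j$ in $L^2_W$ rules out the second branch, so $|\psi_j(r)|\lesssim r^{2\lambda_j - 2/(p-1)}$ as $r \to \infty$; since $\lambda_j \leq 0$ for $j\leq -1$ we have $2\lambda_j \leq -\lambda_j$, and the stated (looser) bound $|\psi_j|\lesssim \langle r\rangle^{-2/(p-1)-\lambda_j}$ follows. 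Derivatives are handled by differentiating the ODE $k$ times and iterating, each derivative lowering the polynomial power by one. I expect the main technical obstacle to be rigorously isolating the polynomial branch from the exponential one, which is handled by a Banach fixed point on integral equations once the ODE at infinity is decoupled.
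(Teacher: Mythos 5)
The paper gives no internal proof of this lemma: it is imported verbatim from \cite{CRS} (as the attribution in the statement indicates), so there is nothing to compare line by line; the relevant question is whether your reconstruction is sound. It essentially is. The divergence form with weight $W=r^2\rho_r$, the Friedrichs realization of the form, and the compactness of $H^1_W\hookrightarrow L^2_W$ coming from the Gaussian confinement do produce a self-adjoint operator with compact resolvent, hence discrete spectrum bounded below and accumulating only at $+\infty$ with finitely many negative eigenvalues; the commutator identity $[\Lambda_r,\Delta_r]=-2\Delta_r$ combined with \eqref{eq:selfsimilarequationprofilPhi} does give $\L_r(\Lambda_r\Phi)=-\Lambda_r\Phi$ with $\Lambda_r\Phi\not\equiv 0$; and the ODE analysis at infinity correctly isolates the two branches $r^{2\lambda-\frac{2}{p-1}}$ and $e^{\frac{r^2}{4}}r^{\cdots}$ (consistent with the asymptotics of $\varphi_{1,j},\varphi_{2,j}$ recalled in Appendix \ref{sec:proofinversionoftheellipticsystemdefiningVij}), so that $L^2(r^2\rho_r dr)$-membership kills the growing branch and yields $|\psi_j|\lesssim r^{2\lambda_j-\frac{2}{p-1}}\leq r^{-\lambda_j-\frac{2}{p-1}}$ for $\lambda_j\leq 0$, which is \eqref{estgrowth}. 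Two caveats. First, the genuinely profile-dependent content -- that $-1$ is the \emph{largest} negative eigenvalue and that $0$ is not in the spectrum -- you import from \cite{CRS}; since the paper imports the entire lemma, this is acceptable, but be aware it is exactly the part that no soft argument above can deliver. Second, your simplicity argument needs a refinement: near $r=0$ the singular branch behaves like $r^{-1}$, which \emph{does} lie in $L^2(r^2\rho_r dr)$, so mere membership in the Hilbert space does not discard it; what excludes it is membership in the form domain $H^1_W$ (its derivative $\sim r^{-2}$ is not square integrable against $r^2dr$), i.e. precisely the Friedrichs realization you set up, equivalently the fact that $\mathcal D(\L_r)$ is the radial restriction of the domain of the genuinely three dimensional operator. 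With that adjustment each eigenspace is one dimensional as claimed.
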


We may now diagonalize the full operator $\L_Y$ for function with cylindrical symmetry using a standard separation of variables claim and the tensorial structure of $\L_Y$.

\begin{lemma}[Spectrum for $\L_Y$ in weighted spaces with cylindrical symmetry] 
\label{lemmaspectralone}
The spectrum of $\L_Y$ restricted to functions of cylindrical symmetry with domain $\mathcal D(\L_Y)\subset L^2_{\rho_Y}(\Bbb R^4)$ is given by 
$$
\mu_{j,m}=\lambda_j+\frac{m}{2}, \ \ j\in [-\ell_0,+\infty), \ \ m\in \Bbb N
$$
with eigenfunction 
\be
\label{cnecneoneo}
\phi_{j,m}(Y)=\psi_{j}(r)P_m(z)
\ee where $P_m(z)$ is the $m$-th one dimensional  Hermite polynomial \eqref{hermitepolynomial} and $\psi_j$ denote the eigenvectors of $\L_r$. In particular, for $-\ell_0\leq j\leq -1$, let $m(j)$ be the smallest integer such that $$\frac{m(j)+1}{2}+\l_j>0,$$ then there holds the spectral gap estimate: $\forall \e\in H^1_{\rho_Y}$, 
$$
(\L_Y\e,\e)_{L^2_{\rho_Y}}\geq c\|\e\|_{H^1_{\rho_Y}}^2-\sum_{j=-\ell_0}^{-1}\sum_{m=0}^{m(j)}(\e,\phi_{j,m})^2_{L^2_{\rho_Y}}
$$
for some universal constant $c>0$.
\end{lemma}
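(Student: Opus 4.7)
The plan is to exploit the tensor-product structure of $\mathcal{L}_Y$. Since $\Phi^{p-1}$ depends only on $r$, placing the constant $\tfrac{2}{p-1}$ entirely inside $\Lambda_r$ I write
$$\mathcal L_Y = \mathcal L_r\otimes \mathrm{Id}_z + \mathrm{Id}_r\otimes \mathcal L_z,\qquad \mathcal L_z:=-\pa_z^2+\tfrac{z}{2}\pa_z,$$
on the cylindrically symmetric subspace which factorises as $L^2_{\rho_Y}\simeq L^2(r^2\rho_r\,dr)\otimes L^2(\rho_z\,dz)$. A one-line computation on the Hermite polynomials shows $\mathcal L_z P_m = \tfrac{m}{2}P_m$, which combined with Lemma~\ref{lemma:spectrumoftheoperatormathcalLr} and the standard tensor-product diagonalisation for commuting self-adjoint operators with compact resolvent produces the complete orthogonal eigenbasis $\{\phi_{j,m}=\psi_j(r)P_m(z)\}$ with eigenvalues $\mu_{j,m}=\lambda_j+m/2$. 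This settles \eqref{cnecneoneo} and the first half of the lemma.

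For the spectral gap, I first integrate by parts against the weight $\rho_Y$, using $\nabla\rho_Y=-(Y/2)\rho_Y$: the drift cross-terms coming from $-\Delta_Y$ and $\tfrac12\Lambda_Y$ cancel exactly, leaving the coercive identity
$$(\mathcal L_Y\e,\e)_{L^2_{\rho_Y}} = \|\nabla\e\|_{L^2_{\rho_Y}}^2 + \frac{1}{p-1}\|\e\|_{L^2_{\rho_Y}}^2 - p\int\Phi^{p-1}\e^2\rho_Y,\qquad \e\in H^1_{\rho_Y}.$$
Let $\Pi$ be the $L^2_{\rho_Y}$-orthogonal projector onto the finite-dimensional span of $\{\phi_{j,m}:-\ell_0\le j\le -1,\ 0\le m\le m(j)\}$. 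By the very choice of $m(j)$, every remaining eigenvalue obeys $\mu_{j,m}\geq \delta$ with $\delta:=\min\{\lambda_0,\min_{-\ell_0\le j\le -1}(\lambda_j+(m(j)+1)/2)\}>0$, so the spectral theorem on the orthogonal complement yields the $L^2$ gap
$$(\mathcal L_Y(I-\Pi)\e,(I-\Pi)\e)_{L^2_{\rho_Y}}\geq \delta\|(I-\Pi)\e\|^2_{L^2_{\rho_Y}}.$$

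Upgrading to $H^1_{\rho_Y}$ is done by convex interpolation. I split $(\mathcal L_Y(I-\Pi)\e,(I-\Pi)\e)=\eta(\mathcal L_Y(I-\Pi)\e,(I-\Pi)\e)+(1-\eta)(\mathcal L_Y(I-\Pi)\e,(I-\Pi)\e)$, use the identity above on the first copy and the $L^2$ gap on the second, and invoke $\Phi\in L^\infty$ (from \eqref{propertiesphi}) to absorb $-\eta p\int\Phi^{p-1}(I-\Pi)\e^2\rho_Y$ as soon as $(1-\eta)\delta+\tfrac{\eta}{p-1}>\eta p\|\Phi^{p-1}\|_\infty$, i.e.\ for $\eta>0$ small. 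This gives $(\mathcal L_Y(I-\Pi)\e,(I-\Pi)\e)\gtrsim\|(I-\Pi)\e\|_{H^1_{\rho_Y}}^2$, and since each low eigenfunction $\phi_{j,m}$ is smooth with finite $H^1_{\rho_Y}$ norm, $\|\Pi\e\|_{H^1_{\rho_Y}}^2\lesssim\sum(\e,\phi_{j,m})^2_{L^2_{\rho_Y}}$, which assembled with the previous bound closes the claim. The crucial step, and essentially the only point where anything could go wrong, is this $L^2$-to-$H^1$ upgrade: it succeeds here purely because the drift cancellation reduces $(\mathcal L_Y\e,\e)$ to the Dirichlet energy plus a bounded multiplication operator, making $-p\Phi^{p-1}$ a benign compact perturbation; the rest is bookkeeping.
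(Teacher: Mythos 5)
Your proposal is correct and follows essentially the same route as the paper: separation of variables using the tensor structure $\L_Y=\L_r\otimes\mathrm{Id}+\mathrm{Id}\otimes(-\pa_z^2+\tfrac z2\pa_z)$ together with the Hermite diagonalization of the $z$-part gives the Hilbertian eigenbasis $\phi_{j,m}=\psi_j(r)P_m(z)$ with $\mu_{j,m}=\lambda_j+\tfrac m2$, and the gap then comes from decomposing in this basis. The only difference is that you spell out what the paper leaves implicit, namely the $L^2$-to-$H^1$ upgrade via the quadratic-form identity $(\L_Y\e,\e)_{L^2_{\rho_Y}}=\|\nabla\e\|_{L^2_{\rho_Y}}^2+\tfrac1{p-1}\|\e\|_{L^2_{\rho_Y}}^2-p\int\Phi^{p-1}\e^2\rho_Y$ and absorption of the bounded potential, which is the standard (and correct) way to complete that step.
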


\begin{remark}\label{remark:trivialmemberkernel} 
In particular $\mu_{-1,2}=-1+\frac{2}{2}=0$, and hence there is always a zero eigenmode. In view of \eqref{cnecneoneo} for $j=-1$ and $m=2$, formula \eqref{formulapsiminustow} for $\psi_{-1}$ and formula \eqref{hermitepolynomial} for $P_2$, the corresponding eigenvector is given by
$$(z^2-2)\Lambda \Phi.$$ 
\end{remark}

\begin{proof} This is a standard claim based on separation of variables. We compute $$\L_Y(\psi(r)P_m(z))=P_m(z)\left[-\Delta_r +\frac 12\Lambda_r-p\Phi^{p-1}\right]\psi(r)+\psi(r)\left[-\pa_{zz} +\frac 12z\pa_z\right]P_m(z)$$ and hence for an eigenfunction $\L_r\psi_j=\lambda_j\psi_j$:
\bee
\L_Y(\psi_j(r)P_m(z)) &=& \psi_j(r)\left[-\pa_{zz} +\frac 12z\pa_z+\lambda_j\right]P_m(z)\\
&=& \psi_j(r)\left[\frac{m}{2}+\lambda_j\right]P_m(z),
\eee
where we used the fact that the one dimensional harmonic oscillator $$-\pa^2_z+\frac 12z\pa_z$$ has spectrum $\frac{m}{2}$, $m\in \Bbb N$ on $L^2_{\rho_z}$ with eigenfunctions given by the $m$-th Hermite polynomial $P_m(z)$. It remains to observe that $\psi_j(r)P_m(z)$ is a dense family of the cylindrically symmetric functions of $L^2_{\rho_Y}(\Bbb R^4)$ from standard tensorial claims to conclude that it forms a Hilbertian basis of eigenvectors. The spectral gap estimate \eqref{spectralgapestimate} then follows by decomposition of the self adjoint operator $\L_Y$ in the Hibertian basis $\phi_{j,m}$.
\end{proof}

Under the additional assumption of even cylindrical symmetry and the fact that $P_{2m}$ is an even polynomial while $P_{2m+1}$ is an odd polynomial for all $m\in\mathbb{N}$ from \eqref{hermitepolynomial}, we obtain as a direct consequence of Lemma \ref{lemmaspectralone}:

\begin{lemma}[Spectrum for $\L_Y$ in weighted spaces with even cylindrical symmetry] 
\label{lemmaspectraltwo}
The spectrum of $\L_Y$ with domain $\mathcal D(\L_Y)\subset L^{2,e}_{\rho_Y}(\Bbb R^4)$ is given by 
\be
\label{eignevalurenvon}
\mu_{j,2M}=\lambda_j+M, \ \ j\in [-\ell_0,+\infty), \ \ M\in \Bbb N
\ee
with eigenfunction $$\phi_{j,2M}(Y)=\psi_{j}(y)P_{2M}(z)$$ where $P_m(z)$ is the $m$-th one dimensional  Hermite polynomial \eqref{hermitepolynomial}. In particular, for $-\ell_0\leq j\leq -1$, let $M(j)$ be the smallest integer such that $$M(j)+1+\l_j>0,$$ then there holds the spectral gap estimate: $\forall \e\in H^{1,e}_{\rho_Y}$,
\be
\label{spectralgapestimate}
(\L\e,\e)_{L^2_{\rho_Y}}\geq c\|\e\|_{H^1_{\rho_Y}}^2-\frac 1c\sum_{j=-\ell_0}^{-1}\sum_{M=0}^{M(j)}(\e,\phi_{j,2M})^2_{L^2_{\rho_Y}}
\ee
for some universal constant $c>0$.
\end{lemma}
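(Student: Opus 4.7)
The plan is to obtain Lemma \ref{lemmaspectraltwo} as a direct restriction of Lemma \ref{lemmaspectralone} through the $z$-parity. The key observation is that from formula \eqref{hermitepolynomial} every monomial in $P_m$ has exponent of the same parity as $m$, so $P_m(-z) = (-1)^m P_m(z)$. Consequently, the eigenvector $\phi_{j,m}(Y) = \psi_j(r)P_m(z)$ supplied by Lemma \ref{lemmaspectralone} belongs to the even subspace $L^{2,e}_{\rho_Y}$ precisely when $m$ is even.

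First I would check that $\mathcal L_Y$ preserves the orthogonal splitting of $L^2_{\rho_Y}$ into $z$-even and $z$-odd parts: the operator involves $z$ only through $\partial_{zz}$ and $z\partial_z$, both of which commute with $z\mapsto -z$. Hence the Hilbertian eigenbasis $\{\phi_{j,m}\}$ furnished by Lemma \ref{lemmaspectralone} restricts to a Hilbertian eigenbasis of $\mathcal L_Y|_{L^{2,e}_{\rho_Y}}$ by keeping only the indices $m = 2M$, $M\in\mathbb N$. The eigenvalue formula $\mu_{j,m} = \lambda_j + m/2$ then collapses to $\mu_{j,2M} = \lambda_j + M$, which is \eqref{eignevalurenvon}.

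To derive the gap estimate \eqref{spectralgapestimate} I would expand $\epsilon \in H^{1,e}_{\rho_Y}$ in the restricted basis and mimic the argument used for Lemma \ref{lemmaspectralone}. By the very definition of $M(j)$ as the smallest integer satisfying $\lambda_j + M(j)+1 > 0$, the nonpositive eigenvalues in the even restriction are exactly $\{\mu_{j,2M}: -\ell_0 \le j \le -1,\, 0\le M \le M(j)\}$: for $M\leq M(j)$ one has $\lambda_j + M \leq 0$ since $M(j)-1$ (when nonnegative) fails the defining inequality, whereas for $M\geq M(j)+1$ one has $\lambda_j + M \geq \lambda_j + M(j)+1 >0$. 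After projecting out those finitely many modes, the remaining spectrum admits the uniform lower bound
$$\delta := \min\Bigl(\lambda_0,\; \min_{-\ell_0\le j\le -1}(\lambda_j + M(j)+1)\Bigr) > 0,$$
which gives $(\mathcal L_Y \epsilon,\epsilon)_{L^2_{\rho_Y}} \geq \delta \|\epsilon\|_{L^2_{\rho_Y}}^2$ on the orthogonal complement of the projected modes.

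The last step, upgrading this $L^2_{\rho_Y}$ coercivity into the full $H^1_{\rho_Y}$ control displayed in \eqref{spectralgapestimate}, is the only mildly technical point, but it is standard. Integration by parts against the Gaussian weight $\rho_Y$ gives the identity
$$(\mathcal L_Y \epsilon,\epsilon)_{L^2_{\rho_Y}} \;=\; \|\nabla \epsilon\|_{L^2_{\rho_Y}}^2 + \tfrac{1}{p-1}\|\epsilon\|_{L^2_{\rho_Y}}^2 - p\int \Phi^{p-1}|\epsilon|^2 \rho_Y\, dY.$$
Since $p\Phi^{p-1}$ is a bounded potential by \eqref{propertiesphi}, the last term is controlled by $C\|\epsilon\|_{L^2_{\rho_Y}}^2$, and plugging the $L^2_{\rho_Y}$ spectral gap into this identity yields simultaneously a positive lower bound on $(\mathcal L_Y\epsilon,\epsilon)_{L^2_{\rho_Y}}$ and a bound on $\|\nabla\epsilon\|_{L^2_{\rho_Y}}^2 + \|\epsilon\|_{L^2_{\rho_Y}}^2$ modulo the projection loss $\sum_{j,M}(\epsilon,\phi_{j,2M})^2_{L^2_{\rho_Y}}$, which is \eqref{spectralgapestimate}.
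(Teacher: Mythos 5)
Your proposal is correct and takes essentially the same route as the paper: Lemma \ref{lemmaspectraltwo} is obtained by restricting the separation-of-variables eigenbasis of Lemma \ref{lemmaspectralone} to the $z$-even sector via the parity of the Hermite polynomials, and the gap estimate follows from the decomposition of $\L_Y$ in this Hilbertian basis. Your explicit identification of the nonpositive even modes through the definition of $M(j)$ and the upgrade from $L^2_{\rho_Y}$ to $H^1_{\rho_Y}$ coercivity via the quadratic-form identity with the bounded potential $p\Phi^{p-1}$ are exactly the (standard) steps the paper leaves implicit.
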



\subsection{The high order approximate solution in the boundary layer}


Let us consider again the renormalized flow \eqref{renormalizedflow}. The choice $$\left(\lsl=-\frac 12, U(Y)=\Phi_b(Y),b(s)=b>0\right)$$  yields an $O(b)$ approximate solution in the boundary layer $|z|\lesssim\frac{1}{\sqrt{b}}$. We aim at improving this error and construct a high order approximate solution for $|z|\ll\frac{1}{\sqrt{b}}$, which will be the key to the control of the flow in $L^\infty$.\\

Let us indeed pick a smooth mapping $s\mapsto b(s)$ with $0<b(s)\ll1$ and look for a solution to \eqref{renormalizedflow} of the form 
$$U(s,Y)=\Phi_{b(s)}(Y)+v(s,Y)$$ 
which together with \eqref{reconnectingequations} yields:
\be
\label{vequation}
\pa_sv+\L_Y v=\pa^2_z\Phi_b-\pa_s\Phi_b+\left(\lsl+\frac 12\right)(\Lambda_Y \Phi_b+\Lambda_Y v)+F(v)
\ee
where
\bea\label{defnl}
F(v)=(\Phi_b+v)^p-\Phi_b^p-p\Phi_b^{p-1}v+p(\Phi_b^{p-1}-\Phi^{p-1})v.
\eea
We shall solve an approximate version of \eqref{vequation}. First let $$Z=\sqrt{b}z$$ and $$\Phi_b(r,z)=G(r,Z), \ \ G(r,Z)=\frac{1}{\mu(Z)^{\frac{2}{p-1}}}\Phi\left(\frac{r}{\mu(Z)}\right), \ \ \mu(Z)=\sqrt{1+Z^2}.$$ In order to construct an approximate solution, we anticipate the laws 
\be
\label{anticiaptionmod}
b_s=-bB(b), \ \ \lsl+\frac 12=M(b)
\ee and look for a solution of the form $$v_{b(s)}(s,r,z)=V_{b(s)}(r,Z)$$ so that $$\pa_sv=-B(b)\left[b\pa_b+\frac 1{2}Z\pa_Z\right]V, \ \  \pa_z^2v=b\pa_Z^2V, \ \ z\pa_z v=Z\pa_ZV$$ 
and \eqref{vequation} becomes:
\bee
\left(\L_r+\frac 12Z\pa_Z\right) V&=& b\pa^2_{Z}(G+V)+B(b)\left(\frac 1{2}Z\pa_ZG+\frac 12Z\pa_ZV+b\pa_bV\right)\\
&+& M(b)(\Lambda_r+Z\pa_Z)(G+ V)+ \widetilde{F}(V),
\eee 
where $\widetilde{F}(V)$ is defined by
\bee
\widetilde{F}(V)=(G+V)^p-G^p-pG^{p-1}V+p(G^{p-1}-\Phi^{p-1})V.
\eee
Given $0<\delta\ll1$, we let: 
\be
\label{defomegeadelta}
\Omega_{\delta}=\{ |Z|\leq \delta\},
\ee 
and construct an arbitrarily high order approximate solution in $\Omega_{\delta}$ using an elementary Hilbert expansion.

\begin{lemma}[High order approximate solution]
\label{lemmaapproximate}
Let $n\in \Bbb N^*$ such that $n\geq p$. Then for all $0<\delta<\delta(n)
\ll1 $ and $0<b<b(n)\ll 1$ small enough, there exist 
\bea
\label{definitionv}
V_b(r,Z)=\sum_{i=1}^n\sum_{j=0}^nb^iV_{i,j}(r)Z^{2j}, \ \ B(b)=\sum_{i=1}^nc_ib^i, \ \ M(b)=\sum_{i=1}^nd_ib^i
\eea 
where  
\be
\label{decayv}
|\pa_r^kV_{i,j}|\lesssim_{n,k} \frac{1}{\la r\ra^{\frac{2}{p-1}-\frac{1}{n}+k}}, \ \ k\in \Bbb N
\ee 
such that 
\be\label{orthogonality}
(V_{i,0},\Lambda_r\Phi)_{L^2_{\rho_r}}=(V_{i,1},\Lambda_r\Phi)_{L^2_{\rho_r}}=0,\,\,\,\, 1\leq i\leq n,
\ee 
and
\bea
\label{defeorror}
&&\Psi_b=\left(\L_r+\frac 12 Z\pa_Z\right) V_b-b\pa^2_{Z}(G+V_b)- F(V_b)\\
&\nonumber - &B(b)\left(\frac 1{2}Z\pa_Z(G+V_b)+b\pa_bV_b\right)-M(b)(\Lambda_r+Z\pa_Z)(G+ V_b)
\eea
satisfies  
\be
\label{esterrorinomega}
\forall Z\in \Omega_{\delta}, \ \ |\pa_r^j\pa_Z^k \Psi_b|\lesssim_{n} \frac{b^{n+1}+b|Z|^{2n+2-k}}{\la r\ra^{\frac{2}{p-1}-\frac{1}{n}+j}},\ \ \ 0\leq j+k\leq 2.
\ee
Moreover, there holds for the first terms:
\be
\label{degeneracy}
V_{1,0}=0
\ee
and 
\be
\label{calculloi}
c_1=2(2-s_c)+\frac{\|r\Lambda \Phi\|_{L^2_{\rho_Y}}^2}{2\|\Lambda \Phi\|_{L^2_{\rho_Y}}^2}, \ \ d_1=1.
\ee
\end{lemma}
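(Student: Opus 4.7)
The plan is a Hilbert expansion in the two small parameters $b$ and $Z^2$. I substitute the polynomial ansatz \eqref{definitionv} for $V_b$ and the polynomial ans\"atze for $B(b)$, $M(b)$ into the right-hand side of \eqref{defeorror}, use \eqref{reconnectingequations} to cancel the leading contribution of $G$, and Taylor-expand both $G(r,Z)=\sum_{j\geq 0} G_j(r)Z^{2j}$ and the nonlinearity $\widetilde{F}(V_b)$ in powers of $Z^2$. Using $\frac12 Z\pa_Z(Z^{2j})=jZ^{2j}$, matching coefficients of $b^iZ^{2j}$ to zero for $1\leq i\leq n$ and $0\leq j\leq n$ produces a triangular system
\[
(\L_r+j)V_{i,j}=S_{i,j}\bigl[c_1,\ldots,c_i;\,d_1,\ldots,d_i;\,\{V_{k,\ell}\}_{(k,\ell)\prec(i,j)}\bigr].
\]

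I resolve this inductively, first in $i$ then in $j$. By the spectral assumption \eqref{hypspectral} combined with Lemma \ref{lemma:spectrumoftheoperatormathcalLr}, the only nonpositive integer eigenvalue of $\L_r$ is $-1$, with one-dimensional eigenspace spanned by $\Lambda_r\Phi$. Hence $\L_r+j$ is boundedly invertible on $L^2_{\rho_r}$ for $j=0$ and for $j\geq 2$, while at $j=1$ it has kernel $\la\Lambda_r\Phi\ra$. At each level $i$, the two free parameters $c_i,d_i$ are fixed as the unique solution of the $2\times 2$ linear system obtained by imposing the solvability condition $(S_{i,1},\Lambda_r\Phi)_{L^2_{\rho_r}}=0$ together with the gauge condition $(V_{i,0},\Lambda_r\Phi)_{L^2_{\rho_r}}=0$; the remaining component of $V_{i,1}$ along $\Lambda_r\Phi$ is killed by the same gauge. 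Explicit computation at $i=1$ gives $\pa_Z^2G|_{Z=0}=-\Lambda_r\Phi$, whence the $(i,j)=(1,0)$ equation reduces to $\L_rV_{1,0}=(d_1-1)\Lambda_r\Phi$; since $\L_r(\Lambda_r\Phi)=-\Lambda_r\Phi$, orthogonality forces $d_1=1$ and hence $V_{1,0}=0$, which is \eqref{degeneracy}. The solvability relation at $(1,1)$ is then a single scalar equation for $c_1$; pairing with $\Lambda_r\Phi$ and using integration by parts against the weight $\rho_r\,r^2\,dr$ yields \eqref{calculloi}. Non-degeneracy of the $2\times 2$ system at each subsequent $i$ reduces to non-degeneracy at $i=1$, which is checked by the same computation.

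The decay bound \eqref{decayv} propagates through the recursion because each inversion of $(\L_r+j)$ preserves polynomial decay (the transport part $\frac12 r\pa_r$ dictates the asymptotic rate), with a small loss per iteration absorbed into the $\frac1n$ slack; the corresponding $r$-derivative bounds come from elliptic regularity for the Ornstein--Uhlenbeck operator. Finally, the error estimate \eqref{esterrorinomega} follows by inspection: by construction $\Psi_b$ retains only residual terms of the form $b^iZ^{2j}$ with either $i\geq n+1$, producing the $b^{n+1}$ piece of \eqref{esterrorinomega}, or $j\geq n+1$; the latter contributions always carry at least one explicit power of $b$, because the only ways $G$ and the Taylor tail of $\widetilde F(V_b)$ enter the error are through the terms $-b\pa_Z^2G$, $-B(b)(\cdot)$ and $-M(b)(\cdot)$ (with $B(b),M(b)=O(b)$) and through $V_b$ itself which starts at order $b$, producing the $b|Z|^{2n+2-k}$ piece. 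Pointwise derivatives in $r,Z$ up to order two are obtained from the explicit polynomial structure in $Z$ together with the decay estimates on $V_{i,j}$. The main obstacle I anticipate is the bookkeeping of the $r$-decay through the nonlinearity $\widetilde F$: products of the Taylor coefficients $G_j(r)$ with $V_{i,j}$ threaten to degrade the polynomial rate at each iteration, and one must verify that the cumulative loss after $n$ steps of the Hilbert expansion stays within the $\frac1n$ margin built into \eqref{decayv}.
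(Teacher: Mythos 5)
Your scheme coincides with the paper's: the same Hilbert expansion in $b^iZ^{2j}$, the same triangular hierarchy $(\L_r+j)V_{i,j}=F_{i,j}+\dots$, the same way of fixing $d_i$ by the gauge $(V_{i,0},\Lambda_r\Phi)_{L^2_{\rho_r}}=0$ and $c_i$ by the solvability condition at $j=1$, the same computation giving $d_1=1$, $V_{1,0}=0$ and $c_1$, and an inversion lemma with weighted decay (the paper's Lemma \ref{lemma:inversionoftheellipticsystemdefiningVij}). The genuine gap is in your treatment of the nonlinearity, i.e. precisely in the step needed to make \eqref{esterrorinomega} a bound that is uniform in $r$ with the weight $\la r\ra^{-\frac{2}{p-1}+\frac1n-j}$. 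By \eqref{decayv} the profiles $V_{i,j}$ decay strictly slower than $\Phi$, so $|V_b|/G\sim b\la r\ra^{\frac1n}$ is \emph{not} small once $r\gtrsim b^{-n}$: in that region the expansion of $(G+V_b)^p-G^p-pG^{p-1}V_b$ in powers of $V_b$ (whose terms $V_b^kG^{p-k}$ carry negative powers of $G$ for $k>p$, and whose remainder is only controlled when $|V_b|\leq \frac12|G_n|$) is simply not valid, so "matching coefficients of $b^iZ^{2j}$" cannot be applied to $\widetilde F(V_b)$ globally in $r$, and your inspection argument for the error does not yield \eqref{esterrorinomega} there. The paper resolves this by inserting the cutoff $\mu_b(r)=\mu(b^nr)$, splitting $\Psi_b=\Psi_b^{(1)}+\Psi_b^{(2)}$ as in \eqref{defeorrorbis}--\eqref{defeorrorremainderterms}, and estimating the \emph{unexpanded} nonlinearity crudely on the support of $1-\mu_b$, where $r\gtrsim b^{-n}$ and the decay of $\Phi^{p}$ produce a factor $(b^n)^{2-\frac{p-1}{n}}\leq b^{n+1}$; this is exactly where the hypothesis $n\geq p$ is used, a hypothesis your argument never invokes — a sign that this regime is missing from your proof.

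A secondary remark: the difficulty you flag (a cumulative loss of $r$-decay over the $n$ iterations) is not the actual obstruction. The inversion of $(\L_r+j)$ keeps the same weight with a single fixed loss $\eta$ (needed only at $j=0$ for a logarithmic divergence), and in the forcing terms the products $V_b^k\Phi^{p-k}$ with $2\leq k\leq 2n+1$ preserve the rate because $\Phi^{p-1}$ gains two powers of $r$, so that $\la r\ra^{\frac{2}{p-1}-\frac1n}(|V_b|/\Phi)^k\Phi^p\lesssim\la r\ra^{\frac{k-1}{n}-2}\lesssim1$; the $\frac1n$ slack is thus never consumed iteratively. Also, the derivative bounds in \eqref{decayv} are not obtained from elliptic regularity of the Ornstein--Uhlenbeck operator alone but from the ODE analysis (variation of constants with the explicit homogeneous solutions $\varphi_{1,j},\varphi_{2,j}$) in the appendix; this is a fixable imprecision, whereas the missing large-$r$ localization of the nonlinear expansion is the substantive gap.
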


\begin{remark} The law \eqref{anticiaptionmod}, \eqref{calculloi} written in the setting of the ODE type I blow up $\Phi=\left(\frac{1}{p-1}\right)^{\frac{1}{p-1}}$ yields the leading order $b$ law $$b_s+\frac{4p}{p-1}b^2=0$$ which is the frontier boundary computed in \cite{BK,MZduke}.
\end{remark}

\begin{proof} The proof follows by a brute force expansion.\\

\noindent{\bf step 1} Taylor expansion in $\Omega_{\delta}$. Recall the uniform bound 
\be
\label{uniformbound}
\frac{1}{\la r\ra^{\frac{2}{p-1}}}\lesssim \Phi(r)\lesssim \frac{1}{\la r\ra^{\frac{2}{p-1}}}
\ee
and 
\be
\label{boundphir}
\forall k\geq 1, \ \ |\Lambda^k\Phi|\lesssim_k \frac{1}{\la r\ra^{\frac{2}{p-1}+2}}.
\ee
Moreover, we compute 
$$\pa_ZG=-\frac{\mu'}{\mu}\frac{1}{\mu^{\frac{2}{p-1}}}\Lambda_r \Phi\left(\frac{r}{\mu(Z)}\right)$$
and a simple induction argument based on \eqref{boundphir} ensures for $k\geq 1$ the bound: 
\be
\label{cneknekonneo}
\forall |Z|\leq \delta, \ \ |\pa_Z^{2k}G(r,Z)|\lesssim_k \sum_{j=1}^{2k}|\Lambda^j_r\Phi|\lesssim \frac{1}{\la r\ra^{\frac{2}{p-1}+2}}.
\ee 
In particular, 
$$\left|\frac{\pa_Z^{2k}G}{\Phi}\right|\lesssim_k \frac{1}{\la r\ra^2}, \ \ k\geq 1.$$
We may therefore replace $G$ by its Taylor expansion at the origin \bee
&&G(r,Z)=G_n(r,0)+\frac{Z^{2n+2}}{(2n+1)!}\int_0^1(1-\tau)^{2n+1}\pa_Z^{2n+2}G(r,\tau Z)d\tau,\\
&& G_n(r,Z)=\sum_{k=0}^n\frac{\pa_Z^{2k}G(r,0)}{(2k)!}Z^{2k}
\eee
with for $|Z|\leq \delta$,
\be
\label{formulagn}
G_n(r,Z)=\Phi(r)\left[1+\sum_{k=1}^nZ^{2k}F_k(r)\right], \ \ F_k(r)\lesssim_k \frac 1{\la r\ra ^2}
\ee
and 
\bea\label{eq:estimateneededtoestimatePsi2}
|G-G_n|\lesssim_n \frac{Z^{2n+2}}{\la r\ra ^{\frac{2}{p-1}+2}}.
\eea

Next, let $\mu:\mathbb{R}^=\to \mathbb{R}^+$ a smooth cut-off function such that 
$$\mu=1\textrm{ on }0\leq s\leq 1\textrm{ and }\mu=0\textrm{ on }s\geq 2,$$
and let $\mu_b$ be defined by
\bee
\mu_b(r)=\mu(b^nr).
\eee
Note that for $|Z|\leq \de$ and $\de$ small enough, we have  
\bee
\frac{1}{b\la r\ra^{\frac{1}{n}}}\frac{|V_b|}{|G_n|} &\lesssim& \frac{1}{b\la r\ra^{\frac{1}{n}}}\frac{|V_b|}{\Phi}\lesssim \sum_{i=1}^n\sum_{j=0}^n b^{i-1}\de^j\la r\ra^{-\frac{1}{n}+\frac{2}{p-1}}|V_{i,j}|\lesssim b+\de 
\eee
where we anticipated on \eqref{degeneracy}. For $b$ and $\de$ small enough, we infer
\bea\label{eq:estimateneededtoestimatePsi2bis}
\frac{|V_b|}{|G_n|} \leq \frac{1}{2}\textrm{ on the support of }\mu_b.
\eea
We now Taylor expand the nonlinearity using 
$$(1+x)^p-1-px^{p-1}=\sum_{k=2}^{2n+1}a_kx^{k}+O(x^{2n+2}),\,\,\,\, |x|\leq \frac{1}{2}$$ 
which yields 
\be\label{eq:estimateneededtoestimatePsi2ter}
\mu_b(r)\Big((G_n+V_b)^{p}-G_n^{p}-pG_n^{p-1}V_b\Big)=\mu_b(r)\left(\sum_{k=2}^{2n+1}a_kV_b^kG_n^{p-k}+O(V_b^{2n+2}G_n^{p-(2n+2)})\right)\ee
Also, from \eqref{formulagn}: $\forall \alpha\in \Bbb Z$: 
\be\label{eq:estimateneededtoestimatePsi2quatre}
G_n^\alpha=\Phi(r)^\alpha\left(1+\sum_{j=1}^nZ^{2j}F_j(r)\right)^\alpha=\Phi(r)^\alpha\left[1+\sum_{j=1}^nZ^{2j}H_{\alpha,j}(r)+O(Z^{2n+2})\right]
\ee
with $$ |\partial_r^kH_{\alpha,j}(r)|\lesssim_k \frac1{\la r\ra^{2+k}}.$$ 
Thus, we decompose
\bea\label{eq:decompmositionofPsibn}
\Psi_b&=& \Psi^{(1)}_b + \Psi^{(2)}_b
\eea
where
\bea
\label{defeorrorbis}
\Psi^{(1)}_b&=&\left(\L_r+\frac 12 Z\pa_Z\right) V_b -  b\pa^2_{Z}(G_n+V_b)\\
\nonumber & - & \mu_b(r)\sum_{k=2}^{2n+1}a_k\left(\frac{V_b}{\Phi}\right)^k\Phi^{p}\left[1+\sum_{j=1}^nZ^{2j}H_{p-k,j}(r)\right]- p\Phi^{p-1}V_b\sum_{j=1}^nZ^{2j}H_{p-1,j}(r)\\
\nonumber &-&B(b)\left[\frac 1{2}Z\pa_Z(G_n+V_b)+b\pa_bV_b\right]-M(b)(\Lambda_r+Z\pa_Z)(G_n+ V_b).
\eea
and
\bea
\label{defeorrorremainderterms}
\Psi^{(2)}_b&=& -  b\pa^2_{Z}(G-G_n) - (1-\mu_b(r))\Big((G+V_b)^{p}-G^{p}-pG^{p-1}V_b\Big)\\
\nonumber & - & \mu_b(r)\left\{(G+V_b)^{p}-G^{p}-pG^{p-1}V_b - \sum_{k=2}^{2n+1}a_k\left(\frac{V_b}{\Phi}\right)^k\Phi^{p}\left[1+\sum_{j=1}^nZ^{2j}H_{p-k,j}(r)\right]\right\}\\
\nonumber&-& pG^{p-1}V_b+p\Phi^{p-1}V_b\left(1+\sum_{j=1}^nZ^{2j}H_{p-1,j}(r)\right)\\
\nonumber &-&B(b)\frac 1{2}Z\pa_Z(G-G_n) -M(b)(\Lambda_r+Z\pa_Z)(G-G_n).
\eea

\noindent{\bf step 2} Solving the approximate problem. We solve \eqref{defeorrorbis} up to an error of order $Z^{2n+2}$ or $b^{n+1}$ by looking for a solution of the form 
$$V_b(r,Z)=\sum_{i=1}^n\sum_{j=0}^nb^iV_{i,j}(r)Z^{2j}, \ \ B(b)=\sum_{i=1}^nc_ib^i, \ \ M(b)=\sum_{i=1}^nd_ib^i.$$ 
Since the polynomial dependance in both $b$ and $Z$ is preserved by the RHS of \eqref{defeorrorbis}, we sort the terms in $b^iZ^{2j}$ and obtain a hierarchy of equations of the following form for $1\leq i\leq n$, $0\leq j\leq n$ 
\bee
&&\left[\L_r+\frac 12 Z\pa_Z\right](V_{i,j}(r)Z^{2j})\\
&=& F_{i,j}(r)Z^{2j}+Z^{2j}\left|\begin{array}{ll} d_i\Lambda \Phi\ \ \mbox{for}\ \ j=0\\ \frac{c_i}{2(2j-1)!}\pa_Z^{2j}G(r,0)+\frac{d_i}{(2j)!}\left[\Lambda _r+2j\right]\pa_{Z}^{2j}G(r,0)\end{array}\right.
\eee
or equivalently:
\bea
\label{neiocnoneo}
&&\left[\L_r+j\right]V_{i,j}(r)\\
\nonumber &=& F_{i,j}(r)+\left|\begin{array}{ll} d_i\Lambda \Phi\ \ \mbox{for}\ \ j=0\\ \frac{c_i}{2(2j-1)!}\pa_Z^{2j}G(r,0)+\frac{d_i}{(2j)!}\left[\Lambda _r+2j\right]\pa_{Z}^{2j}G(r,0)\end{array}\right. 
\eea
where $F_{i,j}$ depends only on $V_{i',j'}$ with $i'\leq i$, $j'\leq j$ and $(i',j')\neq (i,j)$, and on $d_{i'}$ and $c_{i'}$ with $i'<i$. 
Moreover, a fundamental observation is that the decay \eqref{decayv} is preserved by the forcing term \eqref{defeorrorbis}, i.e. 
$$|\pa_r^kF_{i,j}(r)|\lesssim_n\frac{1}{ \la r\ra ^{\frac{2}{p-1}+k-\frac{1}{n}}},$$ 
where we used in particular the fact that for $2\leq k\leq 2n+1$, we have
\bee
\la r\ra^{\frac{2}{p-1}-\frac{1}{n}}\left(\frac{|V_b|}{\Phi}\right)^k\Phi^{p}  \lesssim \la r\ra^{\frac{2}{p-1}-\frac{1}{n}}\left(\la r\ra^{\frac{1}{n}}\right)^k\left(\frac{1}{\la r\ra^{\frac{2}{p-1}}}\right)^p\lesssim \la r\ra^{\frac{k-1}{n}-2} \lesssim 1.
\eee

In order to invert \eqref{neiocnoneo}, we will rely on the following lemma which is proved in Appendix \ref{sec:proofinversionoftheellipticsystemdefiningVij}. 
\begin{lemma}\label{lemma:inversionoftheellipticsystemdefiningVij}
Let $j\in\mathbb{N}$, and let $u_j(r)$ the solution to 
$$(\L_r+j)u=f_j\textrm{ and }(u_1, \Lambda_r\Phi)=0\textrm{ if }j=1.$$
Furthermore, assume that we have in the case $j=1$
$$(f_1,\Lambda_r\Phi)_{L^2_{\rho_r}}=0.$$ 
Then, for $\eta>0$ and $k\in\mathbb{N}$, $u$ satisfies the following bound 
$$ \sum_{l=0}^k\left\|\la r\ra^{\frac{2}{p-1}+l-\eta}\partial_r^lu_j\right\|_{L^{\infty}}\lesssim_{k,\eta} \sum_{l=0}^k\left\|\la r\ra^{\frac{2}{p-1}+l-\eta}\partial_r^lf_j\right\|_{L^{\infty}}.$$ 
\end{lemma}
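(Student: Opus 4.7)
The plan is to treat $(\mathcal{L}_r + j) u_j = f_j$ as a second order linear ODE
\[
-u'' + \Bigl(\frac{r}{2}-\frac{2}{r}\Bigr) u' + \Bigl(\tfrac{1}{p-1}+j-p\Phi^{p-1}\Bigr) u = f_j
\]
and build an explicit Green's function. Abel's identity yields the Wronskian $W(r) = c\, r^{-2} e^{r^2/4}$ for any two independent homogeneous solutions. I would exhibit a solution $w_\sharp$ regular at the origin (Frobenius expansion at the regular singular point $r=0$, with $w_\sharp(0)=1$) and, by shooting, a second solution $w_d$ decaying at infinity. Since $p\Phi^{p-1}(r)=O(\langle r\rangle^{-2})$ by \eqref{propertiesphi}, the equation at infinity reduces to $-u'' + \frac{r}{2}u' + (\tfrac{1}{p-1}+j) u\approx 0$, whose two WKB branches are $w_d(r)\sim r^{-\frac{2}{p-1}-2j}$ and $\sim e^{r^2/4}\, r^{\frac{2}{p-1}+2j-3}$. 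By Lemma \ref{lemma:spectrumoftheoperatormathcalLr} and the spectral assumption \eqref{hypspectral}, $-j$ is not an eigenvalue of $\mathcal{L}_r$ for $j\in\{0\}\cup\{2,3,\dots\}$, so for these $j$ the two fundamental solutions are linearly independent and $w_\sharp$ picks up a nontrivial $e^{r^2/4}$ component at infinity.

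For these $j$, the Green's function representation
\[
u_j(r)=w_d(r)\int_0^r\frac{w_\sharp(s)}{W(s)}f_j(s)\,ds + w_\sharp(r)\int_r^\infty\frac{w_d(s)}{W(s)}f_j(s)\,ds
\]
gives existence, and the weighted pointwise estimate falls out by direct substitution: in the first integral the exponentials of $w_\sharp$ and $W$ cancel, leaving $w_\sharp/W\sim s^{\frac{2}{p-1}+2j-1}$, so combined with $f_j(s)\sim \langle s\rangle^{-\frac{2}{p-1}+\eta}$ the integrand is $s^{2j-1+\eta}$ whose primitive is $\lesssim r^{2j+\eta}$, and multiplication by $w_d(r)\sim r^{-\frac{2}{p-1}-2j}$ gives exactly $r^{-\frac{2}{p-1}+\eta}$; the second integral is controlled by Laplace's method on $e^{-s^2/4}$ and decays strictly faster. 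The small $\eta>0$ is necessary to absorb the borderline logarithm that would appear at $j=0$, $\eta=0$. For $j=1$, the eigenvector $\psi_{-1}\propto\Lambda_r\Phi$ forces $w_\sharp\propto w_d$, and I would invoke the Fredholm alternative: the compatibility condition $(f_1,\Lambda_r\Phi)_{L^2_{\rho_r}}=0$ ensures solvability, a second independent homogeneous solution is produced by reduction of order, and the orthogonality $(u_1,\Lambda_r\Phi)=0$ fixes $u_1$ uniquely; the pointwise bound then follows by the analogous Green's function estimate, and the Fredholm correction is proportional to $\Lambda_r\Phi\sim \langle r\rangle^{-\frac{2}{p-1}-2}$, so it has strictly better than the required decay and does not spoil the estimate.

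The derivative bounds are obtained by bootstrapping from the ODE itself: solving for $u_j''$ expresses it as a linear combination of $u_j$, $u_j'$, $f_j$ with coefficients whose decay is controlled via \eqref{propertiesphi}, yielding a bound on $\langle r\rangle^{\frac{2}{p-1}+2-\eta}\partial_r^2 u_j$; higher derivatives follow by differentiating the equation and iterating. The main technical obstacle I anticipate is the $j=1$ case, where one must produce and track globally a second independent homogeneous solution, essentially $w_\sharp(r)\int^r s^{-2}e^{s^2/4}/w_\sharp(s)^2\,ds$, through the transition region $r\sim 1$ between the Bessel-type behavior near the origin and the $e^{r^2/4}$ growth at infinity, and match the asymptotic constants carefully so that the Green's function estimate closes uniformly in the full range $r\in(0,\infty)$.
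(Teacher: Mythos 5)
Your Green's function construction for $r\geq 1$ is essentially the paper's argument (the paper also writes $u_j$ via variation of constants with the fundamental system $\varphi_{1,j}\sim r^{\frac{2}{p-1}+2j-3}e^{r^2/4}$, $\varphi_{2,j}\sim r^{-\frac{2}{p-1}-2j}$ and Wronskian $r^{-2}e^{r^2/4}$, quoting \cite{CRS}; near the origin it sidesteps the ODE analysis entirely by inverting the self-adjoint operator in $L^2_{\rho_r}$ and using the 3d Sobolev embedding on $\{r\leq 1\}$, the matching constant being fixed by $u_j(1)$). Your $l=0$ estimate is correct. But there is a genuine gap in your derivative bounds. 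You propose to get $\pa_r^2u_j$ by ``solving for $u_j''$ from the ODE'' and plugging in the $l=0,1$ bounds. This cannot give the claimed weight: the equation reads $u_j''=\big(\tfrac r2-\tfrac 2r\big)u_j'+\big(\tfrac1{p-1}+j-p\Phi^{p-1}\big)u_j-f_j$, and with $|u_j|\lesssim \la r\ra^{-\frac2{p-1}+\eta}$, $|u_j'|\lesssim \la r\ra^{-\frac2{p-1}-1+\eta}$, $|f_j|\lesssim\la r\ra^{-\frac2{p-1}+\eta}$ the right-hand side is only $O(\la r\ra^{-\frac2{p-1}+\eta})$ because of the growing drift $\tfrac r2\pa_r$, whereas the lemma demands $|u_j''|\lesssim\la r\ra^{-\frac2{p-1}-2+\eta}$ — you are off by two powers of $r$, and the loss compounds at each higher order. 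The bound is nevertheless true, but only because of a cancellation invisible to the triangle inequality: to leading order $\tfrac r2 u_j'+(\tfrac1{p-1}+j)u_j-f_j$ vanishes (equivalently, in the representation formula the explicit $-f_j$ term cancels the leading Laplace asymptotics of $\varphi_{1,j}''\int_r^\infty \varphi_{2,j}f_j\,s^2e^{-s^2/4}ds$). So the derivative estimates must be extracted by differentiating the variation-of-constants representation and using symbol-type asymptotics of $\varphi_{1,j},\varphi_{2,j}$ together with the weighted bounds on $\pa_r^lf_j$ (which is why they appear on the right-hand side of the lemma), or by commuting $\pa_r$ with the equation and re-inverting at each level — not by the naive bootstrap you describe.

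A secondary, fixable omission: for $j=1$ you invoke the Fredholm alternative only as a solvability statement. In the pointwise estimate the orthogonality $(f_1,\Lambda_r\Phi)_{L^2_{\rho_r}}=0$ must be used quantitatively: in the variation-of-constants formula the coefficient of the growing branch is $\int_0^r\Lambda_r\Phi\, f_1\, s^2e^{-s^2/4}ds=-\int_r^\infty\Lambda_r\Phi\, f_1\, s^2e^{-s^2/4}ds$, and it is precisely this rewriting that makes that coefficient Gaussian-small and cancels the $e^{r^2/4}$ growth; adding multiples of $\Lambda_r\Phi$ (your ``Fredholm correction'') is only about uniqueness and does not address this point.
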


We may now come back to \eqref{neiocnoneo}. We consider first the case $j=0$, then $j=1$, and finally $j\geq 2$. 
\begin{itemize}
\item We have for $j=0$ 
\bee
\L_rV_{i,0}(r) &=& F_{i,0}(r)+ d_i\Lambda \Phi
\eee
and hence, in view of Lemma \ref{lemma:inversionoftheellipticsystemdefiningVij}, the exists a unique $V_{i,0}$ which in view of the above estimate for $F_{i,j}$ satisfies
$$|\pa_r^kV_{i,0}(r)|\lesssim_n\frac{1}{ \la r\ra ^{\frac{2}{p-1}+k-\frac{1}{n}}}.$$
Furthermore, projecting on $\Lambda_r\Phi$ and using the fact that $\L_r(\Lambda_r\Phi)=-\Phi$, we have
\bee
-(V_{i,0}, \Lambda_r\Phi)_{L^2_{\rho_r}} &=& (F_{i,0}, \Lambda_r\Phi)_{L^2_{\rho_r}}+ d_i\|\Lambda \Phi\|_{_{L^2_{\rho_r}}}^2
\eee
and we choose $d_i$ to enforce
\bea\label{eq:orthogonalityVi0}
(V_{i,0}, \Lambda_r\Phi)_{L^2_{\rho_r}}=0.
\eea

\item Also, since $\pa_Z^{2}G(r,0)=-\Lambda_r\Phi(r)$, we have for $j=1$
\bee
\left[\L_r+1\right]V_{i,1}(r) &=& F_{i,1}(r) - \frac{c_i}{2}\Lambda_r\Phi-\frac{d_i}{2}(\Lambda _r+2)\Lambda_r\Phi.
\eee
We choose $c_i$ to enforce 
\bea\label{eq:enforcmentoforthogonalitythanksotci}
 (F_{i,1}, \Lambda_r\Phi)_{L^2_{\rho_r}} - \frac{c_i}{2}\|\Lambda \Phi\|_{_{L^2_{\rho_r}}}^2 -\frac{d_i}{2}\Big((\Lambda _r+2)\Lambda_r\Phi, \Lambda_r\Phi\Big)_{L^2_{\rho_r}} =0.
\eea
Thus, we may apply Lemma \ref{lemma:inversionoftheellipticsystemdefiningVij}, and hence the exists a unique $V_{i,1}$ such that
\bea\label{eq:orthogonalityVi1}
(V_{i,1}, \Lambda_r\Phi)_{L^2_{\rho_r}}=0,
\eea
and which in view of the above estimate for $F_{i,j}$ satisfies
$$|\pa_r^kV_{i,1}(r)|\lesssim_n\frac{1}{ \la r\ra ^{\frac{2}{p-1}+k-\frac{1}{n}}}.$$
Note that \eqref{orthogonality} follows from \eqref{eq:orthogonalityVi0} and  \eqref{eq:orthogonalityVi1}.

\item Finally, for $j\geq 2$, we may apply Lemma \ref{lemma:inversionoftheellipticsystemdefiningVij}, and hence the exists a unique $V_{i,j}$ which in view of the above estimate for $F_{i,j}$ satisfies
$$|\pa_r^kV_{i,j}(r)|\lesssim_n\frac{1}{ \la r\ra ^{\frac{2}{p-1}+k-\frac{1}{n}}}.$$ 
\end{itemize}

\noindent{\bf step 3} Proof of the error estimate. We are now in position to prove the error estimate \eqref{esterrorinomega}. As all terms of the type $b^iZ^{2j}$ for $1\leq i\leq n$ and $0\leq j\leq n$ in \eqref{defeorrorbis} vanish due to the choice of $V_{i,j}$, and in view of the estimates for $\Phi$, $G$, $H_{\alpha, j}$, as well as the estimates of step 2 above for $V_{i,j}$, we infer
\bee
\forall Z\in \Omega_{\delta}, \ \ |\pa_r^j\pa_Z^k \Psi^{(1)}_b|\lesssim_{n} \frac{b^{n+1}+b|Z|^{2n+2-k}}{\la r\ra^{\frac{2}{p-1}-\frac{1}{n}+j}}\,\,\,\,\forall j, k.
\eee
Also, we have for $Z\in  \Omega_{\delta}$ and $0\leq j+k\leq 2$
\bee
\left|\pa_r^j\pa_Z^k\Big((G+V_b)^{p}-G^{p}-pG^{p-1}V_b\Big)\right| \lesssim_{n} \frac{1}{\la r\ra^{\frac{2p}{p-1}-\frac{p}{n}+j}},
\eee
where we used the fact that $j+k\leq p$, since $p>5$ and $j+k\leq 2$, which ensures that the above expression does not contain negative powers of $G+V_b$. In view of the support of $1-\mu_b$, we deduce  for $Z\in  \Omega_{\delta}$ and $0\leq j+k\leq 2$
\bee
\left|\pa_r^j\pa_Z^k\left((1-\mu_b)\Big((G+V_b)^{p}-G^{p}-pG^{p-1}V_b\Big)\right)\right| \lesssim_{n} \frac{(b^n)^{2-\frac{p-1}{n}}}{\la r\ra^{\frac{2}{p-1}-\frac{1}{n}+j}}\lesssim_{n} \frac{b^{n+1}}{\la r\ra^{\frac{2}{p-1}-\frac{1}{n}+j}}
\eee
where we used the fact that $n\geq p$ in the last inequality. The other terms of $\Psi^{(2)}_b$ defined in \eqref{defeorrorremainderterms} are estimated using  \eqref{eq:estimateneededtoestimatePsi2} \eqref{eq:estimateneededtoestimatePsi2bis} \eqref{eq:estimateneededtoestimatePsi2ter} \eqref{eq:estimateneededtoestimatePsi2quatre} which leads to
 \bee
\forall Z\in \Omega_{\delta}, \ \ |\pa_r^j\pa_Z^k \Psi^{(2)}_b|\lesssim_{n} \frac{b^{n+1}+b|Z|^{2n+2-k}}{\la r\ra^{\frac{2}{p-1}-\frac{1}{n}+j}},\ \ \ 0\leq j+k\leq 2.
\eee
In view of the decomposition \eqref{eq:decompmositionofPsibn} for $\Psi_b$, we immediately infer from the estimates for $\Psi_b^{(1)}$ and $\Psi_b^{(2)}$ the error estimate \eqref{esterrorinomega} for $\Psi_b$.\\

\noindent{\bf step 4} Computation of $F_{1,0}$ and $F_{1,1}$. In view of the definition of $F_{i,j}$ in \eqref{neiocnoneo}, we have
\bee
F_{1,0}+F_{1,1}Z^2 = \partial_Z^2G(r,Z)+p\Phi^{p-1}V_{1,0}Z^2H_{p-1,1}(r)+O(Z^4). 
\eee
We compute the Taylor expansion 
\bea
\label{expansionG}
\pa^2_ZG&=&-\Lambda_r \Phi(r)+\frac 32 Z^2(2\Lambda_r \Phi+\Lambda_r^2\Phi)(r)+O(Z^4),
\eea
which yields
\bea\label{eq:computationofF10F11}
F_{1,0}=-\Lambda_r \Phi(r),\,\,\,\, F_{1,1}=\frac 32 (2\Lambda_r \Phi+\Lambda_r^2\Phi)(r)+p\Phi^{p-1}V_{1,0}H_{p-1,1}(r).
\eea

\noindent{\it Proof of \eqref{expansionG}}. Recall that we have
$$G(r,Z)=\frac{1}{\mu(Z)^{\frac{2}{p-1}}}\Phi\left(\frac{r}{\mu(Z)}\right), \ \ \mu(Z)=\sqrt{1+Z^2}.$$ Then 
\be
\label{cnekoneoneo}
\pa_ZG=-\frac{\mu'}{\mu}\frac{1}{\mu^{\frac{2}{p-1}}}\Lambda_r \Phi\left(\frac{r}{\mu(Z)}\right).
\ee 
We further compute:
\bea
\label{formulag}
 \pa^2_ZG & = & \frac{1}{(1+Z^2)^{2}\mu^{\frac{2}{p-1}}}\left[(Z^2-1)\Lambda_r \Phi+Z^2\Lambda_r^2\Phi\right]\left(\frac{r}{\mu}\right).
\eea
We now Taylor expand at $Z=0$ and obtain in particular using the uniform bound on $\Lambda_r^i\Phi(r)$, $i=1,2,3$:
\be
\label{claculinterne}
\frac{1}{\mu^{\frac2{p-1}}}\Lambda_r\Phi\left(\frac{r}{\mu}\right)=\Lambda_r\Phi (r)-\frac{Z^2}2\Lambda_r^2\Phi(r)+O(Z^4)
\ee
which yields the Taylor expansion at the origin:
$$
\pa^2_ZG=-\Lambda_r \Phi(r)+\frac 32 Z^2(2\Lambda_r \Phi+\Lambda_r^2\Phi)(r)+O(Z^4).
$$
This concludes the proof of \eqref{expansionG}. \\

\noindent{\bf step 5} Computation of $V_{1,0}$, $d_1$ and $c_1$. From \eqref{neiocnoneo} for $j=0$, we have
\bee
\L_rV_{1,0}(r) &=& F_{1,0}(r)+ d_1\Lambda_r \Phi
\eee
which together with \eqref{eq:computationofF10F11} yields
\bee
\L_rV_{1,0}(r) &=& (d_1-1)\Lambda_r \Phi.
\eee
Since we choose $d_1$ to enforce the orthogonality \eqref{eq:orthogonalityVi0}, we immediately deduce 
$$V_{1,0}=0,\,\,\,\, d_1=1,$$
which proves in particular \eqref{degeneracy}.

Next, recall from \eqref{eq:enforcmentoforthogonalitythanksotci} that we choose $c_1$ to enforce 
\bee
 (F_{1,1}, \Lambda_r\Phi)_{L^2_{\rho_r}} - \frac{c_1}{2}\|\Lambda_r \Phi\|_{_{L^2_{\rho_r}}}^2 -\frac{d_1}{2}\Big((\Lambda_r+2)\Lambda_r\Phi, \Lambda_r\Phi\Big)_{L^2_{\rho_r}} =0
\eee
which together with \eqref{eq:computationofF10F11} and the computation of $V_{1,0}$ and $d_1$ above yields 
\bee
c_1 &=& 4+\frac{2(\Lambda^2_r\Phi, \Lambda_r\Phi)_{L^2_{\rho_r}}}{\|\Lambda_r \Phi\|_{_{L^2_{\rho_r}}}^2}\\
&=& 2(2-s_c)+\frac{\|r\Lambda_r \Phi\|_{_{L^2_{\rho_r}}}^2}{2\|\Lambda_r \Phi\|_{_{L^2_{\rho_r}}}^2},
\eee
where we used in the last inequality the following computation
\be
\label{integrationbyparts}
(\Lambda_rf,f)_{L^2_{\rho_r}}=-s_c\|f\|_{L^2_{\rho_r}}^2+\frac 14\|rf\|_{L^2_{\rho_r}}^2, \ \ s_c=\frac 32-\frac{2}{p-1}.
\ee
This finishes the proof of \eqref{calculloi} and hence of Lemma \ref{lemmaapproximate}.
\end{proof}

\begin{lemma}[High order localized approximate solution]
\label{lemmaapproximatelocalized}
Let $n\in \Bbb N^*$ such that $n\geq p$. For $0<\delta<\delta(n)
\ll1 $ and $0<b<b(n)\ll 1$ small enough, let $(V_b,B(b),M(b))$ be the approximate solution given by Lemma \ref{lemmaapproximate}. Let an even cut off function 
$$\chi_\delta(z)=\chi\left(\frac{Z}{\delta}\right), \ \ \chi(\sigma)=\left|\begin{array}{ll} 1 \ \ \mbox{for}\ \ |\sigma|\leq 1,\\ 0 \ \ \mbox{for}\ \ |\sigma|\geq 2\end{array}\right.,$$ 
and let 
$$\Phit_b=\Phi_b+\tilde{v}_b\textrm{ where }\tilde{v}_b=\chi_\delta v_b\textrm{ and }v_b(z)=V_b(Z).$$
Then, $\Phit_b$ satisfies  
 \be\label{eqphibtilde}
-bB(b)\pa_{b}\Phit_b+\left(\frac{1}{2}-M(b)\right)\Lambda_Y \Phit_b-\Delta \Phit_b+\Phit_b^p=\Psit_b
\ee
where
\be\label{eq:decompoistionPsitbinPsitb0andG}
\Psit_b = b(\chi_\delta-1)\pr_Z^2G+B(b)(\chi_\delta-1)Z\partial_ZG+\Psit_b^{(0)}
\ee
and where $\Psit_b^{(0)}$ is estimated by
\be\label{esterrorinomegabis}
|\pa_r^j\pa_Z^k \Psit_{b}^{(0)}|\lesssim_{\delta} \frac{b^{n+1}+b|Z|^{2n+2-k}}{\la r\ra^{\frac 2{p-1}-\frac{1}{n}}}{\bf 1}_{|Z|\leq 2\delta},\ \ \ 0\leq j+k\leq 2.
\ee
Furthermore,  $\Phit_b$ satisfies also 
\be\label{deriveebzeroprofil}
(\Phit_b)_{|b=0}=\Phi, \ \ \frac{\pa \Phit_b}{\pa b}_{|b=0}=-\frac 12(P_2+2P_0)(z)\Lambda_r\Phi.
\ee
\end{lemma}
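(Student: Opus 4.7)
The plan is to substitute $\widetilde{\Phi}_b = \Phi_b + \chi_\delta v_b$ directly into the left-hand side of \eqref{eqphibtilde} and organise the result according to whether each piece comes from $\Phi_b$ alone, from the linear operator acting on $\chi_\delta v_b$, or from the nonlinear remainder. For the $\Phi_b$ piece, Lemma \ref{reconnectingprofiles} tells us that $\frac{1}{2}z\partial_z\Phi_b = \Delta_r\Phi_b - \frac{1}{2}\Lambda_r\Phi_b + \Phi_b^p$, which kills the stationary contribution, leaving only $-\partial_z^2\Phi_b = -b\partial_Z^2 G$, the modulation term $-bB(b)\partial_b\Phi_b$, and $-M(b)\Lambda_Y\Phi_b$. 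Combined with the matching contributions coming from $\chi_\delta v_b$, these pieces are what produces the highlighted leading terms $b(\chi_\delta-1)\partial_Z^2 G$ and $B(b)(\chi_\delta-1)Z\partial_Z G$ of \eqref{eq:decompoistionPsitbinPsitb0andG}.

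The linear operator acting on $\chi_\delta v_b$ is expanded by writing $\chi_\delta v_b = \chi(Z/\delta) V_b(r,Z)$ with $Z = \sqrt{b}z$ and applying the chain rule. On $\{|Z|\leq\delta\}$, where $\chi \equiv 1$, the algebra is identical to the one carried out in the proof of Lemma \ref{lemmaapproximate}, and together with the nonlinear remainder the total error reassembles into the quantity $\Psi_b$ of \eqref{defeorror}, which by \eqref{esterrorinomega} obeys the bound claimed for $\widetilde{\Psi}_b^{(0)}$ on that set. When a derivative falls on $\chi$ instead of on $V_b$, the resulting commutator is supported in the collar $\{\delta\leq |Z|\leq 2\delta\}$. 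Each such commutator picks up an inverse power $\chi^{(k)}(Z/\delta)/\delta^k$ which is formally large in $1/\delta$ but harmless for the $\lesssim_\delta$ estimate; the product with $V_b$ or its derivatives then carries the required $O(b)$ smallness (because $V_{1,0}=0$ by \eqref{degeneracy} implies $|V_b|\lesssim b$ on $\{|Z|\leq 2\delta\}$) together with the decay \eqref{decayv} in $r$, matching the bound \eqref{esterrorinomegabis} on the collar.

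The nonlinear remainder $(\Phi_b + \chi_\delta v_b)^p - \Phi_b^p - p\Phi^{p-1}\chi_\delta v_b$ is treated exactly as in the proof of Lemma \ref{lemmaapproximate}: the bound $|\chi_\delta v_b|/|\Phi_b|\leq 1/2$, which follows from \eqref{eq:estimateneededtoestimatePsi2bis} combined with $|\chi_\delta|\leq 1$, legitimises Taylor expansion of the $p$-th power. The resulting terms on $\{|Z|\leq\delta\}$ merge into $\Psi_b$, those on the collar are of the same type as the linear commutators, and outside $\{|Z|\leq 2\delta\}$ the nonlinear correction vanishes together with the cutoff. Assembling the three contributions yields \eqref{eq:decompoistionPsitbinPsitb0andG} together with \eqref{esterrorinomegabis}.

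To verify \eqref{deriveebzeroprofil}, note that $\mu_b|_{b=0}=1$ gives $\Phi_b|_{b=0}=\Phi$, and every monomial in \eqref{definitionv} carries a positive power of $b$, so $V_b$ vanishes at $b=0$ and $(\widetilde{\Phi}_b)|_{b=0}=\Phi$. Differentiating $\mu_b^{-2/(p-1)}\Phi(r/\mu_b)$ in $b$ at $b=0$ using $\partial_b\mu_b|_{b=0}=z^2/2$ yields $\partial_b\Phi_b|_{b=0} = -\frac{z^2}{2}\Lambda_r\Phi$, while the contribution of $\chi_\delta v_b$ vanishes: substituting $Z=\sqrt{b}z$ into \eqref{definitionv} produces $v_b(r,z)=\sum_{i,j} b^{i+j}z^{2j}V_{i,j}(r)$, whose coefficient of $b^1$ is $V_{1,0}(r)$, equal to zero by \eqref{degeneracy}. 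Writing $z^2 = P_2(z) + 2P_0(z)$ via \eqref{hermitepolynomial} completes the identification. The main obstacle throughout is the collar bookkeeping: one must check that the formally large inverse powers of $\delta$ produced by differentiating the cutoff are fully compensated by the smallness $V_b = O(b)$ on $\Omega_\delta$, so that the total error conforms to the $b^{n+1}+b|Z|^{2n+2-k}$ scaling in \eqref{esterrorinomegabis}.
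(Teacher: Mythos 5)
Your proposal is correct and follows essentially the same route as the paper's proof: substitute $\Phit_b=\Phi_b+\chi_\delta v_b$, use Lemma \ref{reconnectingprofiles} together with the identity \eqref{defeorror} and the bound \eqref{esterrorinomega} of Lemma \ref{lemmaapproximate} to absorb the interior terms into $\Psi_b$, control the cutoff-commutator and nonlinear cutoff-mismatch terms on the collar $\{\delta\leq |Z|\leq 2\delta\}$ by $O_\delta(b)\la r\ra^{-(\frac{2}{p-1}-\frac 1n)}$, which is dominated there by $b|Z|^{2n+2-k}$, and deduce \eqref{deriveebzeroprofil} from the structure of \eqref{definitionv} and $V_{1,0}=0$. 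One minor slip: the bound $|V_b|\lesssim b$ on $\{|Z|\leq 2\delta\}$ already follows from the fact that every monomial in \eqref{definitionv} carries $b^i$ with $i\geq 1$, and does not require \eqref{degeneracy}, which is only needed (as you indeed use it) to get $\pa_b\tilde v_{b}{}_{|b=0}=0$.
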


\begin{proof}
Since $v_b(z)=V_b(Z)$, and in view of the equation \eqref{defeorror} satisfied by $V_b$, we infer
$$
\mathcal L_Yv_b=\pr_z^2\Phi_b +bB(b)(\pr_b\Phi_b+\pa_bv_b)+M(b)(\Lambda_Y \Phi_b+\Lambda_Y v_b)+F(v_b)+\Psi_b
$$
with $\Psi_b$ satisfying \eqref{esterrorinomega}. Since $\tilde{v}_b=\chi_\delta v_b$, we infer 
\be
\label{eqvtilde}
\mathcal L_Y\tilde{v}_b=\chi_\delta\pr_z^2\Phi_b+bB(b)(\chi_\delta\pr_b\Phi_b+\pa_b\tilde{v}_b)+M(b)(\Lambda_Y \Phi_b+\Lambda_Y \tilde{v}_b)+F(\tilde{v}_b)+\Psit_b^{(0)}
\ee
with
\bee
\Psit_b^{(0)} &=& \chi\left(\frac{Z}{\delta}\right)\Psi_b + \left[\frac{1}{2}Z\chi'-\frac{b}{\de^2}\chi'' -\frac{B(b)}{\de}\chi'-M(b)Z\chi'\right]\left(\frac{Z}{\delta}\right)V_b \\
&-& \frac{2b}{\de}\chi'\left(\frac{Z}{\delta}\right)\partial_ZV_b+\left(1-\chi\left(\frac{Z}{\delta}\right)\right)G^p+\chi\left(\frac{Z}{\delta}\right)(G+V_b)^p - \left(G+\chi\left(\frac{Z}{\delta}\right)V_b\right)^p.
\eee
In view of the estimate \eqref{esterrorinomega} for $\Psi_b$, the properties of the support of $\chi$ and the estimates for $G$ and $V_b$, we immediately infer for $j+k\leq 2$
\bee
\nonumber\left|\pa_r^j\pa_Z^k \Psit_{b}^{(0)}\right|&\lesssim_{\delta}& \frac{b^{n+1}+b|Z|^{2n+2-k}}{\la r\ra^{\frac 2{p-1}-\frac{1}{n}}}{\bf 1}_{|Z|\leq 2\delta}+\frac{b}{\la r\ra^{\frac 2{p-1}-\frac{1}{n}}}{\bf 1}_{\delta\leq |Z|\leq 2\delta}\\
&\lesssim_{\delta}& \frac{b^{n+1}+b|Z|^{2n+2-k}}{\la r\ra^{\frac 2{p-1}-\frac{1}{n}}}{\bf 1}_{|Z|\leq 2\delta}
\eee
which is \eqref{esterrorinomegabis}.

Next, since $\Phit_b=\Phi_b+\tilde{v}_b$ and in view of the definition of $\Psit_b$, we have
\bee
\Psit_b &=& -bB(b)\pa_{b}\Phit_b -\Delta_Y\Phit_b+\left(\frac{1}{2}-M(b)\right)\Lambda_Y \Phit_b-\Phit_b^p\\
&=& -\Delta_Y\Phi_b + \frac{1}{2}\Lambda_Y\Phi_b -\Phi_b^p\\
&& +\L_Y\tilde{v}_b -bB(b)(\pr_b\Phi_b+\pa_b\tilde{v}_b) -M(b)\Lambda_Y(\Phi_b+\tilde{v}_b)-(\Phi_b+\tilde{v}_b)^p+\Phi_b^p+p\Phi^{p-1}\tilde{v}_b\\
&=&-\partial^2_z\Phi_b +\L_Y\tilde{v}_b -bB(b)(\pr_b\Phi_b+\pa_b\tilde{v}_b) -M(b)\Lambda_Y(\Phi_b+\tilde{v}_b)-F(\tilde{v}_b)
\eee
where we have used the equation \eqref{reconnectingequations} for $\Phi_b$ and the definition of $F$ in the last equality. Plugging \eqref{eqvtilde}, we infer
\bee
\Psit_b &=& (\chi_\delta-1)\pr_z^2\Phi_b+bB(b)(\chi_\delta-1)\pr_b\Phi_b+\Psit_b^{(0)}\\
&=& b(\chi_\delta-1)\pr_Z^2G+B(b)(\chi_\delta-1)Z\partial_ZG+\Psit_b^{(0)}
\eee
wich is \eqref{eq:decompoistionPsitbinPsitb0andG}.

Finally, we prove \eqref{deriveebzeroprofil}. We compute from \eqref{defphib}: 
$${\Phi_b}_{|_{b=0}}=\Phi$$
and
$$\frac{\pa \Phi_b}{\pa b}=-\frac{\pa_b\mu_b}{\mu_b}\frac{1}{\mu_b^{\frac 2{p-1}}}\Lambda_r\Phi\left(\frac{r}{\mu_b}\right)=-\frac{z^2}{2\mu_b^2}\frac{1}{\mu_b^{\frac 2{p-1}}}\Lambda_r\Phi\left(\frac{r}{\mu_b}\right),\,\,\,\, \mu_b=\sqrt{1+bz^2},$$ 
and hence $$\frac{\pa \Phi_b}{\pa b}_{|b=0}=-\frac{z^2}{2}\Lambda_r \Phi=-\frac 12(P_2+2P_0)(z)\Lambda_r\Phi$$ where we used from \eqref{hermitepolynomial}: $$P_2(z)=z^2-2, \ \ P_0(z)=1.$$ 
Moreover, we have
$$v_b(z)=V_b(z),\,\,\,\, \pr_bv_b(z)=\pr_bV_b(Z)+\frac{1}{2b}Z\partial_ZV_b(Z)$$
which together with \eqref{definitionv}, \eqref{degeneracy} yields\footnote{Recall that $Z=\sqrt{b}z$.}
$$(\tilde{v}_{b})_{|b=0}=(\pa_b\tilde{v}_b)_{|b=0}=0.$$ 
Hence, we infer
\bee
(\Phit_b)_{|b=0}=\Phi, \ \ \frac{\pa \Phit_b}{\pa b}_{|b=0}=-\frac 12(P_2+2P_0)(z)\Lambda_r\Phi
\eee
which is  \eqref{deriveebzeroprofil}. This concludes the proof of the lemma.
\end{proof}


\section{The bootstrap argument}



\subsection{Setting of the bootstrap}


We set up in this section the bootstrap analysis of the flow for a suitable set of finite energy initial data. The solution will be decomposed in a suitable geometrical way using by now standards arguments, see \cite{mamerle, meraphannals}.\\

\noindent { \em Geometrical decomposition of the flow}. We start by showing the existence of the suitable decomposition. 

\begin{lemma}[Geometrical decomposition] \label{lemma:decompositionofinitialdata}
There exists $\hat{b}>0$ and $\kappa>0$ small enough such if 
$$0<\underline{b}\leq\hat{b}\textrm{ and }\|w\|_{L^\infty}\leq \kappa,$$
and
$$u=\Phit_{\underline{b}}+w,$$ 
then $u$ has a unique decomposition
$$
u=\frac{1}{\mu^{\frac{2}{p-1}}}\left(\Phit_{b}+\sum_{j=-\ell_0}^{-2}\sum_{M=0}^{M(j)}a_{j,M}\phi_{j,2M}+\varepsilon\right)\left(\frac{x}{\mu} \right),
$$
where $\varepsilon$ satisfies the orthogonality conditions
$$
(\e,\phi_{j,2M})_{L^2_{\rho_Y}}=0, \ \ -\ell_0\leq j\leq -1, \ \ 0\leq M\leq M(j),
$$
and with
\be\label{eq:bound continuite decomposition}
|\mu-1|+|b-\underline{b}|+\sum_{j=-\ell_0}^{-2}\sum_{M=0}^{M(j)} |a_{j,M}| \lesssim \|w\|_{L^\infty}.
\ee
Furthermore, for $K$ such that
\bea\label{eq:necessaryconditionK}
K\geq 1+\max_{-\ell_0\leq j\leq -1}M(j),
\eea
and $q$ such that
\bea\label{eq:necessaryconditionq}
q>1\,\,\,\,\textrm{ and }\,\,\,\,\frac{q+1}{p-1}>2,
\eea
we have 
\bea\label{eq:bound continuite decompositionbis}
\nonumber&&\| \e \|_{H^2_{\rho_Y}}+\|\nabla\e\|_{L^{2q+2}_{\rho_Y}} +\left(\int\frac{\ep^2}{1+z^{2K}}\rho_rdY\right)^{\frac{1}{2}}+\left(\int\frac{|\nabla\ep|^{2q+2}}{1+z^{2K}}\rho_rdY\right)^{\frac{1}{2q+2}}+\|v\|_{W^{1,2q+2}}\\
&\lesssim& \underline{b}^{-\frac{5}{4}}(\|w\|_{H^2}+\|w\|_{W^{1,2q+2}})
\eea
where
$$v=\sum_{j=-\ell_0}^{-2}\sum_{M=0}^{M(j)}a_{j,M}\phi_{j,2M}+\varepsilon.$$
\end{lemma}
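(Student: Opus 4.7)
The plan is to prove the lemma by the implicit function theorem for the modulation parameters, followed by direct Sobolev bookkeeping for the finer norm bounds. Parametrize the decomposition through the defining identity
$$\e(Y) = \mu^{2/(p-1)} u(\mu Y) - \Phit_b(Y) - \sum_{j=-\ell_0}^{-2}\sum_{M=0}^{M(j)} a_{j,M}\phi_{j,2M}(Y),$$
and consider the map $F : (\mu, b, \{a_{j,M}\}, u) \mapsto \big((\e, \phi_{j',2M'})_{L^2_{\rho_Y}}\big)_{-\ell_0\leq j'\leq -1,\, 0\leq M'\leq M(j')}$. At the reference point $(1, \underline{b}, 0, \Phit_{\underline{b}})$, one has $\e \equiv 0$ and $F$ vanishes trivially.

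To apply the IFT I would verify invertibility of the $(\mu, b, \{a_{j,M}\})$-Jacobian. By Lemma \ref{lemmaspectraltwo}, the eigenfunctions $\phi_{j,2M}$ are $L^2_{\rho_Y}$-orthogonal, so the block of derivatives in the $a_{j,M}$ for $j\leq -2$ is diagonal with nonvanishing entries $-\|\phi_{j,2M}\|^2_{L^2_{\rho_Y}}$. The remaining $2\times 2$ block couples $(\mu, b)$ to the orthogonality against $\phi_{-1,0}=\psi_{-1}P_0$ and $\phi_{-1,2}=\psi_{-1}P_2$, which are exactly the two nonpositive modes for $j=-1$ since $\lambda_{-1}=-1$ forces $M(-1)=1$. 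Evaluating at $\underline{b}=0$ one computes
$$\partial_\mu\e|_{0} = -\Lambda_Y\Phi = -\Lambda_r\Phi(r)\,P_0(z),\qquad \partial_b\e|_{0} = \tfrac{1}{2}(P_2+2P_0)(z)\,\Lambda_r\Phi(r),$$
the second identity coming from \eqref{deriveebzeroprofil}. Hermite orthogonality $(P_0,P_2)_{L^2_{\rho_z}}=0$ then renders this block upper triangular with diagonal entries proportional to $\|\Lambda_r\Phi\|^2_{L^2(r^2\rho_r dr)}\|P_0\|^2_{L^2_{\rho_z}}$ and $\tfrac{1}{2}\|\Lambda_r\Phi\|^2_{L^2(r^2\rho_r dr)}\|P_2\|^2_{L^2_{\rho_z}}$, both strictly positive. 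Invertibility persists by continuity for $\underline{b}$ sufficiently small, the IFT yields the unique decomposition smoothly depending on $u$, and the $L^\infty$-Lipschitz control \eqref{eq:bound continuite decomposition} follows from the bound on $\|DF^{-1}\|$.

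For the refined estimate \eqref{eq:bound continuite decompositionbis}, substitute $u = \Phit_{\underline{b}}+w$ to obtain the reconstruction formula
$$v(Y) = \mu^{2/(p-1)}\Phit_{\underline{b}}(\mu Y) - \Phit_b(Y) + \mu^{2/(p-1)}w(\mu Y).$$
The rescaled-$w$ term is estimated by unweighted Sobolev norms directly. The difference $\mu^{2/(p-1)}\Phit_{\underline{b}}(\mu Y) - \Phit_b(Y)$ is a smooth two-parameter family whose $(\mu,b)$-derivatives are explicit from Lemmas \ref{reconnectingprofiles}--\ref{lemmaapproximatelocalized}; after invoking the control on $|\mu-1|+|b-\underline{b}|$ already secured in step one, its various norms reduce to pointwise bounds on $\Phit_b$ and $\partial_b\Phit_b$. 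Assumption \eqref{eq:necessaryconditionK} on $K$ guarantees the polynomial $z$-weight $1/(1+z^{2K})$ absorbs the Hermite polynomials appearing in the projections onto $\phi_{j,2M}$, while \eqref{eq:necessaryconditionq} supplies the Sobolev embedding needed for the $\nabla\e$ piece. The main technical obstacle, and the origin of the $\underline{b}^{-5/4}$ loss, is that $z$-derivatives applied to $\Phit_{\underline{b}}$ produce factors of $\sqrt{\underline{b}}$ through the chain rule $Z=\sqrt{\underline{b}}\,z$, which interact unfavorably with the polynomial weights on the support $|Z|\lesssim\delta$; careful bookkeeping of these rescaled weights, together with the $\sqrt{\underline{b}}$-volume factor of the support in $z$, produces the stated loss and completes the proof.
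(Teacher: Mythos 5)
Your first step is essentially the paper's proof: the same implicit function theorem setup, the same observation that the $(\mu,b)$-derivatives of $\e$ at $b=0$ are of the form (Hermite polynomial in $z$) times $\Lambda_r\Phi(r)$, so that only the $j=-1$ orthogonality conditions are affected, and the same nondegenerate $2\times 2$ block whose determinant is $\tfrac12\|P_0\|_{L^2_{\rho_z}}\|P_2\|_{L^2_{\rho_z}}\|\Lambda_r\Phi\|_{L^2_{\rho_r}}^2\neq 0$ thanks to \eqref{deriveebzeroprofil} and $(P_0,P_2)_{L^2_{\rho_z}}=0$ (your triangular form is the paper's matrix with the columns ordered differently; the sign slip $\partial_\mu\e|_0=-\Lambda_Y\Phi$ instead of $+\Lambda_Y\Phi$ is immaterial for invertibility). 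Continuity in $\underline{b}$ and the quantitative IFT then give \eqref{eq:bound continuite decomposition} exactly as in the paper, and your reconstruction formula $v=\mu^{2/(p-1)}\Phit_{\underline{b}}(\mu Y)-\Phit_b(Y)+\mu^{2/(p-1)}w(\mu Y)$ is precisely the paper's starting point for \eqref{eq:bound continuite decompositionbis} (there the profile difference is written in integral form).

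The one genuine defect is your accounting for the loss $\underline{b}^{-5/4}$, which as stated would not produce the exponent. Chain-rule factors $\partial_z=\sqrt{\underline{b}}\,\partial_Z$ are \emph{gains}, not losses, and nothing in your mechanism generates a full power $\underline{b}^{-1}$. The correct bookkeeping, which is what the paper does, is: (i) $|\partial_b\Phit_b|\lesssim \underline{b}^{-1}(\la r\ra+|Z|)^{-(\frac{2}{p-1}-\frac1n)}$, because $\partial_b$ acts on functions of $Z=\sqrt{b}z$ as $\frac{1}{2b}Z\partial_Z$ (equivalently $\partial_b\Phi_b\sim \frac{z^2}{2\mu_b^2}\cdot(\cdots)$); this is the source of the dominant $\underline{b}^{-1}$, multiplying $|b-\underline{b}|\lesssim\|w\|_{L^\infty}$ from step one; (ii) the norms in \eqref{eq:bound continuite decompositionbis} that carry no Gaussian weight in $z$ (the $\nu_K$-weighted norms and the global $W^{1,2q+2}$ norm) cost an additional fractional power, at most $\underline{b}^{-1/4}$ (resp.\ $\underline{b}^{-1/(4q+4)}$), when a bounded function of $Z$ is integrated in $z$; this is exactly where \eqref{eq:necessaryconditionq} together with $n\geq p$ enters, namely to ensure $(2q+2)(\frac{2}{p-1}-\frac1n)>4$ so that $\|(\la r\ra+|Z|)^{-(\frac{2}{p-1}-\frac1n)}\|_{L^{2q+2}(\Bbb R^4)}\lesssim \underline{b}^{-\frac{1}{4q+4}}$ — not to provide a Sobolev embedding for the $\nabla\e$ piece (the embedding $W^{1,2q+2}(\Bbb R^4)\hookrightarrow L^\infty$, which needs $q>1$, is used only to convert $\|w\|_{L^\infty}$ into the right-hand side of \eqref{eq:bound continuite decompositionbis}). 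Combining (i) and (ii) yields the paper's intermediate bound $\underline{b}^{-1/4}|\mu-1|+\underline{b}^{-5/4}|b-\underline{b}|$ and hence the claim; with your mechanism alone one would only reach a loss of order $\underline{b}^{-1/4}$ for the $b$-direction and the estimate would not be justified. Apart from this misattribution, and the fact that the final computation is only sketched, your route is the same as the paper's.
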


\begin{proof}
It is a classical consequence of the implicit function theorem. \\

\noindent\textbf{step 1} Existence of the decomposition of $U$ and proof of \eqref{eq:bound continuite decomposition}. We introduce the smooth maps
$$F\Big(w, \mu, b, (a_{j,M})_{-\ell_0\leq j\leq -2, \,0\leq M\leq M(j)}\Big)=\mu^{\frac{2}{p-1}}\left(\Phit_{\underline{b}}+w\right)(\mu x) -\Phit_b - \sum_{j=-\ell_0}^{-2}\sum_{M=0}^{M(j)}a_{j,M}\phi_{j,2M}$$
and
$$G = \Big((F, \phi_{j,M})_{L^2_{\rho_Y}},\, -\ell_0\leq j\leq -1, \,0\leq M\leq M(j)\Big).$$ 
We immediately check that $G(0, 1, \underline{b},\ldots, 0)=0$. Also, from \eqref{deriveebzeroprofil}, \eqref{formulapsiminustow} and Lemma \ref{lemmaspectraltwo}, we have 
$$\left(\Lambda\Phi_r,\phi_{j,2M}\right)_{L^2_{\rho_Y}}=\left(\frac{\pa \Phit_b}{\pa b}_{|b=0},\phi_{j,2M}\right)_{L^2_{\rho_Y}}=0, \ \ -\ell_0\leq j\leq -2, \ \ 0\leq M\leq M(j),$$
and hence, we deduce that
$$\frac{\pr G}{\pr ( \mu, b, (a_{j,M})_{-\ell_0\leq j\leq -2, \,0\leq M\leq M(j)})}_{|_{(0, 1, 0, \ldots, 0)}}
=\left(\begin{array}{cc}A & 0\\
 0 & I
 \end{array}\right)$$
where $I$ is the $N$ by $N$ identity matrix with the integer $N$ is given by
$$N=\sum_{j=-\ell_0}^{-1}(1+M(j))$$
and where $A$ is the following 2 by 2 matrix
$$\left(\begin{array}{cc} (\frac{\pa \Phit_b}{\pa b}_{|b=0},\phi_{-1,0})_{L^2_{\rho_Y}} & (\Lambda_r \Phi,\phi_{-1,0})_{L^2_{\rho_Y}}\\  (\frac{\pa \Phit_b}{\pa b}_{|b=0},\phi_{-1,2})_{L^2_{\rho_Y}} & (\Lambda_r \Phi,\phi_{-1,2})_{L^2_{\rho_Y}}\end{array}\right).$$
Since we have 
\bee
|A| &=&\frac{1}{\|P_0\|_{L^2_{\rho_z}}\|P_2\|_{L^2_{\rho_z}}\|\Lambda_r\Phi\|_{L^2_{\rho_r}}^2}\left|\begin{array}{cc} \left(-\frac 12(P_2+2P_0)(z)\Lambda_r\Phi, P_0\Lambda_r\Phi\right)_{L^2_{\rho_Y}} & (\Lambda_r \Phi, P_0\Lambda_r\Phi)_{L^2_{\rho_Y}}\\  \left(-\frac 12(P_2+2P_0)(z)\Lambda_r\Phi,P_2\Lambda_r\Phi\right)_{L^2_{\rho_Y}} &0\end{array}\right|\\
&=&\frac 12\|P_0\|_{L^2_{\rho_z}}\|P_2\|_{L^2_{\rho_z}}\|\Lambda_r\Phi\|_{L^2_{\rho_r}}^2\neq 0,
\eee
we deduce that 
$$\frac{\pr G}{\pr ( \mu, b, (a_{j,M})_{-\ell_0\leq j\leq -2, \,0\leq M\leq M(j)})}_{|_{(0, 1, 0, \ldots, 0)}}$$
is invertible. Since $0<\underline{b}\leq\hat{b}\ll 1$, we infer by continuity and the fact that the set of invertible matrices is open that 
$$\frac{\pr G}{\pr ( \mu, b, (a_{j,M})_{-\ell_0\leq j\leq -2, \,0\leq M\leq M(j)})}_{|_{(0, 1, \underline{b},0, \ldots, 0)}}$$
is invertible. In view of the implicit function theorem, for $\kappa>0$ small enough, for any 
$$
\| w \|_{L^{\infty}}\leq \kappa
$$
there exists $(\mu , b, (a_{j,M})_{-\ell_0\leq j\leq -2, \,0\leq M\leq M(j)})$ and
$$\ep =F\Big(w, \mu, b, (a_{j,M})_{-\ell_0\leq j\leq -2, \,0\leq M\leq M(j)}\Big)$$
such that
$$u =\Phit_{\underline{b}}+w= \frac{1}{\mu ^{\frac{2}{p-1}}}\left(\Phit_b+\sum_{j=-\ell_0}^{-2}\sum_{M=0}^{M(j)}a_{j,M}\phi_{j,2M}+\ep \right)\left(\frac{x}{\mu}\right),$$
$$
(\e,\phi_{j,2M})_{L^2_{\rho_Y}}=0, \ \ -\ell_0\leq j\leq -1, \ \ 0\leq M\leq M(j),
$$
and the estimate \eqref{eq:bound continuite decomposition} holds true for the parameters, i.e.
\bee
|\mu-1|+|b-\underline{b}|+\sum_{j=-\ell_0}^{-2}\sum_{M=0}^{M(j)} |a_{j,M}| \lesssim \|w\|_{L^\infty}.
\eee

\noindent\textbf{step 2} Proof of \eqref{eq:bound continuite decompositionbis}. Recall that we have defined $\ep$ as
$$\ep=\mu^{\frac{2}{p-1}}\left(\Phit_{\underline{b}}+w\right)(\mu x) -\Phit_b - \sum_{j=-\ell_0}^{-2}\sum_{M=0}^{M(j)}a_{j,M}\phi_{j,2M}.$$
We infer
\bee
\ep &=& \tilde{\ep}+\mu^{\frac{2}{p-1}}w(\mu Y)  - \sum_{j=-\ell_0}^{-2}\sum_{M=0}^{M(j)}a_{j,M}\phi_{j,2M}.
\eee
where we have introduced the notation 
\bee
\tilde{\ep} &=& (\mu-1)\int_0^1(1+\sigma(\mu-1))^{\frac{2}{p-1}}\Lambda_Y\Phit_{\tilde{b}}((1+\sigma(\mu-1))Y)d\sigma \\
&& + (b-\underline{b})\int_0^1\partial_b\Phit_{\underline{b}+\sigma(b-\underline{b})}(Y)d\sigma.
\eee
We estimate
\bee
&&\| \e \|_{H^2_{\rho_Y}}+\|\nabla\e\|_{L^{2q+2}_{\rho_Y}} +\left(\int\frac{\ep^2}{1+z^{2K}}\rho_rdY\right)^{\frac{1}{2}}+\left(\int\frac{|\nabla\ep|^{2q+2}}{1+z^{2K}}\rho_rdY\right)^{\frac{1}{2q+2}}+\|v\|_{W^{1,2q+2}}\\
&\lesssim& \| \tilde{\ep} \|_{H^2_{\rho_Y}}+\|\nabla\tilde{\ep}\|_{L^{2q+2}_{\rho_Y}} +\left(\int\frac{\tilde{\ep}^2}{1+z^{2K}}\rho_rdY\right)^{\frac{1}{2}}+\left(\int\frac{|\nabla\tilde{\ep}|^{2q+2}}{1+z^{2K}}\rho_rdY\right)^{\frac{1}{2q+2}}+\|\tilde{\ep}\|_{W^{1,2q+2}}\\
&& +\|w\|_{H^2}+\|w\|_{W^{1,2q+2}}+ \sum_{j=-\ell_0}^{-2}\sum_{M=0}^{M(j)}|a_{j,M}|,
\eee
where we used the fact that for $-\ell_0\leq j\leq -2$ and $0\leq M\leq M(j)$, we have
\bee
&&\left(\int\frac{\phi_{j,2M}^2}{1+z^{2K}}\rho_rdY\right)^{\frac{1}{2}}+\left(\int\frac{|\nabla\phi_{j,2M}|^{2q+2}}{1+z^{2K}}\rho_rdY\right)^{\frac{1}{2q+2}}\\
 &\lesssim& \left(\int\frac{P_{2M}^2}{1+z^{2K}}dz\right)^{\frac{1}{2}}+\left(\int\frac{(P_{2M}')^{2q+2}}{1+z^{2K}}dz\right)^{\frac{1}{2q+2}}\lesssim  1
\eee
in view of the choice
$$K\geq 1+\max_{-\ell_0\leq j\leq -1}M(j).$$
Together with the estimate for $a_{j,M}$ derived in step 1, we infer
\bee
&&\| \e \|_{H^2_{\rho_Y}}+\|\nabla\e\|_{L^{2q+2}_{\rho_Y}} +\left(\int\frac{\ep^2}{1+z^{2K}}\rho_rdY\right)^{\frac{1}{2}}+\left(\int\frac{|\nabla\ep|^{2q+2}}{1+z^{2K}}\rho_rdY\right)^{\frac{1}{2q+2}}+\|v\|_{W^{1,2q+2}}\\
&\lesssim& \| \tilde{\ep} \|_{H^2_{\rho_Y}}+\|\nabla\tilde{\ep}\|_{L^{2q+2}_{\rho_Y}} +\left(\int\frac{\tilde{\ep}^2}{1+z^{2K}}\rho_rdY\right)^{\frac{1}{2}}+\left(\int\frac{|\nabla\tilde{\ep}|^{2q+2}}{1+z^{2K}}\rho_rdY\right)^{\frac{1}{2q+2}}+\|\tilde{\ep}\|_{W^{1,2q+2}}\\
&& +\|w\|_{H^2}+\|w\|_{W^{1,2q+2}}
\eee
where we used the fact that $q>1$ and the Sobolev embedding in $\Bbb R^4$ in the last inequality. 

We still need to estimate $\tilde{\ep}$. We have
\bee
\Lambda_Y\Phi_b(Y)= \frac{1-Z^2}{1+Z^2}\frac{1}{\mu^{\frac{2}{p-1}}}\Lambda_r\Phi\left(\frac{r}{\mu}\right),\ \pr_b\Phi_b=-\frac{1}{b}\frac{Z^2}{1+Z^2}\frac{1}{\mu^{\frac{2}{p-1}}}\Lambda_r\Phi\left(\frac{r}{\mu}\right),
\eee
which together with the decay of $\Phi$, the fact that $\Phit_b=\Phi_b+\tilde{v}_b$ and the estimates for $\tilde{v}_b$ yields
\bee
|\pr^j_r\pr^k_Z\Lambda_Y\Phit_b(Y)|\lesssim \frac{1}{(\la r\ra+|Z|)^{\frac{2}{p-1}-\frac{1}{n}}},\,\,\,\, |\pr^j_r\pr^k_Z\pr_b\Phit_b(Y)|\lesssim \frac{1}{b(\la r\ra+|Z|)^{\frac{2}{p-1}-\frac{1}{n}}}
\eee
In view of the definition of $\tilde{\ep}$, we infer
\bee
&& \| \tilde{\ep} \|_{H^2_{\rho_Y}}+\|\nabla\tilde{\ep}\|_{L^{2q+2}_{\rho_Y}} +\left(\int\frac{\tilde{\ep}^2}{1+z^{2K}}\rho_rdY\right)^{\frac{1}{2}}+\left(\int\frac{|\nabla\tilde{\ep}|^{2q+2}}{1+z^{2K}}\rho_rdY\right)^{\frac{1}{2q+2}}+\|\tilde{\ep}\|_{W^{1,2q+2}}\\
 &\lesssim& \underline{b}^{-\frac{1}{4}}|\mu-1|+\underline{b}^{-\frac{5}{4}}|b-\underline{b}|
 \eee
 where we have used for the last term the fact that in view of $n\geq p$ and \eqref{eq:necessaryconditionq}, we have 
 $$(2q+2)\left(\frac{2}{p-1}-\frac{1}{n}\right)>4,$$
 so that we have in view of $Z=\sqrt{b}z$
 $$\left\|\frac{1}{(\la r\ra+|Z|)^{\frac{2}{p-1}-\frac{1}{n}}}\right\|_{L^{2q+2}}\lesssim \underline{b}^{-\frac{1}{4q+4}}.$$
 Together with the estimate for the parameters $b$ and $\mu$, we infer
\bee
&& \| \tilde{\ep} \|_{H^2_{\rho_Y}}+\|\nabla\tilde{\ep}\|_{L^{2q+2}_{\rho_Y}} +\left(\int\frac{\tilde{\ep}^2}{1+z^{2K}}\rho_rdY\right)^{\frac{1}{2}}+\left(\int\frac{|\nabla\tilde{\ep}|^{2q+2}}{1+z^{2K}}\rho_rdY\right)^{\frac{1}{2q+2}}+\|\tilde{\ep}\|_{W^{1,2q+2}}\\
 &\lesssim& \underline{b}^{-\frac{5}{4}}\|w\|_{L^\infty}.
 \eee 
 Coming back to $\ep$, we deduce
 \bee
&&\| \e \|_{H^2_{\rho_Y}}+\|\nabla\e\|_{L^{2q+2}_{\rho_Y}} +\left(\int\frac{\ep^2}{1+z^{2K}}\rho_rdY\right)^{\frac{1}{2}}+\left(\int\frac{|\nabla\ep|^{2q+2}}{1+z^{2K}}\rho_rdY\right)^{\frac{1}{2q+2}}+\|v\|_{W^{1,2q+2}}\\
&\lesssim& \underline{b}^{-\frac{5}{4}}(\|w\|_{H^2}+\|w\|_{W^{1,2q+2}})
\eee
 which is \eqref{eq:bound continuite decompositionbis}. This concludes the proof of the lemma.
\end{proof}

\noindent { \em Description of the initial data}. We now pick an initial data close to $\Phit_b$ up to scaling, where $\Phit_b$ has been constructed in Lemma \ref{lemmaapproximatelocalized}, and assume in the coordinate of the above geometrical decomposition
\bea\label{geomdeopcopmtiinit}
u_0=\frac{1}{\l_0^{\frac{2}{p-1}}}\Big(\Phit_{b_0}+v_0\Big)\left(\frac{x}{\l_0}\right)
\eea
with
\bea\label{orhotpsininit}
v_0=\psi_0+\e_0, \ \ \psi_0=\sum_{j=-\ell_0}^{-2}\sum_{M=0}^{M(j)}(a_{j,M})_0\phi_{j,2M}(Y)
\eea
and $\e_0$ satisfies the following orthogonality conditions
\be
\label{orthoeinit}
(\e_0,\phi_{j,2M})_{L^2_{\rho_Y}}=0, \ \ -\ell_0\leq j\leq -1, \ \ 0\leq M\leq M(j).
\ee
Let $K>0$ be a large enough universal constant such that in particular \eqref{eq:necessaryconditionK} holds true, and define
\be
\label{defchi}
\nu_K(z)=\frac{1}{1+z^{2K}}
\ee 
Let a large enough integer $q$ such that in particular \eqref{eq:necessaryconditionq} holds true, and pick $n\geq n(K)$ large enough and $s_0>s_0(n,K)$ large enough. Pick parameters $\lambda_0,b_0,(a_{j,M})_0$ and a profile $\e_0$ which satisfy the initial bounds:

\begin{itemize}
\item rescaled solution: 
\be
\label{scalingsmall}
\l_0 = e^{-\frac{s_0}2};
\ee
\item control of the $b$ parameter:
\be
\label{bparme}
b_0=\frac{1}{c_1s_0}
\ee
where the constant $c_1>0$ is given by \eqref{calculloi};

\vspace{0.2cm}

\item initial control of the unstable modes:
\be
\label{nvnono}
\sum_{j=-\ell_0}^{-2}\sum_{M=0}^{M(j)}|a_{j,M}(0)|^2\leq  \frac{1}{s_0^n};
\ee
\item initial control of the exponentially localized norm: 
\be
\label{poitwiseboundhtwoinitial}
\|\e_0\|_{H^2_{\rho_Y}}+\|\nabla\e_0\|_{L^{2q+2}_{\rho_Y}}<\frac{1}{s_0^{n}};
\ee
\item control of polynomially localized norms:
\be
\label{cekneoneoncoenoinitial}
\int \nu_K\e_0^2\rho_rdY\leq \frac{1}{s_0^{2K}}, \ \ \int \nu_K|\nabla\e_0|^{2q+2}\rho_rdY\leq\frac{1}{s_0^{2q+2K}};
\ee
\item initial control of the global $W^{1,2q+2}$ norm: 
\be
\label{controlsobolevinitial}
\|v_0\|_{W^{1,2q+2}}<\frac{1}{s_0}.
\ee
\end{itemize}

\begin{remark}
Note that the above properties of the initial data $u_0$ can be obtained by applying Lemma \ref{lemma:decompositionofinitialdata} to an initial data of the form
\be\label{eq:theinitialformoftheinitialdata}
u_0=\Phit_{\underline{b}_0}+w_0
\ee
where $\Phit_b$ has been constructed in Lemma \ref{lemmaapproximatelocalized} and where
\be\label{eq:theinitialformoftheinitialdatabis}
0<\underline{b}_0\ll 1\,\,\,\,\textrm{ and }\,\,\,\,\|w_0\|_{W^{1,2q+2}}+\|w_0\|_{H^2}\leq \underline{b}_0^{2n}.
\ee
Indeed, the decomposition \eqref{geomdeopcopmtiinit} \eqref{orhotpsininit} \eqref{orthoeinit} immediately follows from Lemma \ref{lemma:decompositionofinitialdata}. Then, we may choose $s_0$ as
$$s_0=\frac{1}{c_1b_0}$$
so that \eqref{bparme} holds true. In view of our assumptions on $w_0$, this yields in particular
$$\|w_0\|_{W^{1,2q+2}}+\|w_0\|_{H^2}\lesssim \frac{1}{s_0^{2n}},$$
and the estimates \eqref{nvnono} \eqref{poitwiseboundhtwoinitial} \eqref{cekneoneoncoenoinitial} \eqref{controlsobolevinitial} immediately follow from the bounds \eqref{eq:bound continuite decomposition} \eqref{eq:bound continuite decompositionbis}. Finally, we may always renormalize the initial data to enforce \eqref{scalingsmall}. 
\end{remark}

\noindent{\em Renormalized flow}. From a standard continuity in time argument, as long as the solution remains close to $\Phi$ up to scaling in $L^2_{\rho_Y}$, we may introduce the time dependent geometrical decomposition
\be
\label{geomdeopcopmti}
\left|\begin{array}{ll}u(t,x)=\frac{1}{\l(t)^{\frac{2}{p-1}}}U(s,Y), \ \ Y=\frac{x}{\l(t)}\\
 \  \  U=\Phit_{b(t)}+v, \ \ v=\psi+\e
 \end{array}\right.
\ee 
with 
\be
\label{orhotpsin}
\psi=\sum_{j=-\ell_0}^{-2}\sum_{M=0}^{M(j)}a_{j,M}(t)\phi_{j,2M}(Y)
\ee 
and 
\be
\label{orthoe}
(\e(t),\phi_{j,2M})_{L^2_{\rho_Y}}=0, \ \ -\ell_0\leq j\leq -1, \ \ 0\leq M\leq M(j).
\ee
The above decomposition is continuously differentiable with respect to time from standard parabolic regularizing effects. Consider the renormalized time 
\be
\label{defrescaledtime}
s(t)=\int_0^t\frac{d\tau}{\l^2(\tau)}+s_0,
\ee
then from \eqref{geomdeopcopmti}: $$\pa_sU-\lsl\Lambda U=\Delta U+U^p$$ which together with \eqref{definitionv}, \eqref{reconnectingequations} yields the $v$ equation:
\bea
\label{vequationbizarre}
\nonumber&& (b_s+bB(b))\pa_b\Phit_b-\left(\lsl+\frac 12-M(b)\right)\Lamdba \Phit_b+\pa_sv+\L v\\
&=& \Psit_b+\left(\lsl+\frac 12\right)\Lambda v+F(v)
\eea
where
\be
\label{defnlv}
F(v)=F_1+F_2,\ \ \left|\begin{array}{ll}F_1=p(\Phit_b^{p-1}-\Phi^{p-1})v, \\ F_2=(\Phit_b+v)^p-\Phit_b^p-p\Phit_b^{p-1}v.\end{array}\right.
\ee
We may equivalently develop $v=\psi+\e$ and obtain the $\e$ equation:
\be
\label{defequation}
\pa_s\e+\L\e=\Psit_b-\Mod+L(\e)+F(v)
\ee
where $\Mod$ encodes the modulation equations 
\bea
\label{modulation}
\nonumber\Mod&=&\sum_{j=-\ell_0}^{-2}\sum_{M=0}^{M(j)}\left[(a_{j,M})_s+(\l_j+M)a_{j,M}\right]\phi_{j,2M}-\left(\lsl+\frac 12\right)\Lambda \psi\\
 & -& \left(\lsl+\frac 12-M(b)\right)\Lambda \Phit_b+(b_s+bB(b))\pa_b\Phit_b
\eea
and we defined the linear error
\be
\label{linearLe}
L(\e)=\left(\lsl+\frac 12\right)\Lambda \e.
\ee
We claim the following bootstrap proposition.

\begin{proposition}[Bootstrap]
\label{bootstrap}
Given $q$ large enough satisfying in particular \eqref{eq:necessaryconditionq}, $K\geq K(q)$ large enough satisfying in particular \eqref{eq:necessaryconditionK}, $n\geq n(K,q)$ large enough and $s_0(n,K,q)$ large enough, then forall  $\lambda_0,b_0,\e_0$ satisfying \fref{scalingsmall}, \eqref{bparme}, \eqref{poitwiseboundhtwoinitial}, \eqref{cekneoneoncoenoinitial}, \eqref{controlsobolevinitial} and the orthogonality conditions \eqref{orthoeinit}, there exist $(a_{j,M}(0))_{-\ell_0\leq j\leq-2, 0\leq M\leq M(j)}$ satisfying  \eqref{nvnono} such that the solution starting from $u_0$ given by \fref{geomdeopcopmtiinit}, decomposed according to \eqref{geomdeopcopmti} satisfies for all $s\geq s_0$:
\begin{itemize}
\item control of the scaling:
\be
\label{controlsclaing}
0<\l(s)< e^{-\frac s4};
\ee
\item control of the $b$ parameter:
\be
\label{bparmebis}
\frac{1}{10c_1s}<b(s)<\frac{10}{c_1s};
\ee
\item control of the unstable modes:
\be
\label{controlunstable}
\sum_{j=-\ell_0}^{-2}\sum_{M=0}^{M(j)}|a_{j,M}(s)|^2\leq \frac{1}{s^{n}};
\ee
\item control of the exponentially localized norm: 
\be
\label{poitwiseboundhtwo}
\|\e(s)\|_{H^2_\rho}<\frac{1}{s^{\frac{n}{2}}}
\ee
and 
\be
\label{neneoneovneoneov}
\|\nabla\e\|_{L^{2q+2}_{\rho_Y}}<\frac{1}{s^{\frac n2}};
\ee
\item control of polynomially localized norms:
\be
\label{cekneoneoncoeno}
\int \nu_K|\e(s)|^2\rho_rdY\leq \frac{1}{s^{K+1}}, \ \ \int \nu_K|\nabla\e(s)|^{2q+2}\rho_rdY\leq\frac{1}{s^{2q+K+1}};
\ee
\item control of the global $W^{1,2q+2}$ norm: 
\be
\label{controlsobolev}
\|v(s)\|_{W^{1,2q+2}}<\frac{1}{s^{\delta_q}}
\ee
for some small enough $\delta_q>0$.
\end{itemize}
\end{proposition}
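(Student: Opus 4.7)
The plan is a standard bootstrap-plus-topological-argument. I would define $s^*$ as the supremum of $s\geq s_0$ on which the decomposition \eqref{geomdeopcopmti}--\eqref{orthoe} exists and all bounds \eqref{controlsclaing}--\eqref{controlsobolev} hold with the stated constants. Assuming $s^*<+\infty$, the goal is to improve every bound strictly, except possibly the finitely many unstable modes $(a_{j,M})_{-\ell_0\le j\le -2}$ controlled in \eqref{controlunstable}; those will be handled by a Brouwer-type argument choosing the initial data $(a_{j,M}(0))$, exploiting the fact that the linearized flow at these modes is strictly outgoing (eigenvalues $\lambda_j+M<0$). The local existence of the decomposition on $[s_0,s^*)$ follows from Lemma \ref{lemma:decompositionofinitialdata} and standard parabolic regularity, together with the orthogonality conditions \eqref{orthoe}, which are enforced via the implicit function theorem as in the proof of Lemma \ref{lemma:decompositionofinitialdata}.

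\textbf{Modulation equations.} Taking the $L^2_{\rho_Y}$-inner product of the $\varepsilon$-equation \eqref{defequation} with each $\phi_{j,2M}$ for $-\ell_0\le j\le -1$, $0\le M\le M(j)$, and using the orthogonality \eqref{orthoe} plus \eqref{deriveebzeroprofil}, yields an algebraic system for the parameters $(\lambda_s/\lambda+\tfrac12)$, $(b_s+bB(b))$ and $(a_{j,M})_s+(\lambda_j+M)a_{j,M}$. The matrix is a small perturbation of the invertible matrix $A$ computed in the proof of Lemma \ref{lemma:decompositionofinitialdata}, so inversion gives $|b_s+bB(b)|+|\lambda_s/\lambda+M(b)+\tfrac12|$ bounded by the size of $\Psit_b$ tested against eigenfunctions plus nonlinear and linear error terms. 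Together with \eqref{calculloi}, the leading law $b_s=-c_1b^2+O(b^{5/2})$ integrates to give \eqref{bparmebis} strictly, and then $\lambda_s/\lambda\sim-1/2$ integrates to \eqref{controlsclaing} strictly via \eqref{scalingsmall}. For the unstable modes one obtains $(a_{j,M})_s+(\lambda_j+M)a_{j,M}=O(b^{n+1})+O(\|\varepsilon\|)$, i.e. an ODE with strictly negative linear part.

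\textbf{Energy estimates for $\varepsilon$.} The exponentially weighted bounds \eqref{poitwiseboundhtwo}--\eqref{neneoneovneoneov} follow by testing \eqref{defequation} with $\mathcal L_Y\varepsilon\,\rho_Y$ (and its $W^{1,2q+2}_{\rho_Y}$ analogue) and using the spectral gap \eqref{spectralgapestimate} of Lemma \ref{lemmaspectraltwo}: the orthogonality \eqref{orthoe} lifts the gap above zero, yielding an estimate $\tfrac{d}{ds}\|\varepsilon\|_{H^2_{\rho_Y}}^2+c\|\varepsilon\|_{H^2_{\rho_Y}}^2\lesssim \|\Psit_b\|^2+\text{nonlinear}$, and the source $\|\Psit_b\|^2\lesssim b^{2n+2}\lesssim s^{-2n-2}$ by \eqref{esterrorinomegabis}. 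The polynomial weight estimates \eqref{cekneoneoncoeno} use the same scheme with weight $\nu_K\rho_r$ (no exponential in $z$), where the key point is that the $\partial_z^2$ term in $\mathcal L_Y$ still produces a coercive lower bound up to a lower-order $z$-decay error absorbed by $K$ large. The hardest estimate is the global control \eqref{controlsobolev} in $W^{1,2q+2}$, which is not protected by any weight: here I would run an $L^{2q+2}$ energy estimate on $\nabla v$ using the dissipative structure, handling the nonlinearity $F(v)=F_1+F_2$ from \eqref{defnlv} via Gagliardo--Nirenberg and the fact that $\Phit_b-\Phi$ is controlled on the support by the reconnecting profile, so that $F_1$ decays like $b$ and $F_2$ is supercritical but tamed by smallness of $v$ in $L^\infty$ (obtained by interpolating \eqref{poitwiseboundhtwo}, \eqref{cekneoneoncoeno} and \eqref{controlsobolev} through Sobolev embedding on $\mathbb R^4$).

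\textbf{The topological argument and main obstacle.} Once all bounds except \eqref{controlunstable} are strictly improved, standard Brouwer-type reasoning applied to the finite-dimensional map $(a_{j,M}(0))\mapsto s^*\cdot\bigl(a_{j,M}(s^*)\bigr)$ produces initial data satisfying \eqref{nvnono} such that \eqref{controlunstable} holds for all time; the outgoing sign of $-(\lambda_j+M)>0$ ensures that the boundary is traversed transversally, so the exit time depends continuously on the initial parameters, and any exit would yield a continuous retraction of a finite-dimensional ball onto its sphere. The main obstacle I anticipate is precisely the global $W^{1,2q+2}$ bound \eqref{controlsobolev}: because $p>5$ and $s_c>1$, the nonlinearity is energy-supercritical, so one cannot close this estimate by a soft energy argument alone; it requires carefully combining the $L^\infty$-smallness from interpolation, the pointwise decay of the profile $\Phit_b$ and its derivatives away from the boundary layer, and the gain coming from the dissipative parabolic structure in an $L^{2q+2}$ framework, as emphasized in the introduction to the paper.
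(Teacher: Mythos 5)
Your overall architecture (modulation equations, weighted energy estimates, improvement of all bounds except the unstable modes, Brouwer argument on the outgoing directions) coincides with the paper's, and your treatment of the parameters, the exponentially weighted norms and the topological step is essentially correct. But there is a genuine gap at precisely the point you flag as the main obstacle: the global $W^{1,2q+2}$ bound \eqref{controlsobolev}. Your sketch — linearize around $\Phi$ so that ``$F_1$ decays like $b$'', then Gagliardo--Nirenberg plus $L^\infty$ smallness for $F_2$ — does not close. First, the bound $|\Phit_b^{p-1}-\Phi^{p-1}|\lesssim b(1+z^2)$ is only useful for $|z|\ll b^{-1/2}$, not globally, so $F_1$ is \emph{not} uniformly of size $b$ in an unweighted norm over $\Bbb R^4$. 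Second, and more seriously, if you keep $p\Phi^{p-1}$ (or $p\Phit_b^{p-1}$) in the linear part, the resulting operator has an order-one, non-decaying (in $z$, for $\Phi$) potential and negative modes, and in the unweighted $L^{2q+2}$ framework there is no spectral gap and no orthogonality available for $v=\psi+\e$, so no coercivity whatsoever on the inner region $\{r\lesssim 1,\ |z|\lesssim b^{-1/2}\}$ where the profile is of size one.

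The paper's mechanism, which is the decisive missing idea, is the following: one runs the $L^{2q+2}$ estimate on $v$ for the equation \eqref{vequationbis} written around $\Phit_b$ with the \emph{full} nonlinearity $\widehat F(v)=(\Phit_b+v)^p-\Phit_b^p$, so that the damping comes purely from the drift term, $-\frac{\l_s}{\l}\big(\frac{2}{p-1}-\frac{4}{2q+2}\big)>c>0$ for $q$ large; the potential-type term $|\Phit_b|\,|v|$ is then split using the decay of the reconnecting profile \eqref{cneokcneoneon}: on $\{|Z|\geq A\}\cup\{r\geq A\}$ it is small and absorbed, while on the compact region $\{|Z|\leq A,\ r\leq A\}$ (i.e. $|z|\lesssim Ab^{-1/2}$) one pays a factor $b^{-K}$ to pass to the polynomially weighted norm, $\int_{|Z|\le A, r\le A}|v|^{2q+2}\lesssim C(A)b^{-K}\int\frac{|v|^{2q+2}}{1+|z|^{2K}}\rho_r\,dY$. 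This loss of $s^{K}$ is affordable only because the polynomially localized bounds \eqref{cekneoneoncoeno} are propagated at the level $s^{-(K+1)}$, which in turn forces the approximate solution of Lemma \ref{lemmaapproximate} to be built to order $n\geq n(K)$ so that $\Psit_b$ is of size $b^{K+\frac32}$ in that norm; this is exactly the hierarchy $q\to K(q)\to n(K,q)\to s_0$ in the statement, which your proposal does not account for. A secondary inaccuracy: in the polynomially weighted estimates the coercivity does not come from the $\pa_z^2$ part of $\mathcal L_Y$ but from the drift $\frac12 z\pa_z$ acting on $\nu_K$, which produces a damping of size $K$ for $|z|\geq z(K)$ that beats the bounded potential $p\Phi^{p-1}$ once $K\gtrsim\|\Phi\|_{L^\infty}^{p-1}$, the region $|z|\lesssim 1$ being controlled by the exponentially weighted norms appearing on the right-hand side of \eqref{differentialcontrolnormbis}.
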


Proposition \ref{bootstrap} is the heart of the analysis, and the corresponding solutions are easily shown to satisfy the conclusions of Theorem \ref{thmmain}. The strategy of the proof follows \cite{martelmulti,MRR}: we prove Proposition \ref{bootstrap} by contradiction using a topological argument \`a la Brouwer: given $\lambda_0,b_0,\e_0$ satisfying \fref{scalingsmall}, \eqref{bparme}, \eqref{poitwiseboundhtwoinitial}, \eqref{cekneoneoncoenoinitial}, \eqref{controlsobolevinitial}, \eqref{orthoeinit}, we assume that for all $(a_{j,M}(0))_{-\ell_0\leq j\leq-2,\, 0\leq M\leq M(j)}$ satisfying \eqref{nvnono}, the exit time 
\bea
\label{defexittime}
\nonumber s^*=\sup&\{&s\geq s_0\ \ \mbox{such that}\ \ \eqref{controlsclaing}, \eqref{bparmebis}, \eqref{controlunstable}, \eqref{poitwiseboundhtwo}, \eqref{cekneoneoncoeno}, \eqref{controlsobolev}\\
& &  \text{holds} \ \text{on} \ [s_0,s)\}
\eea
is finite 
\be
\label{assumptioncontradiction}
s^*<+\infty
\ee
and look for a contradiction for $s_0\geq s_0(n,K,q)$ large enough. From now on, we therefore study the flow on $[s_0,s^*]$ where  \eqref{controlsclaing}, \eqref{bparmebis}, \eqref{controlunstable}, \eqref{poitwiseboundhtwo}, \eqref{cekneoneoncoeno}, \eqref{controlsobolev} hold. Using a bootstrap method we show that the bounds  \eqref{controlsclaing}, \eqref{bparmebis}, \eqref{poitwiseboundhtwo}, \eqref{cekneoneoncoeno},  \eqref{controlsobolev} can be improved, implying that at time $s^*$ necessarily the unstable modes have grown and \fref{controlunstable} reaches its boundary. Since $0$ is a linear repulsive equilibrium for these modes, this will contradict Brouwer fixed point theorem.


\subsection{Modulation equations}


We now compute the modulation equations which describe the time evolution of the parameters. They are computed in the self-similar zone, and involve the $\rho$ weighted norm.

\begin{lemma}[Modulation equations]
\label{lemmamodulation}
There holds the modulation equations:
\bea
\label{controlMOd}
\nonumber &&\left|\lsl+\frac12-M(b)\right|+|b_s+bB(b)|+\sum_{j=-\ell_0}^{-2}\sum_{M=0}^{M(j)}|(a_{j,M})_s+(\l_j+M)a_{j,M}|\\
&\lesssim & b^{n+1}+b\left(\|\e\|_{L^2_{\rho_Y}}+\sum_{j=-\ell_0}^{-2}\sum_{M=0}^{M(j)} |a_{j,M}|\right).
\eea
\end{lemma}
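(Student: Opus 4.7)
The plan is to derive \eqref{controlMOd} by projecting the $\e$-equation \eqref{defequation} onto each eigenfunction $\phi_{j,2M}$ with $-\ell_0\leq j\leq -1$ and $0\leq M\leq M(j)$, which yields a square linear system in the $N=2+\sum_{j=-\ell_0}^{-2}(1+M(j))$ modulation unknowns
$$ X_s=\lsl+\tfrac12-M(b),\quad X_b=b_s+bB(b),\quad X_{j,M}=(a_{j,M})_s+(\lambda_j+M)a_{j,M}. $$
Since $\phi_{j,2M}$ is independent of $s$, differentiating \eqref{orthoe} in $s$ gives $(\pa_s\e,\phi_{j,2M})_{L^2_{\rho_Y}}=0$, and self-adjointness of $\L$ on $L^2_{\rho_Y}$ combined with \eqref{orthoe} gives $(\L\e,\phi_{j,2M})_{L^2_{\rho_Y}}=\mu_{j,2M}(\e,\phi_{j,2M})_{L^2_{\rho_Y}}=0$. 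Each projection therefore reduces to
$$ (\Mod,\phi_{j,2M})_{L^2_{\rho_Y}}=(\Psit_b+L(\e)+F(v),\phi_{j,2M})_{L^2_{\rho_Y}}. $$

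Invertibility of the $N\times N$ coefficient matrix follows from a perturbative analysis around $b=0$. By \eqref{deriveebzeroprofil}, at $b=0$ one has $\Lambda_Y\Phit_b=\Lambda_r\Phi=c_\star\psi_{-1}$ with $c_\star=\|\Lambda_r\Phi\|_{L^2(r^2\rho_r dr)}$, and $\pa_b\Phit_b=-\tfrac12(P_2+2P_0)\Lambda_r\Phi$. Pairing with $\phi_{-1,0}=\psi_{-1}P_0$ and $\phi_{-1,2}=\psi_{-1}P_2$, and using $(P_0,P_2)_{L^2_{\rho_z}}=0$, the $2\times 2$ block for $(X_s,X_b)$ read off the rows $(j,M)=(-1,0)$ and $(-1,2)$ is upper triangular with nonvanishing diagonal, hence invertible. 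Moreover the orthogonality of the $\psi_j$ forces all couplings between these rows and the rows $(j,M)$ with $j\leq -2$ to vanish at $b=0$; those remaining rows contribute a diagonal term $\|\phi_{j,2M}\|^2_{L^2_{\rho_Y}}X_{j,M}$ from the sum in \eqref{modulation}, while the $\Lambda_Y\psi$ contribution carries the prefactor $\lsl+\tfrac12=M(b)+X_s$. Hence the full matrix is an $O(b)$ perturbation of an invertible block-diagonal matrix and is uniformly invertible for $b$ small.

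The RHS is then bounded term by term. For $\Psit_b$, the decomposition \eqref{eq:decompoistionPsitbinPsitb0andG} shows that the far-field pieces $(\chi_\delta-1)\pa_Z^2 G$ and $(\chi_\delta-1)Z\pa_Z G$ live on $|z|\geq\delta/\sqrt{b}$ and contribute only $O(e^{-c/b})$ against the Gaussian weight $\rho_Y$, while the local bound \eqref{esterrorinomegabis} combined with $b|Z|^{2n+2}=b^{n+2}|z|^{2n+2}$ and integrability against $\rho_z$ yields $|(\Psit_b,\phi_{j,2M})_{L^2_{\rho_Y}}|\lesssim b^{n+1}$. For $L(\e)=(\lsl+\tfrac12)\Lambda_Y\e$, transferring $\Lambda_Y$ onto $\phi_{j,2M}\rho_Y$ by integration by parts in $L^2_{\rho_Y}$ produces $|(L(\e),\phi_{j,2M})_{L^2_{\rho_Y}}|\lesssim(b+|X_s|)\|\e\|_{L^2_{\rho_Y}}$. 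For $F(v)=F_1+F_2$, the linear piece $F_1=p(\Phit_b^{p-1}-\Phi^{p-1})v$ has coefficient of size $O(b)$ by Taylor expansion and \eqref{deriveebzeroprofil}, giving $|(F_1,\phi_{j,2M})|\lesssim b(\|\e\|_{L^2_{\rho_Y}}+\sum|a_{j,M}|)$; for the quadratic piece $F_2$, writing $v=\psi+\e$ and using the bootstrap bounds $\|\e\|_{H^2_{\rho_Y}}\lesssim s^{-n/2}$ and $\sum|a_{j,M}|^2\lesssim s^{-n}$ (both much smaller than $b\sim 1/s$ for $n$ large) extracts an overall $O(b)$ factor.

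Inverting the linear system yields \eqref{controlMOd}. The main technical obstacle is the self-referential bound in the $L(\e)$ estimate, whose prefactor $|\lsl+\tfrac12|$ involves the unknown $X_s$; this is handled by writing $|\lsl+\tfrac12|\leq|M(b)|+|X_s|$ and absorbing the $|X_s|\|\e\|_{L^2_{\rho_Y}}$ contribution back into the LHS using the bootstrap smallness of $\|\e\|_{L^2_{\rho_Y}}$.
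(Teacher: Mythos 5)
Your proposal is correct and follows essentially the same route as the paper: project \eqref{defequation} onto the $\phi_{j,2M}$ using the orthogonality conditions \eqref{orthoe} and self-adjointness of $\L_Y$, bound $(\Psit_b,\phi_{j,2M})$, $(L(\e),\phi_{j,2M})$, $(F_1,\phi_{j,2M})$, $(F_2,\phi_{j,2M})$ against the Gaussian measure exactly as in the paper, and invert the resulting nearly block-diagonal/triangular system in the modulation unknowns (the paper phrases this via the projections onto $\Lambda\Phi$ and $(z^2-2)\Lambda\Phi$ and the non-degeneracy at $b=0$ from \eqref{deriveebzeroprofil}, which is your $2\times 2$ upper-triangular block), absorbing the self-referential $|X_s|\,\|\e\|_{L^2_{\rho_Y}}$ term thanks to the bootstrap smallness. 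The only cosmetic difference is that you make the $N\times N$ matrix inversion explicit while the paper writes the three projections and injects them into its intermediate identity; the content is the same.
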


\begin{proof} This lemma follows from the choice of orthogonality conditions \eqref{orthoe} and the explicit properties of the refined reconnecting profile $\Phit_b$. The control of the nonlinear term relies in an essential way on \eqref{controlsobolev} which from Sobolev implies for $q$ large enough the $L^\infty$ smallness
\be
\label{poitwisebound}
\|v\|_{L^{\infty}}\lesssim \|v\|_{W^{1,2q+2}}\lesssim \frac{1}{s^{\delta_q}}\ll1.
\ee
We take the $L^2_{\rho_Y}$ scalar product of \eqref{defequation} with $\phi_{j,2M}$ and compute from \eqref{orthoe}: $$(\Mod,\phi_{j,2M})_{L^2_{\rho_Y}}=(\Psit_b,\phi_{j,2M})_{L^2_{\rho_Y}}+(L(\e)+F(v),\phi_{j,2M})_{L^2_{\rho_Y}}.$$ 
The error term in controlled from \eqref{eq:decompoistionPsitbinPsitb0andG} \eqref{esterrorinomegabis} thanks to the space localization of the $\rho_YdY$ measure : $$|(\Psit_b,\phi_{j,2M})_{L^2_{\rho_Y}}|\lesssim b^{n+1}.$$
The linear term is estimated by integration by parts 
$$|(L(\e),\phi_{j,2M})_{L^2_{\rho_Y}}|\lesssim \left|\lsl+\frac 12\right|\|\e\|_{L^2_{\rho_Y}}.$$ For the nonlinear term, we recall \eqref{defnlv}. We estimate:
$$|\pa_b\Phi_b|=\left|-\frac{z^2}{2(1+bz^2)}\frac{1}{\mu_b^{\frac2{p-1}}}\Lambda\Phi\left(\frac{r}{\mu_b}\right)\right|\lesssim |z|^2, \ \ |\pa_b\tilde{v}_b|\lesssim 1$$ which using $\|\Phit_b\|_{L^{\infty}}\lesssim 1$ implies the pointwise bound 
\be
\label{pointwiseboundlinearerror}
|\Phit_b^{p-1}-\Phi^{p-1}|\lesssim \int_0^b\left |\Phit_b^{p-2}\pa_b\Phit_b\right|db\lesssim b(1+|z|^2)
\ee
and hence
$$\left|(F_1(v),\phi_{j,2M})_{L^2_{\rho_Y}}\right|\lesssim b\left|(v,(1+|z|^2)\phi_{j,2M})_{L^2_{\rho_Y}}\right|\lesssim b\left(\|\e\|_{L^2_{\rho_Y}}+\sum |a_{j,M}|\right).$$
For the remaining nonlinear term, we use the rough $L^{\infty}$ bound $\|v\|_{L^\infty}+\|\Phit_b\|_{L^{\infty}}\leq 1$ and the confining measure: 
\bee
&&|(F_2(v),\phi_{j,2M})_{L^2_{\rho_Y}}|\lesssim \int (|v|^2+|v|^p)|\phi_{j,2M}|\rho_YdY\lesssim \int |v|^2|\phi_{j,2M}|\rho_YdY\\
&\lesssim & b\int |v||\phi_{j,2M}|\rho_YdY\lesssim b\|v\|_{L^2_{\rho_Y}}\lesssim  b\left(\|\e\|_{L^2_{\rho_Y}}+\sum|a_{j,M}|\right)
\eee
where we used the fact that $v=\psi+\e$ and the rough bound 
\be
\label{roughboundmodes}
\|v\|_{L^2_{\rho_Y}}\leq\|\e\|_{L^2_{\rho_Y}}+\sum |a_{j,M}|\leq b
\ee
which follows from \eqref{bparmebis} \eqref{controlunstable} \eqref{poitwiseboundhtwo}. We therefore have obtained the following identity:
\be
\label{firstidentity}
\left|(\Mod,\phi_{j,2M})_{L^2_{\rho_Y}}\right|\lesssim b^{n+1}+\left(\left|\lsl+\frac 12-M(b)\right|+b\right)\|\e\|_{L^2_{\rho_Y}}+b\sum|a_{j,M}|.
\ee
We now compute the lhs of \eqref{firstidentity} for the various values of $j$.\\

\noindent{\underline{$a_{j,M}$ terms, $j\leq -2$}}. First observe from \eqref{definitionv}, \eqref{degeneracy} the bounds 
\be
\label{ceoneoneoc}
\| \nabla_Y^k\Lambda \tilde{v}_b\|_{L^2}+\|\nabla_Y^k\pa_b\tilde{v}_b\|_{L^2_{\rho_Y}}\lesssim b,\ \ k=0, 1, 2,
\ee
which together with the computations 
\bee
\nabla\Phi_b = \frac{1-bz^2}{1+bz^2}\frac{1}{\mu_b^{\frac{2}{p-1}}}\Lambda\Phi\left(\frac{r}{\mu_b}\right),\,\,\,\, \pr_b\Phi_b = \frac{z^2}{2\mu_b^2}\frac{1}{\mu_b^{\frac{2}{p-1}}}\Lambda\Phi\left(\frac{r}{\mu_b}\right)
\eee
yields
\be
\label{estpourmod}
\| \nabla_Y^k(\Lambda \Phit_b-\Lambda \Phi)\|_{L^2_{\rho_Y}}+\left\| \nabla_Y^k\left(\pa_b\Phit_b+\frac{1}{2}z^2\Lambda \Phi\right)\right\|_{L^2_{\rho_Y}}\lesssim b,\ \ k=0, 1, 2.
\ee
Hence, we have in particular 
$$ (\pa_b\Phit_b,\phi_{j,2M})_{L^2_{\rho_Y}}=O(b), \ \ (\Lambda \Phit_b,\phi_{j,2M})_{L^2_{\rho_Y}}=O(b).$$
We conclude from \eqref{modulation} using the orthonormality of eigenfunctions, separation of variables and the rough bound \eqref{roughboundmodes}:
\bea
\label{estuninitial}
(\Mod,\phi_{j,2M})_{L^2_{\rho_Y}} &=&\left[(a_{j,M})_s+(\l_j+M)a_{j,M}\right]\|\phi_{j,2M}\|_{L^2_{\rho_Y}}^2\\
\nonumber &+& O\left[b\left(\left|\lsl+\frac 12-M(b)\right|+|a_{j,M}|+|b_s+bB(b)|\right)\right].
\eea

\noindent{\underline{Scaling terms}}. We compute from \eqref{estpourmod} :
$$
(\Lambda \Phit_b,\Lamdba \Phi)_{L^2_{\rho_Y}}=\|\Lambda \Phi\|^2_{L^2_{\rho_Y}}+O(b)$$
and hence:
\bea
\label{estuninitialbis}
(\Mod,\Lambda \Phi)_{L^2_{\rho_Y}}&=&-\left(\lsl+\frac 12-M(b)\right)\left[\|\Lambda \Phi\|^2_{L^2_{\rho_Y}}+O(b)\right]\\
\nonumber &+& O\left[|b_s+bB(b)|+b\left(\left|\lsl+\frac 12-M(b)\right|+\sum|a_{j,M}|\right)\right].
\eea

\noindent{\underline{$b$ equation}}. We compute from \eqref{estpourmod}:
\bee
&&(\Lambda \Phit_b,(z^2-2)\Lambda\Phi)_{L^2_{\rho_Y}}=O(b)\\
&&(\pa_b\Phit_b,(z^2-2)\Lambda\Phi)_{L^2_{\rho_Y}}=-\frac12\|(z^2-2)\Lambda \Phi\|_{L^2_{\rho_Y}}^2+O(b)
\eee
from which using the orthogonality of eigenfunctions:
\bea
\label{estuninitialbisbis}
\nonumber &&(\Mod,(z^2-2)\Lambda \Phi)_{L^2_{\rho_Y}}=-\frac12 \|(z^2-2)\Lambda \Phi\|_{L^2_{\rho_Y}}^2 (1+O(b))(b_s+bB(b))\\
&+& O\left[b\left(\left|\lsl+\frac 12-M(b)\right|+\sum|a_{j,M}|\right)\right].
\eea

\noindent{\underline{Conclusion}}.
Injecting \eqref{estuninitial}, \eqref{estuninitialbis}, \eqref{estuninitialbisbis} into \eqref{firstidentity} yields \eqref{controlMOd}.
\end{proof}


\subsection{Inner $H^2$ bounds with exponential localization}


We now turn to the control of the flow in exponentially weighted norms which is an elementary consequence of  the spectral gap estimate \eqref{spectralgapestimate}, the dissipative structure of the flow,  the $L^\infty$ bound \eqref{poitwisebound} to control the non linear term and the explicit form of the refined reconnecting $\Phit_b$ profiles which generate the leading order  error term.

\begin{lemma}[Lyapunov control of exponentially weighed norms]
\label{boundsexpo}
There holds the pointwise differential bounds:
\be
\label{differentialcontrolnorm}
\frac{d}{ds}\|\e\|_{L^2_{\rho_Y}}^2+c\|\e\|_{H^1_{\rho_Y}}^2 \lesssim b^{2n+2}+ (\|v\|^2_{L^{\infty}}+b^2)\sum|a_{j,M}|^2,
\ee
\be
\label{woneqlocbound}
\frac{d}{ds}\|\nabla\e\|^{2q+2}_{L^{2q+2}_{\rho_Y}}+c\|\nabla\e\|^{2q+2}_{L^{2q+2}_{\rho_Y}}\lesssim \|\e\|_{H^1_{\rho_Y}}^{2q+2}+\sum |a_{j,M}|^{2q+2}+ b^{(2q+2)(n+1)},
\ee
\bea
\label{differentialcontrolnormbisbis}
&&\frac{d}{ds}\|\mathcal{L}_Y\e\|_{L^2_{\rho_Y}}^2+c\|\mathcal{L}_Y\e\|_{H^1_{\rho_Y}}^2\lesssim  b^{2n+2}+(b+\|v\|^2_{L^\infty})\|\e\|_{H^1_{\rho_Y}}^2\\
\nonumber &+& (b^2+\|v\|_{L^\infty}^2)\sum |a_{j,M}|^2+e^{-\frac{c}{\sqrt{b}}}\int \frac{\e^2+|\nabla_Y\e|^2}{1+|z|^{2K}}\rho_r dY
\eea
for some universal constant $c>0$.
\end{lemma}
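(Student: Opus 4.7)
The plan is to derive all three bounds as weighted energy inequalities on the $\e$--equation \eqref{defequation}, pairing it in $L^2_{\rho_Y}$ against $\e$, against $|\nabla\e|^{2q}\nabla\e$, and against $\L_Y\e$ respectively. Coercivity on the left comes in each case from the spectral gap \eqref{spectralgapestimate}, which applies thanks to the orthogonality conditions \eqref{orthoe}; for the last estimate I use that $\L_Y\e$ inherits these conditions since $(\L_Y\e,\phi_{j,2M})_{L^2_{\rho_Y}}=\mu_{j,2M}(\e,\phi_{j,2M})_{L^2_{\rho_Y}}=0$. The smallness on the right is supplied by the error bound \eqref{esterrorinomegabis}, the modulation estimate \eqref{controlMOd}, and the $L^\infty$--control \eqref{poitwisebound} of $v$.

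For \eqref{differentialcontrolnorm} I pair \eqref{defequation} with $\e\rho_Y$; the left gives $\tfrac12\frac{d}{ds}\|\e\|_{L^2_{\rho_Y}}^2+c\|\e\|_{H^1_{\rho_Y}}^2$ by coercivity. On the right, the inner part of $\Psit_b$ yields $\lesssim b^{n+1}\|\e\|_{L^2_{\rho_Y}}$ from \eqref{esterrorinomegabis}, while the outer part in \eqref{eq:decompoistionPsitbinPsitb0andG} is supported on $|z|\gtrsim 1/\sqrt b$ and is $O(e^{-c/b})$ thanks to $\rho_Y$. The $\phi_{j,2M}$ part of $\Mod$ drops by orthogonality, and the remaining $\Lambda\Phit_b$ and $\pa_b\Phit_b$ pairings are absorbed via \eqref{controlMOd}. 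The linear term $(L(\e),\e)$ integrates by parts against $\rho_Y$ to $O(b)\|\e\|_{H^1_{\rho_Y}}^2$ and is absorbed in the coercivity. The nonlinearity splits as $F=F_1+F_2$: $F_1$ is estimated via the pointwise bound \eqref{pointwiseboundlinearerror}, $F_2$ via $|F_2|\lesssim\|v\|_{L^\infty}|v|$; using $v=\psi+\e$ and Young's inequality produces \eqref{differentialcontrolnorm}.

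For \eqref{woneqlocbound} I differentiate \eqref{defequation} in space and test against $|\nabla\e|^{2q}\nabla\e\rho_Y$. Commuting $\nabla$ with $\L_Y$ yields the effective operator $-\Delta+\tfrac12\Lambda_Y+\tfrac12-p\Phi^{p-1}$ on $\nabla\e$, together with a zero-order source $-p(p-1)\Phi^{p-2}(\nabla\Phi)\e$, the extra $+\tfrac12$ coming from $[\nabla,\Lambda_Y]=\nabla$. A standard weighted $L^{2q+2}$ IBP gives the coercive lower bound
\[
(2q+1)\!\int\!|\nabla\e|^{2q}|\nabla^2\e|^2\rho_Y+\Bigl(\tfrac12+O(q^{-1})\Bigr)\|\nabla\e\|_{L^{2q+2}_{\rho_Y}}^{2q+2}+\tfrac{1}{4(q+1)}\!\int\!|\nabla\e|^{2q+2}|Y|^2\rho_Y,
\]
which for $q$ large dominates the bounded perturbation $p\int\Phi^{p-1}|\nabla\e|^{2q+2}\rho_Y$ and produces coercivity $\geq c\|\nabla\e\|_{L^{2q+2}_{\rho_Y}}^{2q+2}$. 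The forcing terms $\nabla\Psit_b$, $\nabla\Mod$, $\nabla L(\e)$ and $\nabla F(v)$ are bounded by H\"older and Young using \eqref{controlMOd}, the $L^\infty$--smallness of $v$, and the splitting $v=\psi+\e$; the zero-order $\e$--source is absorbed through the weighted Sobolev embedding $\|\e\|_{L^{2q+2}_{\rho_Y}}\lesssim\|\e\|_{H^1_{\rho_Y}}$, producing the $\|\e\|_{H^1_{\rho_Y}}^{2q+2}$, $\sum|a_{j,M}|^{2q+2}$ and $b^{(2q+2)(n+1)}$ contributions on the right.

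Finally, \eqref{differentialcontrolnormbisbis} is obtained by commuting $\L_Y$ with \eqref{defequation} and pairing with $\L_Y\e\rho_Y$, with coercivity now $\geq c\|\L_Y\e\|_{H^1_{\rho_Y}}^2$. The inner part of $\L_Y\Psit_b$ yields $b^{2n+2}$ via \eqref{esterrorinomegabis}, and the pairings of $\L_Y\Mod$, $\L_Y L(\e)$ and $\L_Y F(v)$ with $\L_Y\e$ reproduce the $(b+\|v\|_{L^\infty}^2)\|\e\|_{H^1_{\rho_Y}}^2$ and $(b^2+\|v\|_{L^\infty}^2)\sum|a_{j,M}|^2$ contributions by the same mechanism as in the first estimate, with the two extra derivatives handled by the elliptic regularity of $\L_Y$ and \eqref{pointwiseboundlinearerror}. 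The delicate point, and the reason for the last term in \eqref{differentialcontrolnormbisbis}, is the outer piece of $\L_Y\Psit_b$ arising from the first two terms in \eqref{eq:decompoistionPsitbinPsitb0andG}, supported on $|z|\gtrsim 1/\sqrt b$: simply dominating with $\rho_Y$ would require polynomial $z$--control on $\L_Y\e$ which I do not have, so I instead split $\rho_Y=\rho_r\rho_z$, extract $\rho_z\leq e^{-c/b}\leq e^{-c/\sqrt b}$ on the support of the source, and integrate by parts twice to move two derivatives off $\e$, leaving the polynomially weighted norm $\int(\e^2+|\nabla\e|^2)(1+|z|^{2K})^{-1}\rho_r\,dY$ that appears on the right of \eqref{differentialcontrolnormbisbis}. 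Carrying this last step out cleanly is the main obstacle.
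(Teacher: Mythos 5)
Your treatment of \eqref{differentialcontrolnorm} follows the paper's argument and is fine, but the other two estimates have genuine gaps. For \eqref{woneqlocbound}, the coercive lower bound you claim from the weighted $L^{2q+2}$ integration by parts is not what the computation gives: since $-\Delta_Y+\frac12 Y\cdot\nabla=-\rho_Y^{-1}\nabla\cdot(\rho_Y\nabla)$, testing against $\e_i^{2q+1}\rho_Y$ makes the drift contribution cancel exactly against the boundary term of the Laplacian, leaving only $(2q+1)\int\e_i^{2q}|\nabla\e_i|^2\rho_Y$; there is no $\frac{1}{4(q+1)}\int|Y|^2|\nabla\e|^{2q+2}\rho_Y$ term, and the only zeroth-order gain is a fixed constant (the $\frac{1}{p-1}$ from $\Lambda_Y$ plus the commutator shift), which does not grow with $q$ and cannot dominate the bounded but large potential $p\int\Phi^{p-1}\e_i^{2q+2}\rho_Y\leq p\|\Phi\|_{L^\infty}^{p-1}\|\e_i\|^{2q+2}_{L^{2q+2}_{\rho_Y}}$. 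Moreover $\e_i^{q+1}$ satisfies no orthogonality conditions, so the negative modes of $\L_Y$ cannot be ignored. The paper's key device, which is missing from your proposal, is to apply the spectral decomposition of $\L_Y$ to $w=\e_i^{q+1}$: for every large $A$ one has $\int|\nabla w|^2\rho_Y\geq c\int|\nabla w|^2\rho_Y+A\int w^2\rho_Y-C_A\sum(w,\phi_{j,2M})^2_{L^2_{\rho_Y}}$ over the finitely many modes with $\mu_{j,2M}\leq A$; choosing $A\gg p\|\Phi\|_{L^\infty}^{p-1}$ absorbs the potential, and the projections are controlled by H\"older, $(\e_i^{q+1},\phi_{j,2M})^2\leq\delta\|\e_i\|^{2q+2}_{L^{2q+2}_{\rho_Y}}+c_\delta\|\e_i\|_{L^2_{\rho_Y}}^{2q+2}$, which is precisely the origin of the $\|\e\|_{H^1_{\rho_Y}}^{2q+2}$ term on the right of \eqref{woneqlocbound}. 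Without some such mechanism your coercivity step simply fails.

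For \eqref{differentialcontrolnormbisbis} your overall structure (testing against $\L_Y\e$, which inherits \eqref{orthoe}) is correct, but you misidentify the source of the exotic last term and leave the corresponding step incomplete. The outer piece $b(\chi_\delta-1)\pa_Z^2G+B(b)(\chi_\delta-1)Z\pa_ZG$ of $\Psit_b$ is supported on $|z|\gtrsim\delta/\sqrt b$, where $\rho_z\leq e^{-c/b}\ll b^{n+1}$, so the brute-force bound $|(\L_Y\Psit_b,\L_Y\e)_{L^2_{\rho_Y}}|\lesssim b^{n+1}\|\L_Y\e\|_{L^2_{\rho_Y}}$ suffices and no double integration by parts is needed there. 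The genuinely delicate contribution is the nonlinear term $F_1=p(\Phit_b^{p-1}-\Phi^{p-1})v$: after one integration by parts (self-adjointness of $\L_Y$) one must bound $\int|\Phit_b^{p-1}-\Phi^{p-1}|^2|\nabla_Y\e|^2\rho_Y$ and $\int|\nabla_Y(\Phit_b^{p-1}-\Phi^{p-1})|^2\e^2\rho_Y$, and since $|\Phit_b^{p-1}-\Phi^{p-1}|\lesssim b(1+z^2)$ is \emph{not} small for $|z|\gtrsim b^{-1/2}$ while a weight $(1+z^2)^2$ exceeds what the variance bound \eqref{wegithedrho} can absorb at the $H^1_{\rho_Y}$ level, one splits at $|z|=b^{-1/4}$: inside, the factor is $O(\sqrt b)$ and yields $b\|\e\|_{H^1_{\rho_Y}}^2$; outside, one trades $\rho_z$ for $e^{-c/\sqrt b}(1+|z|^{2K})^{-1}$, keeping only $\rho_r$, which is exactly the last term of \eqref{differentialcontrolnormbisbis}. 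Since you explicitly defer "carrying this last step out cleanly," and your version of it is aimed at a term that does not require it, the third estimate is not established as proposed.
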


\begin{proof} 
{\bf step 1} $L^2$ exponential bound. We compute from \eqref{defequation}:
\bea
\label{vnbeovneoneo}
\nonumber &&\frac 12\frac{d}{ds}\|\e\|_{L^2_{\rho_Y}}^2=(\e,\pa_s\e)_{L^2_{\rho_Y}}\\
&=&-(\mathcal{L}\e,\e)_{L^2_{\rho_Y}}+(\Psit_b+L(\e)-\Mod+F(v),\e)_{L^2_{\rho_Y}}
\eea
and estimate all terms in the above identity.\\
We start with the nonlinear term \eqref{defnlv}. Recall the variance bound\footnote{see for example \cite{CRS}, Appendix A.}  
\be
\label{wegithedrho}
\|Yu\|_{L^2_{\rho_Y}}\lesssim \|u\|_{H^1_{\rho_Y}}
\ee
which together with the pointwise bound \eqref{pointwiseboundlinearerror}
ensures $$|([\Phi_b^{p-1}-\Phi^{p-1}]\e,\e)_{L^2_{\rho_Y}}\lesssim b((1+|z|^2)\e,\e)_{L^2_{\rho_Y}}\lesssim b\|\e\|_{H^1_{\rho_Y}}^2.$$ We now estimate using the rough $L^\infty$ bound $\|v\|_{L^{\infty}}\ll 1$:
\bee
|(F_2(v),\e)_{\rho}|&\lesssim & \int |\e|v^2\rho_Y dY \leq\delta\int|\e|^2\rho+C_\delta \int |v|^4\rho_Y dY\\
& \leq & \delta \|\e\|_{L^2_{\rho_Y}}^2+C_\delta\|v\|^2_{L^{\infty}}\int |v|^2\rho_YdY\\
& \lesssim & \delta \|\e\|_{L^2_{\rho_Y}}^2+\|v\|^2_{L^{\infty}}\sum|a_{j,M}|^2.
\eee
 To estimate the $L$ term, we use the rough bound from \eqref{controlMOd}:
 \be
 \label{roughboundparameters}
 \left|\lsl+\frac12\right|+|b_s+bB(b)|+|(a_{j,M})_s-(\l_j+M)a_{j,M}|\lesssim b
 \ee 
 which implies using  \eqref{integrationbyparts}, \eqref{wegithedrho}:
\be
\label{comutareurwirhgt}
|(\e,L(\e))_{L^2_{\rho_Y}}|\lesssim b|(\e,\Lambda \e)_{L^2_{\rho_Y}}|\lesssim b\|(1+|Y|)\e\|_{L^2_{\rho_Y}}\lesssim b\|\e\|_{H^1_\rho}^2.
\ee
The leading order term $\Psit_b$ term is estimated in brute force from \eqref{eq:decompoistionPsitbinPsitb0andG} \eqref{esterrorinomegabis} using the exponential localization of the measure: 
$$\left|(\e,\Psit_b)_{L^2_{\rho_Y}}\right|\lesssim b^{n+1}\|\e\|_{L^2_{\rho_Y}}.$$
To control the modulation parameters, we use \eqref{estpourmod}, \eqref{orthoe}, \eqref{controlMOd} to estimate:
\bee
|(\e, \Mod)|&\lesssim& b\left[\sum |a_{j,M}|+b^{n+1}+b\|\e\|_{L^2_{\rho_Y}}\right]\|(1+|Y|)\e\|_{L^2_{\rho_Y}}\\
& \lesssim & \de\|\e\|^2_{H^1_{\rho_Y}}+c_\de b^{2n+4}+c_\de b^2\sum |a_{j,M}|^2.
\eee
Injecting the collection of above bounds into \eqref{vnbeovneoneo} and using the spectral gap estimate \eqref{spectralgapestimate} with the choice of orthogonality conditions \eqref{orthoe} yields \eqref{differentialcontrolnorm}.\\

\noindent{\bf step 2} $\dot{W}^{1,2q+2}$ exponential bound. Let $q$ be a large enough integer. Let 
$$\e_i=\pa_{i}\e,\ \ i=1,2,3,4,$$ 
then from \eqref{defequation}:
\be
\label{eqei}
\pa_s\e_i+(\L+1)\e_i=\pa_i\left[\Psit_b-\Mod+L(\e)+F(v)\right]+p(p-1)\Phi^{p-2}\pr_i\Phi \ep.
\ee
We then compute:
\bee
&&\frac{1}{2q+2}\frac{d}{ds}\int \e_i^{2q+2}\rho_YdY=\int \e_i^{2q+1}\pa_s\e_i\\
&=& -\left((\L+1)\e_i, \e_i^{2q+1}\right)_{L^2_{\rho_Y}}+\left(\e_i^{2q+1},\pa_i\left[\Psit_b-\Mod+L(\e)+F(v)\right]\right)_{L^2_{\rho_Y}}\\
&& + \left(\e_i^{2q+1},p(p-1)\Phi^{p-2}\pr_i\Phi \ep\right)_{L^2_{\rho_Y}}
\eee
and estimate all terms in the above identity.

We integrate by parts to compute:
\bee
&&\left(\left(\Delta-\frac 12 Y\cdot\nabla\right)\e_i,\e_i^{2q+1}\right)_{L^2_{\rho_Y}}=\int \frac{1}{\rho_Y}\nabla \cdot(\rho_Y\nabla \e_i)\e_i^{2q+1}\rho_YdY\\
&=& -(2q+1)\int \e_i^{2q}|\nabla_Y\e_i|^{2}\rho_YdY=-\frac{2q+1}{(q+1)^2}\int |\nabla_Y(\e_i^{q+1})|^{2}\rho_YdY.
\eee
 We apply the spectral gap estimate \eqref{spectralgapestimate} to $\e_i^{q+1}$ and conclude that there exists $c>0$, and for all $A>0$ large enough, there exists $C_A$ such that 
 \bee
 \int |\nabla_Y(\e_i^{q+1})|^{2}\rho_YdY &\geq& c\int |\nabla(\e_i^{q+1})|^{2}\rho_YdY+  A\int (\e_i^{q+1})^{2}\rho_YdY\\
 &&-C_A\sum_{j,M\leq j(A),M(A)} \left(\e_i^{q+1},\phi_{j,2M}\right)_{L^2_{\rho_Y}}^2,
 \eee
 where $j,M\leq j(A),M(A)$ are the indices corresponding to all eigenvalues $\mu_{j,2M}$ of $\L_Y$ that satisfy $\mu_{j,2M}\leq A$.  Hence choosing $A$ large enough compared to $\|\Phi\|_{L^{\infty}}$, we infer
 \bee
 -\left((\L+1)\e_i, \e_i^{2q+1}\right)_{L^2_{\rho_Y}}&\leq& -c\int |\nabla(\e_i^{q+1})|^{2}\rho_YdY-\frac A2 \int (\e_i^{q+1})^{2}\rho_YdY\\
 &&+C_A\sum_{j,M\leq j(A),M(A)} \left(\e_i^{q+1},\phi_{j,2M}\right)_{L^2_{\rho_Y}}^2.
 \eee
 We now estimate using H\"older and the polynomial growth of eigenmodes $|\phi_{j,2M}|\lesssim |Y|^{c(j,M)}$: 
 \bee
&& \left(\e_i^{q+1},\phi_{j,2M}\right)^2_{L^2_{\rho_Y}}\lesssim \left(\int |\e_i|^{2q}|\phi_{j,2M}|^2\rho_YdY\right)\left(\int |\e_i|^2\rho_YdY\right)\\
 &\lesssim &\left(\int |\e_i|^{2q+2}\rho_YdY\right)^{\frac{2q}{2q+2}}\int |\e_i|^2\rho_YdY\\
& \leq& \delta\int |\e_i|^{2q+2}\rho_YdY+c_\delta\left(\int |\e_i|^2\rho_YdY\right)^{q+1}.
 \eee
 and hence, for $\de$ small enough compared to $C_A$, $j(A)$ and $M(A)$, we infer
 \be\label{eq:intermediaryestimateusefulinvolvingAcoercivity}
 -\left((\L+1)\e_i, \e_i^{2q+1}\right)_{L^2_{\rho_Y}}\leq -c\int |\nabla(\e_i^{q+1})|^{2}\rho_YdY-\frac A4 \int (\e_i^{q+1})^{2}\rho_YdY+C_A\|\e\|_{H^1_{\rho_Y}}^{2q+2}.
 \ee 
 
 The leading order error term is controlled from \eqref{esterrorinomegabis}:
 $$\left|\left(\e_i^{2q+1},\pa_i\Psit_b\right)_{L^2_{\rho_Y}}\right|\lesssim \int |\e_i|^{2q+2}\rho_YdY+\int|\pa_i\Psit_b|^{2q+2}\rho_YdY\lesssim \int |\e_i|^{2q+2}\rho_YdY+b^{(2q+2)(n+1)}.$$
 
 We integrate by parts and use \eqref{estimportante} to estimate:
 $$|(\e_i^{2q+2},\pa_i\Lambda \e)_{L^2_{\rho_Y}}|\lesssim \int (1+|Y|^2)\e_i^{2q+2}\rho_YdY\lesssim \int \left[\e_i^{2q+2}+|\nabla_Y(\e_i^{q+1})|^2\right]\rho_YdY$$
 and hence from \eqref{roughboundparameters}:
 $$|(\e_i^{2q+1},L(\e))_{L^2_{\rho_Y}}|\lesssim b\int \left[\e_i^{2q+2}+|\nabla_Y(\e_i^{q+1})|^2\right]\rho_YdY.$$
 
 Also, we have
 \bee
\left| \left(\e_i^{2q+1},p(p-1)\Phi^{p-2}\pr_i\Phi \ep\right)_{L^2_{\rho_Y}}\right| \lesssim  \int |\e_i|^{2q+1}|\e|\rho_YdY\lesssim  \int |\e_i|^{2q+2}\rho_YdY.
 \eee 
  
 We now turn to the control of the nonlinear term.  We first estimate using $\|\Phit_b\|_{L^{\infty}}+\|\nabla \Phit_b\|_{L^\infty}\lesssim 1$,  H\"older and the polynomial growth of $\psi$: \bee
&& |(\pa_iF_1(v),\e_i^{2q+1})_{L^2_{\rho_Y}}|\lesssim \int |\e_i|^{2q+1}(|v|+|\nabla v|)\rho_YdY\\
 &\lesssim &\int |\e_i|^{2q+2}\rho_YdY+\int (|v|+|\nabla v|)^{2q+2}\rho_YdY \\
 &\lesssim&  \int |\e_i|^{2q+2}\rho_YdY+\sum |a_{j,M}|^{2q+2}.
 \eee
  
We now compute 
 \bee
\nabla F_2(v)&=& p\nabla_Y v\left[(\Phit_b+v)^{p-1}-\Phit_b^{p-1}\right]\\
\nonumber & +& p\nabla_Y \Phit_b\left[(\Phit_b+v)^{p-1}-\Phit_b^{p-1}-(p-1)\Phit_b^{p-2}v\right]
\eee
and estimate by homogeneity with the $L^\infty$ bound \eqref{poitwisebound}: 
 \be
 \label{pointwisenls2}
 |F_2(v)|\lesssim |v|^2,\,\,\,\, |\nabla_YF_2(v)|\lesssim |\nabla_Y v||v|+|v|^2\lesssim |\nabla_Y v|+|v|
 \ee 
 and hence the same bound as above:
 \bee
  |(\pa_iF_1(v),\e_i^{2q+1})_{L^2_{\rho_Y}}|\lesssim\int |\e_i|^{2q+1}(|v|+|\nabla v|)\rho_YdY\lesssim  \int |\e_i|^{2q+2}\rho_YdY+\sum |a_{j,M}|^{2q+2}.
  \eee
   The collection of above bounds for $i=1,2, 3, 4$ yields \eqref{woneqlocbound} provided the constant $A$ in \eqref{eq:intermediaryestimateusefulinvolvingAcoercivity}  has been chosen large enough.\\
  
\noindent{\bf step 3} $\dot{H}^2$ exponential bound. Let $$\e_{(2)}=\mathcal{L}_Y\e,$$ then $\e_{(2)}$ satisfies the orthogonality conditions \eqref{orthoe} and the equation from \eqref{defequation}:
\be
\label{eqetwo}
\pa_s\e_{(2)}+\mathcal{L}_Y\e_{(2)}=\mathcal L_Y\left[\Psit_b-\Mod+L(\e)+F(v)\right]
\ee
and hence
\be
\label{cneknenonee}
\frac{1}{2}\frac{d}{ds}\|\e_{(2)}\|_{L^2_{\rho_Y}}^2=\left(-\mathcal{L}_Y\e_{(2)}+\mathcal L_Y\left[\Psit_b-\Mod+L(\e)+F(v)\right],\e_{(2)}\right)_{L^2_{\rho_Y}}.
\ee 
The main forcing term is estimated in brute force using \eqref{eq:decompoistionPsitbinPsitb0andG} \eqref{esterrorinomegabis}:
$$(\mathcal L_Y(\Psit_b),\e_{(2)})_{L^2_{\rho_Y}}\lesssim b^{n+1}\|\e_{(2)}\|_{L^2}\leq c_\delta b^{2n+2}+\delta\|\e_{(2)}\|^2_{H^1_{\rho_Y}}.$$
The $\Mod$ terms are controlled using \eqref{estpourmod}, \eqref{orthoe}, \eqref{controlMOd} which yield:
\bee
(\mathcal L_Y\Mod,\e_{(2)})_{L^2_{\rho_Y}}&\lesssim & b\left[\sum |a_{j,M}|+b^{n+1}+b\|\e\|_{L^2_{\rho_Y}}\right]\|(1+Y)\e_{(2)}\|_{L^2_{\rho_Y}}\\
& \lesssim & 
\de\|\e_{(2)}\|^2_{H^1_{\rho_Y}}+c_\de b^{2n+4}+c_\de b^2\sum |a_{j,M}|^2.
\eee
For the $L(\e)$ term, we use the commutator relation 
\be
\label{commrealtisn}
[\Delta_Y, \Lambda_Y]=2\Delta_Y
\ee
to compute 
\bee
[\mathcal{L}_Y,\Lambda_Y]&=& [-\Delta_Y+\Lambda_Y-p\Phi^{p-1},\Lambda_Y]=-2\Delta_Y+p(p-1)\Phi_n^{p-2}r\pa_r\Phi\\
&=& 2(\mathcal L_Y-\Lambda_Y +p\Phi^{p-1})+p(p-1)\Phi^{p-2}r\pa_r\Phi
\eee
from which using \eqref{integrationbyparts} \eqref{comutareurwirhgt}, \eqref{wegithedrho} and $\|\Phi\|_{L^{\infty}}+\|\Lambda\Phi\|_{L^{\infty}}\lesssim 1$:
\bee
|(\e_{(2)},\mathcal{L}_Y\Lambda_Y \e)_{L^2_{\rho_Y}}|&=& \left|(\e_{(2)},[\mathcal{L}_Y,\Lambda_Y]\e)_{L^2_{\rho_Y}}+(\e_{(2)},\Lambda_Y \e_{(2)})_{L^2_{\rho_Y}}\right|\\
& \lesssim &\|\e_{(2)}\|_{H^1_{\rho_Y}}^2+|(\e_{(2)},\Lambda \e)_{L^2_{\rho_Y}}|+\|\e\|_{L^2_{\rho_Y}}^2\lesssim \|\e_{(2)}\|_{H^1_{\rho_Y}}^2+\|\e\|_{H^1_{\rho_Y}}^2
\eee 
and hence from \eqref{roughboundparameters}:
$$
|(\e_{(2)},\mathcal{L}_YL(\e) )_{L^2_{\rho_Y}}|\lesssim b\left(\|\e_{(2)}\|_{H^1_{\rho_Y}}^2+\|\e\|_{H^1_{\rho_Y}}^2\right).
$$
It remains to estimate the nonlinear term. We first integrate by parts since $\mathcal{L}_Y$ is self adjoint for $(\cdot,\cdot)_{L^2_{\rho_Y}}$:
$$
|(\mathcal{L}_YF,\e_{(2)})_{L^2_{\rho_Y}}| = \left|(\nabla F,\nabla \e_{(2)})_{L^2_{\rho_Y}}+\left(\frac{2}{p-1}F-p\Phi^{p-1}F, \e_{(2)}\right)_{L^2_{\rho_Y}}\right|.$$
We recall the decomposition \eqref{defnlv}. For the first term, we need to deal with the fact that the difference $\Phit_b-\Phi$ is not $L^{\infty}$ small for $|Z|\gtrsim 1$. We first estimate pointwise using \eqref{degeneracy}
$$\left|\pa_b\Phi_b\right|=\left|\frac{1}{2b}Z\pa_ZG\right|=\left|-\frac{1}{b}\frac{Z^2}{\mu^{\frac {2}{p-1}+1}}\Lambda\left(\frac{r}{\mu(Z)}\right)\right|\lesssim \frac{Z^2}{b}=z^2, \ \ |\pa_b\tilde{v_b}|\lesssim b
$$
and similarly for higher derivatives, and hence the pointwise bound 
\be
\label{pintwisepa}
|\pa_b\Phit_b|+|\nabla_Y\pa_b\Phit_b|\lesssim 1+z^2.
\ee
This implies
\be
\label{enkvnenveonoe|}
|\nabla^k_Y(\Phit_b^{p-1}-\Phi^{p-1})|=\left|(p-1)\int_0^b\nabla_Y\left(\Phit_b^{p-2}\pa_b\Phit_b\right)db\right|\lesssim b(1+z^2), \ \ k=0,1.
\ee

We first estimate:
$$
\left|\left(\left(\frac{2}{p-1}-p\Phi^{p-1}\right)F_1, \e_{(2)}\right)_{L^2_{\rho_Y}}\right|\lesssim b\int (1+|Y|^2)|\e_{(2)}|^2\rho_YdY\lesssim b\|\e_{(2)}\|_{H^1_{\rho_Y}}^2.$$
Next:
\bee
&&|((\Phit_b^{p-1}-\Phi^{p-1})\nabla_Y v,\nabla_Y\e_{(2)})_{L^2_{\rho_Y}}\\
&\leq& \delta \|\nabla_Y \e_{(2)}\|_{L^2_Y}^2+c_\delta b^2\sum |a_{j,M}|^2+c_\delta\int |\Phit_b^{p-1}-\Phi^{p-1}|^2|\nabla_Y\e|^2
\eee
and
\bee
&&|(v\nabla_Y(\Phit_b^{p-1}-\Phi^{p-1}),\nabla_Y\e_{(2)})_{L^2_{\rho_Y}}|\\
&\leq &\delta \|\nabla_Y \e_{(2)}\|_{L^2_Y}^2+c_\delta b^2\sum |a_{j,M}|^2+c_\delta\int |\nabla(\Phit_b^{p-1}-\Phi^{p-1})|^2|\e|^2.
\eee
We split the last integral in two parts using \eqref{enkvnenveonoe|}:
\bee
&& \int\Big(|\nabla(\Phit_b^{p-1}-\Phi^{p-1})|^2\e^2+ |\Phit_b^{p-1}-\Phi^{p-1}|^2|\nabla_Y\e|^2\Big)\rho_YdY\\
&\lesssim & \int_{|z|\leq \frac{1}{b^{\frac{1}{4}}}}\Big(|\nabla(\Phit_b^{p-1}-\Phi^{p-1})|^2\e^2+ |\Phit_b^{p-1}-\Phi^{p-1}|^2|\nabla_Y\e|^2\Big)\rho_YdY\\
&&+e^{-\frac{c}{\sqrt{b}}}\int \frac{\e^2+|\nabla_Y\e|^2}{1+|z|^{2K}}r^2e^{-\frac{r^2}{2}}drdz\\
& \lesssim & b\|\e\|_{H^1_{\rho_Y}}^2+e^{-\frac{c}{\sqrt{b}}}\int \frac{\e^2+|\nabla_Y\e|^2}{1+|z|^{2K}}\rho_r dY
\eee
and hence the control of the first nonlinear term:
$$
|(\nabla F_1,\nabla \e_{(2)})_{L^2_{\rho_Y}}|\leq \delta \|\nabla \e_{(2)}\|_{L^2_{\rho_Y}}^2+c_\de b^2\sum a_{j,M}^2+e^{-\frac{c}{\sqrt{b}}}\int \frac{\e^2+|\nabla_Y\e|^2}{1+|z|^{2K}}\rho_r dY.
$$
For the second nonlinear term, we compute explicitly  
\bee
\nabla F_2(v)&=& p\nabla_Y v\left[(\Phit_b+v)^{p-1}-\Phit_b^{p-1}\right]\\
\nonumber & +& p\nabla_Y \Phit_b\left[(\Phit_b+v)^{p-1}-\Phit_b^{p-1}-(p-1)\Phit_b^{p-2}v\right].
\eee
 We estimate by homogeneity with the $L^\infty$ bound \eqref{poitwisebound}: 
 \be
 \label{pointwisenls2bis}
 |F_2(v)|\lesssim |v|^2,\,\,\,\, |\nabla_YF_2(v)|\lesssim |\nabla_Y v||v|+|v|^2
 \ee 
 and hence the bound using \eqref{poitwisebound} again:
\bee
&& |(\nabla F_2(v),\nabla \e_{(2)})_{L^2_{\rho_Y}}|+\left|\left(\frac{2}{p-1}F_2(v)-p\Phi_b^{p-1}F_2(v), \e_{(2)}\right)_{L^2_{\rho_Y}}\right|\\
&\lesssim& \int\left[|v||\nabla_Yv|+|v|^2\right]|\nabla\e_{(2)}|\rho_Y dY  +\int |\ep_{(2)}||v|^2\rho_Y dY\\
&\leq& \delta\|\e_{(2)}\|_{H^1_{\rho_Y}}^2+C_\delta\left[\int|v|^2|\nabla_Y v|^2\rho_Y dY+\int|v|^4\rho_Y dY\right]
\leq  \delta\|\e_{(2)}\|_{H^1_{\rho_Y}}^2+C_\delta\|v\|_{L^\infty}^2\|v\|_{H^1_{\rho_Y}}^2\\
& \leq & \delta\|\e_{(2)}\|_{H^1_{\rho_Y}}^2+C_\delta\|v\|_{L^\infty}^2\left[\|\e\|_{H^1_{\rho_Y}}^2+\sum|a_{j,M}|^2\right].
\eee
The collection of above bounds together with the spectral gap estimate \eqref{spectralgapestimate} and the orthogonality conditions \eqref{orthoe} injected into \eqref{cneknenonee} yields \eqref{differentialcontrolnormbis}.
\end{proof}


\subsection{Inner $W^{1,2q+2}$ bounds with polynomial localization in $z$}


The bounds of Lemma \ref{boundsexpo} rely in an essential way on the spectral gap estimate \eqref{spectralgapestimate} which demands a Gaussian like localization measure. Once these bounds are known, they can be turned into {\it polynomially} weighted bounds provided the weight is strong enough, {\it and the approximate solution of Lemma \ref{lemmaapproximate} has been developed to a sufficiently high order}. 

\begin{lemma}[Lyapunov control of polynomially weighted norms]
\label{boundspoly}
Let $K\geq K(q)$ a large enough constant and recall \eqref{defchi}: $$\nu_K(z)=\frac{1}{1+z^{2K}}.$$ Then there holds the pointwise differential bounds:
\bea
\label{differentialcontrolnormbis}
\nonumber&&\frac{d}{ds}\left\|\e\sqrt{\nu_K}\right\|_{L^2_{\rho_r}}^2+\frac K8\left\|\e\sqrt{\nu_K}\right\|_{L^2_{\rho_r}}^2+\left\|\sqrt{\nu_K}\nabla\e\right\|_{L^2_{\rho_r}}^2\\
 & \lesssim & \|\e\|_{H^1_{\rho_Y}}^2+b^{K+\frac 32}+ (\|v\|^2_{L^{\infty}}+b^2)\sum|a_{j,M}|^2,
\eea
\bea
\label{woneqbis}
\nonumber&&\frac{d}{ds}\left(\int |\nabla \e|^{2q+2}\nu_K\rho_rdY\right)+\frac{K}{16q+16}\int |\nabla \e|^{2q+2}\nu_K\rho_rdY\\
&\lesssim & \|\e\|_{L^2_{\rho_Y}}^{2q+2}+\int |\nabla\e|^{2q+2}\rho_YdY+b^{2q+K+\frac 32}+\sum |a_{j,M}|^{2q+2}.
\eea
\end{lemma}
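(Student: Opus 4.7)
The strategy parallels Lemma \ref{boundsexpo}, but since the spectral gap \eqref{spectralgapestimate} requires a Gaussian weight in $z$ which is absent here, coercivity is extracted directly from the scaling operator $\tfrac12\Lambda_Y$ through integration by parts against $\nu_K(z)\rho_r(y)$. I first differentiate $\|\e\sqrt{\nu_K}\|_{L^2_{\rho_r}}^2$ using \eqref{defequation} and analyze $-(\mathcal{L}\e,\e)_{L^2_{\nu_K\rho_r}}$. The Laplacian part of $\mathcal{L}$ yields $\|\sqrt{\nu_K}\nabla\e\|_{L^2_{\rho_r}}^2$ plus a residue $-\tfrac12\int\e^2\Delta(\nu_K\rho_r)dY$ bounded by $\|\e\|_{H^1_{\rho_Y}}^2+\|\e\sqrt{\nu_K}\|_{L^2_{\rho_r}}^2$. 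The scaling part, using the identity $-\tfrac12 z\nu_K'(z)=K\nu_K(1-\nu_K)$ and $\nabla\cdot(Y\nu_K\rho_r)=4\nu_K\rho_r+z\nu_K'\rho_r-\tfrac{r^2}{2}\nu_K\rho_r$, produces
\begin{equation*}
\left(\tfrac12\Lambda_Y\e,\e\right)_{L^2_{\nu_K\rho_r}} = \left(\tfrac{1}{p-1}-1\right)\|\e\sqrt{\nu_K}\|_{L^2_{\rho_r}}^2 +\tfrac{K}{2}\|\e\sqrt{\nu_K(1-\nu_K)}\|_{L^2_{\rho_r}}^2+\tfrac{1}{8}\|r\e\sqrt{\nu_K}\|_{L^2_{\rho_r}}^2.
\end{equation*}
Since $\nu_K(1-\nu_K)\geq\tfrac12\nu_K$ for $|z|\geq 1$, while on $|z|\leq 1$ one has $\rho_r\lesssim\rho_Y$ so the missing mass is absorbed by $\|\e\|_{H^1_{\rho_Y}}^2$, and since the bounded potential $p\Phi^{p-1}$ and the constant-size term $(\tfrac{1}{p-1}-1)$ are dominated by the $K$-coercivity for $K$ large, this produces the $\tfrac{K}{8}$-coercivity alongside the full dissipative term $\|\sqrt{\nu_K}\nabla\e\|_{L^2_{\rho_r}}^2$.

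I then estimate the forcing and remainder terms. For $\Psit_b$, the decomposition \eqref{eq:decompoistionPsitbinPsitb0andG} splits it into two boundary pieces of size $O(b)$ supported on $|z|\geq \delta/\sqrt{b}$, which contribute $\lesssim b^2\int_{|z|\geq\delta/\sqrt{b}}\nu_K\,dz\lesssim b^{K+3/2}$ in $L^2_{\nu_K\rho_r}$-norm squared, and an interior piece $\Psit_b^{(0)}$ which by \eqref{esterrorinomegabis} after the change $Z=\sqrt{b}z$ contributes $\lesssim b^{K+2}$ for $n$ large, hence is subdominant. Cauchy--Schwarz and $\delta$-Young absorb the cross term into the coercivity and yield the $b^{K+3/2}$ forcing on the right-hand side. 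The $\Mod$ term is handled by \eqref{controlMOd}--\eqref{estpourmod}; the linear term $L(\e)$ is controlled using \eqref{roughboundparameters} together with an integration by parts and Young's inequality; the nonlinear term is estimated pointwise by $|F_1|\lesssim b(1+z^2)|v|$ from \eqref{enkvnenveonoe|} and $|F_2|\lesssim |v|^2$ from \eqref{pointwisenls2bis}, with $v=\psi+\e$ so that the $\psi$-contribution becomes $(\|v\|_{L^\infty}^2+b^2)\sum|a_{j,M}|^2$ after Cauchy--Schwarz.

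The bound \eqref{woneqbis} is obtained along the same lines starting from \eqref{eqei} for $\e_i=\pa_i\e$: I compute $\tfrac{1}{2q+2}\tfrac{d}{ds}\int\e_i^{2q+2}\nu_K\rho_r\,dY$, integrate by parts on $-\Delta_Y\e_i$ to produce the nonnegative term $(2q+1)\int\e_i^{2q}|\nabla\e_i|^2\nu_K\rho_r$ together with harmless residues, and integrate by parts on $\tfrac12\Lambda_Y\e_i$ to extract the coercivity $\tfrac{K}{2(q+1)}\int\e_i^{2q+2}\nu_K(1-\nu_K)\rho_r$; absorbing $p\Phi^{p-1}$, the linear commutator $p(p-1)\Phi^{p-2}\pa_i\Phi\,\e$ and the $+1$ shift into this coercivity yields the announced $\tfrac{K}{16(q+1)}$-coefficient. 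The forcing $\pa_i\Psit_b$ contributes $b^{2q+K+3/2}$ by the same volumetric argument raised to the $2q+2$ power. The nonlinear term $\pa_iF(v)$ generates $\int|\nabla\e|^{2q+2}\rho_Y$ (from the inner $|z|\lesssim 1$ region where $\rho_r\lesssim\rho_Y$) together with $\sum|a_{j,M}|^{2q+2}$ from $\psi$, and $\|\e\|_{L^2_{\rho_Y}}^{2q+2}$ from a H\"older treatment of the lower-order remainder. The main obstacle is the precise book-keeping that makes $K/8$ (respectively $K/(16q+16)$) survive the absorption of $p\Phi^{p-1}$, of the Laplacian residue $\Delta(\nu_K\rho_r)$ and of the commutator with $L(\e)$, which constrains $K\geq K(q)$; and simultaneously verifying that the tails of $\Psit_b$ and $\pa_i\Psit_b$ weighted by $\nu_K$ produce exactly the exponents $b^{K+3/2}$ and $b^{2q+K+3/2}$, which forces $n\geq n(K)$ so that the high-order inner approximation of Lemma \ref{lemmaapproximate} outruns the polynomial weight's decay at the boundary $|Z|=2\delta$ of the support of $\Psit_b^{(0)}$.
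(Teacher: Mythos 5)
Your proposal is correct and follows essentially the same route as the paper: an energy identity against the weight $\nu_K\rho_r$, coercivity in $z$ extracted from the drift term acting on the polynomial weight (the paper's inequality \eqref{nceioneoneov}), absorption of the bounded potential for $K\geq K(q)$ large, the inner region in $z$ controlled by the Gaussian-weighted norms, and the $\Psit_b$, $\Mod$, $L(\e)$ and $F(v)$ contributions treated exactly as you indicate, with $n\geq n(K)$ yielding the $b^{K+\frac 32}$ and $b^{2q+K+\frac 32}$ forcings. One caution: your claim that the Laplacian residue $-\frac12\int \e^2\Delta(\nu_K\rho_r)\,dY$ is bounded by $\|\e\|_{H^1_{\rho_Y}}^2+\|\e\sqrt{\nu_K}\|_{L^2_{\rho_r}}^2$ is not valid in the direction needed, since in the energy identity it contributes $+\frac18\int r^2\e^2\nu_K\rho_r\,dY$; this term must be cancelled against the identical $\frac18\|r\e\sqrt{\nu_K}\|_{L^2_{\rho_r}}^2$ that you keep in your scaling identity (the paper sidesteps this by integrating $-\Delta_Y+\frac12 Y\cdot\nabla_Y$ by parts jointly, cf. \eqref{estlineaire}), after which your argument goes through, and the claimed $b^{K+2}$ for the interior piece of $\Psit_b$ is in fact of size $b^{K+\frac32}$ coming from the region $|Z|\sim\delta$, which is harmless for the stated bound.
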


\begin{remark} We more precisely need $K\gtrsim \|\Phi\|_{L^{\infty}}^{p-1}$ in order to absorb the potential terms in the energy estimates below. Also the constants in the rhs of \eqref{differentialcontrolnormbis}, \eqref{woneqbis} do not depend on K.
\end{remark}

\begin{proof}[Proof of Lemma \ref{boundspoly}] This follows from a brute force energy identity using the weight $\frac 1{1+|z|^{2K}}$ to overcome the bounded potential $\Phi^{p-1}$.\\

\noindent{\bf step 1} $L^2$ weighted bound. From \eqref{defequation}:
\bee
\nonumber &&\frac 12\frac{d}{ds}\|\e\sqrt{\nu_K}\|_{L^2_{\rho_r}}^2=(\pa_s\e,\nu_K\e)_{L^2_{\rho_r}}\\
&=&-(\mathcal{L}_Y\e,\nu_K \e)_{L^2_{\rho_r}}+(\Psit_b+L(\e)-\Mod+F(v),\nu_K\e)_{L^2_{\rho_r}}.
\eee
We integrate by parts to compute:
\bee
&&\int\left(-\Delta_Y\e+\frac 12Y\cdot\nabla_Y \e\right)\nu_K\e \rho_rdY\\
&=&\int\left[-\frac{1}{\rho_rr^2}\pa_r(r^2\rho_r\pa_r\e) -\pa_z^2\e+\frac 12z\pa_z\e\right]\nu_K(z)\e r^{2}\rho_rdrdz\\
& = & \int|\nabla_Y\e|^2\nu_K\rho_rdY-\int \e^2\left(\frac{(z\nu_K)'}{4}+\frac{\nu_K''}{2}\right)\rho_rdY
\eee
and hence
\bea
\label{estlineaire}
 -\left(\mathcal{L}_Y\e,\frac{\e}{1+z^{2K}}\right)_{L^2_{\rho_r}} &=& -\int|\nabla_Y\e|^2\nu_K\rho_rdY\\
\nonumber&& +\int \e^2\left(\left(-\frac{2}{p-1}+p\Phi^{p-1}\right)\nu_K+\frac{(z\nu_K)'}{4}+\nu_K''\right)\rho_rdY.
\eea
We now observe that for $|z|\geq z(K)$, 
\be
\label{nceioneoneov}
\frac{(z\nu_K)'}{4}+\frac{\nu_K''}{2}\leq -\frac{K}{4|z|^{2K}}
\ee
and hence for $K\gtrsim 1+\|\Phi\|_{L^{\infty}}$: 
\be
\label{estfondamental}
-(\mathcal{L}_Y\e,\nu_K\e)_{L^2_{\rho_r}}\leq -\int|\nabla_Y\e|^2\nu_KdY-\frac{K}{8}\left\|\e\sqrt{\nu_K}\right\|_{L^2_{\rho_r}}^2+C_K\|\e\|_{L^2_{\rho_Y}}^2,
\ee
where the last term controls the region $|z|\leq z(K)$. The leading order term is estimated from \eqref{eq:decompoistionPsitbinPsitb0andG} \eqref{esterrorinomegabis}:
\bee
|(\e,\nu_K \Psit_b)_{L^2_{\rho_r}}|&\leq& \delta\|\e\sqrt{\nu_K}\|_{L^2_{\rho_r}}^2+c_\delta\int_{|Z|\leq 2\delta} \frac{1}{1+|z|^{2K}}\left[b^{2n+2}+b^2|Z|^{4n+4}\right]dz\\
&&+ c_\de b^2\int_{|Z|\geq \de}\frac{dz}{1+|z|^{2K}}.
\eee
We estimate after changing variables $Z=z\sqrt{b}$:
\bee
\int_{|Z|\leq 2\delta} \frac{1}{1+|z|^{2K}}\left[b^{2n+2}+b^2|Z|^{4n+4}\right]\rho_rdz&\lesssim &b^{2n+2}+\int_{|Z|\leq 2\delta}\frac{b^2}{\sqrt{b}}b^{K}\frac{|Z|^{4n+4}}{|Z|^{2K}}dZ\\
&& +b^2\int_{|Z|\geq \delta}\frac{1}{\sqrt{b}}b^{K}\frac{1}{|Z|^{2K}}dZ\\
& \lesssim & b^{K+\frac 32}
\eee
provided $n\geq n(K)$ has been chosen large enough in Lemma \ref{lemmaapproximate}. We next integrate by parts like for the proof of \eqref{integrationbyparts} to compute:
\be
\label{Lterm}
|(\nu_K \e,\Lambda_Y\e)_{L^2_{\rho_r}}|\lesssim \int \nu_K(1+r^2)\e^2\rho_rr^2drdz\lesssim \int \nu_K (|\nabla\e|^2+\e^2)\rho_rr^2drdz
\ee
 where we used \eqref{estimportante} in the last step, and hence from \eqref{roughboundparameters}:
 $$|(L(\e),\nu_K \e|\lesssim b\left(\|\nabla\e\sqrt{\nu_K}\|_{L^2_{\rho_r}}^2+\|\e\sqrt{\nu_K}\|_{L^2_{\rho_r}}^2\right).$$ 
 To estimate the modulation equation terms, we first observe from \eqref{cnecneoneo} that 
 \be
 \label{cnekoneknvoe}
 \|\sqrt{\nu_K}\phi_{j,2M}\|_{L^2_{\rho_r}}+\|\Lambda\Phit_b\sqrt{\nu_K}\|_{L^2_{\rho_r}}+\|\pa_b\Phit_b\sqrt{\nu_K}\|_{L^2_{\rho_r}}\lesssim 1, \ \ -\ell_0\leq j\leq-1,\, 0\leq M\leq M(j) 
 \ee provided $K$ satisfies \eqref{eq:necessaryconditionK} and hence from \eqref{controlMOd}:
\bea
\label{boudnmodualiton}
\nonumber \|\Mod\sqrt{\nu_K}\|_{L^2_{\rho_r}} &\lesssim& \sum|(a_{j,M})_s-(\l_j+M)a_{j,M}|\\
\nonumber &+& \left|\lsl+\frac 12\right|\sum|a_{j,M}|+\left|\lsl+\frac 12-M(b)\right|+|b_s+bB(b)|\\
& \lesssim & b^{n+1}+b\left(\|\e\|_{L^2_{\rho_Y}}+\sum |a_{j,M}|\right)
\eea
which yields the bound:
$$|(\Mod,\nu_K\e)_{L^2_{\rho_r}}|\leq b^{2n+1}+b\|\e\|^2_{L^2_{\rho_Y}}+ b^2\sum |a_{j,M}|^2+\|\e\sqrt{\nu_K}\|^2_{L^2_{\rho_r}}.$$
The small linear term is estimated in brute force using $\|\Phit_b\|_{L^{\infty}}+\|\Phi\|_{L^{\infty}}\lesssim 1$:
$$|(F_1(v),\nu_K\e)_{L^2_{\rho_r}}|\lesssim \int (|\e|+|\psi|)\nu_K|\e|e^{-\frac{r^2}{2}}r^2dz\leq \frac{K}{20}\|\e\sqrt{\nu_K}\|_{L^2_{\rho_r}}^2+\sum |a_{j,M}|^2$$ 
where we used \eqref{cnekoneknvoe} in the last step. The nonlinear term is estimated as before:
\bee
|(F_2(v),\nu_K\e)_{L^2_{\rho_r}}|&\lesssim & \int \nu_K|\e|v^2\rho_rr^2drdz \leq\frac{K}{20}\int\nu_K|\e|^2\rho_rr^2drdz+\int \nu_K|v|^4\rho_rr^2drdz \\
& \leq & \frac{K}{20}\int\nu_K|\e|^2\rho_rr^2drdz+\|v\|^2_{L^{\infty}}\int\nu_K |v|^2\rho_rr^2drdz\\
& \leq&\frac{K}{20}\int\nu_K|\e|^2\nu_K\rho_rr^2drdz+C\|v\|^2_{L^{\infty}}\sum|a_{j,M}|^2,
\eee
where we used \eqref{cnekoneknvoe} in the last step, and \eqref{differentialcontrolnormbis} follows.\\

\noindent{\bf step 2} $\dot{W}^{1,2q+2}$ weighted bound. Let $\e_i=\pa_i\e$, for $i=1, 2, 3, 4$. We  compute from \eqref{eqei}:
\bee
&&\frac{1}{2q+2}\frac{d}{ds}\int \nu_K \e_i^{2q+2}\rho_rdY=\int \nu_K\e_i^{2q+1}\pa_s\e_i\\
&=& -\left((\L+1)\e_i, \nu_K\e_i^{2q+1}\right)_{L^2_{\rho_r}}+\left(\e_i^{2q+1},\nu_K\pa_i\left[\Psit_b-\Mod+L(\e)+F(v)\right]\right)_{L^2_{\rho_r}}\\
&& +\left(\e_i^{2q+1},\nu_Kp(p-1)\Phi^{p-2}\pr_i\Phi \ep\right)_{L^2_{\rho_r}}.
\eee
 
We integrate by parts to compute:
\bee
&&\int\left(-\Delta_Y\e_i+\frac 12Y\cdot\nabla_Y \e_i\right)\nu_K\e^{2q+1} \rho_rdY\\
&=&\int\left[-\frac{1}{\rho_rr^2}\pa_r(r^{2}\rho_r\pa_r\e_i)-\pa_z^2\e_i+\frac 12z\pa_z\e_i\right]\nu_K(z)\e_i^{2q+1} r^{2}\rho_rdrdz\\
& = & (2q+1)\int \e_i^{2q}|\pa_r\e_i|^2\nu_K\rho_rr^{2}drdz+(2q+1)\int (\pa_z\e_i)^2\nu_K\rho_r r^2drdz\\
&-& \frac{1}{2q+2}\int\e_i^{2q+2}\left(\frac{(z\nu_K)'}{2}+\nu_K''\right)\rho_rr^{2}drdz\\
& \geq & c\int|\nabla_Y(\e_i^{q+1})|^2\nu_K\rho_rdY+\frac{K}{8q+8}\int \e_i^{2q+2}\nu_K \rho_rdY-C_K\int \e_i^{2q+2}\rho_YdY
\eee
where we used \eqref{nceioneoneov} in the last step, and hence
\bee
 -\left((\L+1)\e_i, \nu_K\e_i^{2q+1}\right)_{L^2_{\rho_r}}&\leq& -\left\{c\int|\nabla_Y(\e_i^{q+1})|^2\nu_K\rho_rdY+\frac{K}{16q+16}\int \e_i^{2q+2}\nu_K \rho_rdY\right\}\\
 &+& C_K\int \e_i^{2q+2}\rho_YdY.
 \eee
The leading order term is estimated from \eqref{eq:decompoistionPsitbinPsitb0andG} \eqref{esterrorinomegabis}:
\bee
\left|(\e_i^{2q+1},\nu_K \pa_i\Psit_b)_{L^2_{\rho_r}}\right|&\lesssim& \int \nu_K\left(|\e_i|^{2q+2}+|\pa_i\Psit_b|^{2q+2}\right)\rho_rdY\\
&\lesssim& \int \nu_K|\e_i|^{2q+2}\rho_rdY
+\int_{|Z|\leq 2\delta} \frac{1}{1+|z|^{2K}}\left[b^{2n+2}+b|Z|^{4n+4}\right]^{2q+2}dz\\
&& +b^{2q+2}\int_{|Z|\geq \delta} \frac{1}{1+|z|^{2K}}dz\\
& \lesssim & \int \nu_K|\e_i|^{2q+2}\rho_rdY+b^{2q+K+\frac 32}.
\eee
We next integrate by parts and use \eqref{estimportante} to estimate:
$$
|(\nu_K \e_i^{2q+1},\pa_i\Lambda_Y\e)_{L^2_{\rho_r}}|\lesssim  \int \nu_K(1+r^2)\e_i^{2q+2}\rho_rdY\lesssim \int \nu_K \left[\e_i^{2q+2}+|\nabla_Y(\e_i^{q+1})|^2\right]\rho_rdY
$$
and hence from \eqref{roughboundparameters}:
$$|(\nu_K \e_i^{2q+1},\pa_iL(\e))_{L^2_{\rho_r}}|\lesssim b\int \nu_K \left[\e_i^{2q+2}+|\nabla_Y(\e_i^{q+1})|^2\right]\rho_rdY.$$ 
The modulation equation terms are estimated in brute force for $K\geq K(q)$ large enough from \eqref{controlMOd}:
\bee
|(\nu_K \e_i^{2q+1},\pa_i\Mod)_{L^2_{\rho_r}}|&\lesssim & \int \nu_K\e_i^{2q+2}\rho_rdY+\int |\pa_i\Mod|^{2q+2}\nu_K\rho_rdY\\&\lesssim & \int \nu_K\e_i^{2q+2}\rho_rdY+b^{2q+2}\left[\|\e\|_{L^2_{\rho_Y}}^{2q+2}+\sum |a_{j,M}|^{2q+2}\right]+b^{2q+K+\frac 32}.
\eee
Also, we have
 \bee
\left|\left(\e_i^{2q+1},\nu_Kp(p-1)\Phi^{p-2}\pr_i\Phi \ep\right)_{L^2_{\rho_r}}\right| \lesssim  \int |\e_i|^{2q+1}|\e|\nu_K\rho_rdY\lesssim  \int |\e_i|^{2q+2}\nu_K\rho_rdY.
 \eee 
For the nonlinear term, we estimate in brute force from \eqref{pointwisenls2}:
$$
|(\nu_K \e_i^{2q+1},\pa_iF(v))_{L^2_{\rho_r}}|\lesssim \int (|v|+|\nabla_Yv|)\nu_K |\e_i|^{2q+1}\rho_rdY\lesssim \int |\e_i|^{2q+2}\nu_K\rho_rdY+\sum |a_{j,M}|^{2q+2}.$$
The collection of above bounds concludes the proof of \eqref{woneqbis}.
\end{proof}


\subsection{Outer global $W^{1,2q+2}$ bound}


We recall $$v=\e+\psi$$ and now aim at propagating an {\it unweighted global} $W^{1,2q+2}$ decay estimate for $v$. We rewrite \eqref{vequationbizarre} as
\be
\label{vequationbis}
\pa_sv-\Delta_Y v -\lsl \Lambda_Y v=h, 
\ee
with $$h=\Psit_b+\left(\lsl+\frac 12-M(b)\right)\Lambda \Phit_b-(b_s+bB(b))\pa_b\Phit_b+\widehat{F}(v), \ \ \widehat{F}=(\Phit_b+v)^p-\Phit_b^p.$$
 
\begin{lemma}[Global $W^{1,2q+2}$ bound]
\label{lemmehtwoweight}
There holds the Lyapunov type monotonicity formula
\bea
\label{decaylp}
\frac{d}{ds}\left\{\int |v|^{2q+2}dY\right\}+c\int |v|^{2q+2}dY\lesssim b^{2q+\frac 32}+\frac{1}{b^K}\int \frac{|v|^{2q+2}}{1+|z|^{2K}}\rho_rdY,
\eea
\bea
\label{decaylpbis}
&&\frac{d}{ds}\left\{\int |\nabla_Yv|^{2q+2}dY\right\}+c \int|\nabla_Yv|^{2q+2}dY\\
\nonumber &\lesssim & b^{2q+\frac 32}+\frac{1}{b^K}\int \frac{|\nabla_Yv|^{2q+2}+|v|^{2q+2}}{1+|z|^{2K}}\rho_rdY,
\eea
for some universal constant $c(q)>0$.
\end{lemma}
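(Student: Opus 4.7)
The plan is to run direct unweighted $L^{2q+2}$ and $\dot W^{1,2q+2}$ energy estimates on \eqref{vequationbis}, exploiting the dissipative and self-similar structure of $-\Delta_Y-\lsl\Lambda_Y$. For \eqref{decaylp}, I multiply \eqref{vequationbis} by $|v|^{2q}v$ and integrate on $\mathbb{R}^4$. Integration by parts gives the dissipation
$$-\int \Delta_Y v\cdot |v|^{2q}v\,dY = (2q+1)\int |v|^{2q}|\nabla_Y v|^2\,dY\geq 0,$$
while the scaling part yields $-\int\Lambda_Y v\cdot |v|^{2q}v\,dY = -\bigl(\tfrac{2}{p-1}-\tfrac{2}{q+1}\bigr)\int |v|^{2q+2}dY$. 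Combined with $-\lsl=\tfrac12+O(b)$ from Lemma \ref{lemmamodulation}, this produces the coercive coefficient $c$ of the statement for $q\geq q(p)$ large enough, the $O(b)$ correction being absorbable for $b\ll 1$. For \eqref{decaylpbis} I differentiate \eqref{vequationbis} using the commutator $[\pa_i,\Lambda_Y]=\pa_i$, obtaining $\pa_s\pa_iv-\Delta_Y\pa_iv-\lsl\Lambda_Y\pa_iv=\lsl\pa_iv+\pa_ih$, and repeat the computation on $|\nabla_Y v|^{2q}\nabla_Y v$; the extra term $\lsl|\nabla_Y v|^{2q+2}$ is favourable since $\lsl<0$.

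I then split $h=\Psit_b+(\lsl+\tfrac12-M(b))\Lambda\Phit_b-(b_s+bB(b))\pa_b\Phit_b+\widehat F(v)$ and estimate each piece via Young. The contribution of $\Psit_b$ is extracted from its decomposition \eqref{eq:decompoistionPsitbinPsitb0andG}: by \eqref{esterrorinomegabis} and the change of variables $Z=\sqrt{b}z$ (which introduces a $b^{-1/2}$ Jacobian), $\|\Psit_b^{(0)}\|_{L^{2q+2}}^{2q+2}\lesssim b^{(n+1)(2q+2)-1/2}\ll b^{2q+3/2}$ for $n\geq n(K,q)$; the outer pieces $b(\chi_\delta-1)\pa_Z^2G$ and $B(b)(\chi_\delta-1)Z\pa_ZG$ produce $\|\cdot\|_{L^{2q+2}}^{2q+2}\lesssim b^{2q+3/2}$ (from $b^{2q+2}\cdot b^{-1/2}$ times an $r,Z$-integrable profile since $q$ is large). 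The modulation coefficients are bounded by \eqref{controlMOd} and combined with the spatial decay of $\Lambda\Phit_b$ and $\pa_b\Phit_b$ (cf.\ \eqref{estpourmod}, \eqref{pintwisepa}) produce analogous controllable terms.

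The main obstacle is the nonlinear term. Using $\|v\|_{L^\infty}\ll 1$ from \eqref{poitwisebound} and the fact that $p\geq 5$, one writes
$$|\widehat F(v)|\lesssim \Phit_b^{p-1}|v|+\|v\|_{L^\infty}^{p-1}|v|,$$
so that the contribution $\int \|v\|_{L^\infty}^{p-1}|v|^{2q+2}dY$ is directly absorbed into the coercive $c\int|v|^{2q+2}dY$. The genuinely problematic term is $\int\Phit_b^{p-1}|v|^{2q+2}dY$, which cannot be absorbed globally since $\|\Phit_b^{p-1}\|_{L^\infty}\lesssim 1$. I split $\mathbb{R}^4$ into three regions using the pointwise bound $\Phit_b^{p-1}\lesssim (1+r^2+bz^2)^{-1}$: on $\{r\geq R_0\}$ the factor is $\lesssim R_0^{-2}$, taken small relative to $c$ by fixing $R_0$ large; on $\{|z|\geq Z_0,\,r\leq R_0\}$ with $Z_0=b^{-1/2-\eta}$ one has $\Phit_b^{p-1}\lesssim b^{2\eta}$, also absorbed; on the residual cylinder $\{r\leq R_0,\,|z|\leq Z_0\}$ one inserts the weights $\rho_r$ and $(1+z^{2K})^{-1}$, paying a constant $e^{R_0^2/4}(1+Z_0^{2K})\lesssim b^{-K}$ (after suitably tuning $K$ versus $\eta$), which produces exactly the term $\tfrac{1}{b^K}\int\frac{|v|^{2q+2}}{1+|z|^{2K}}\rho_r\,dY$ of \eqref{decaylp}.

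The gradient version proceeds identically after writing
$$\nabla_Y\widehat F(v)=p\bigl[(\Phit_b+v)^{p-1}-\Phit_b^{p-1}\bigr]\nabla_Y\Phit_b+p(\Phit_b+v)^{p-1}\nabla_Y v,$$
so that by Taylor expansion and $L^\infty$ smallness one is reduced to estimating $\int\Phit_b^{p-1}|v||\nabla v|^{2q+1}|\nabla\Phit_b|$ and $\int\Phit_b^{p-1}|\nabla v|^{2q+2}$; splitting each along the same cylinder yields the two weighted contributions in the RHS of \eqref{decaylpbis}. The delicate step in this argument is the cylinder-cutoff bookkeeping, which both dictates the exponent $K$ and clarifies why the inner weighted controls of Lemma \ref{boundspoly} must be closed before the global $W^{1,2q+2}$ bound.
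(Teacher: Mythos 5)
Your strategy is essentially the paper's: the unweighted $L^{2q+2}$ and $\dot W^{1,2q+2}$ energy identities with coercivity coming from $\lsl\big(\frac{2}{p-1}-\frac{2}{q+1}\big)<0$ for $q\geq q(p)$, H\"older on the forcing $h$, the decomposition of $\Psit_b$, and absorption of the nonlinear term outside a cylinder while the weighted term on the right-hand side pays for the inside. Two quantitative points, however, do not come out as you claim. First, the bound $\|\Psit_b^{(0)}\|_{L^{2q+2}}^{2q+2}\lesssim b^{(n+1)(2q+2)-1/2}$ ignores the $b|Z|^{2n+2}$ component of \eqref{esterrorinomegabis}, which on $|Z|\leq 2\delta$ is only $O(b)$; the correct bound is $\lesssim b^{2q+\frac32}$, which is still exactly the allowed size, so this slip is harmless. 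Second, and more seriously for the statement as written, your cylinder $\{r\leq R_0,\ |z|\leq Z_0\}$ with $Z_0=b^{-1/2-\eta}$ costs $e^{R_0^2/4}(1+Z_0^{2K})\sim b^{-K(1+2\eta)}$ when you insert the weights $\rho_r$ and $(1+z^{2K})^{-1}$, and no tuning of $K$ versus $\eta$ makes this $\lesssim b^{-K}$: the same $K$ appears both in the prefactor and in the weight of the right-hand side of \eqref{decaylp}, so as written you only obtain a weaker inequality than the one stated. The enlargement of the cylinder is unnecessary: since $\Phit_b^{p-1}\lesssim (1+r^2+bz^2)^{-1}$, already on $\{|Z|\geq A\}$, i.e. $|z|\geq Ab^{-1/2}$ with $A$ a large \emph{constant}, one has $\Phit_b^{p-1}\lesssim (1+A^2)^{-1}\leq \delta$ (this is the role of \eqref{cneokcneoneon} in the paper); taking the residual cylinder $\{r\leq A,\ |Z|\leq A\}$ then yields the cost $C(A)b^{-K}$ and hence \eqref{decaylp}, \eqref{decaylpbis} exactly as stated. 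Alternatively, your weaker prefactor $b^{-K(1+2\eta)}$ would still close the bootstrap in the conclusion provided $2K\eta<1$, but then the lemma would have to be restated. With this one-line correction of the cutoff your argument is complete and coincides with the paper's proof.
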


\begin{proof}[Proof of Lemma \ref{lemmehtwoweight}] {\bf step 1} $L^{2q+2}$ bound. We compute from \eqref{vequationbis}:
$$\frac1{2q+2}\frac d{ds}\int v^{2q+2}dY=\int v^{2q+1}\pa_sv =\int v^{2q+1}\left[\Delta_Y v+\lsl\Lambda v+h\right]dY.$$
The linear term is computed by integration by parts:
\bea
\label{nceoineone}
&&\int v^{2q+1}\left(\Delta_Y v+\lsl\Lambda v\right)dY\\
\nonumber &  = & -(2q+1)\int v^{2q}|\nabla_Yv|^2dY+\lsl\left(\frac{2}{p-1}-\frac{4}{2q+2}\right)\int  v^{2q+2}dY.
\eea
Observe using \eqref{roughboundparameters} that
\bee
\lsl\left[\frac{2}{p-1}-\frac{4}{2q+2}\right]\int  v^{2q+2}dY &=& -\left(\frac 12+O(b)\right)\left(\frac{2}{p-1}-\frac{4}{2q+2}\right)\int  v^{2q+2}dY\\
&\leq& -c\int  v^{2q+2}dY
\eee
where $c>0$ for $b$ small enough and $q$ large enough. Next by H\"older:
$$\int v^{2q+1}hdY\leq \delta v^{2q+2}dY+c_\delta\int h^{2q+2}dY$$ and we now estimate the $h$ terms. First from \eqref{eq:decompoistionPsitbinPsitb0andG} \eqref{esterrorinomegabis}:
\bee
\int |\Psit_b|^{2q+2}&\lesssim&\int_{|Z|\leq 2\delta} \left|\frac{b^{n+1}+b|Z|^{2n+2}}{\la r\ra^{\frac{2}{p-1}-\frac{1}{n}}}\right|^{2q+2}dY\\
&&+b^{2q+2}\int_{|Z|\geq \delta} (|\pr^2_ZG|+|Z\pr_ZG|)^{2q+2}dY\\
&\lesssim& b^{2q+2-\frac 12}
\eee
for $q$ large enough, where we used in the last inequality the fact that in view of \eqref{boundphir} \eqref{cnekoneoneo} \eqref{formulag}, we have
$$|\pr^2_ZG|+|Z\pr_ZG| \lesssim \frac{1}{\left(1+r^2+|Z|^2\right)^{\frac{1}{p-1}}}.$$
In order to treat the modulation equation terms, we compute
\bee
|\Lambda_Y \Phi_b|=\left|\frac{1}{\mu^{\frac{2}{p-1}}}\left(1-\frac{Z^2}{1+Z^2}\right)\Lambda \Phi\left(\frac{r}{\mu}\right)\right|\lesssim \frac{1}{\mu^{\frac 2{p-1}}+\la r\ra^{\frac{2}{p-1}}}\lesssim \frac{1}{(1+r^2+bz^2)^{\frac{1}{p-1}}}
\eee 
and hence for $q$ large enough using \eqref{decayv}:
$$ \int |\Lambda_Y \Phit_b|^{2q+2}\lesssim \frac{1}{\sqrt{b}}.$$
Similarly:
$$|\pa_b\Phi_b|\lesssim \frac{1}{2b}|Z\pa_ZG|\lesssim \frac{1}{b}\frac{Z^2}{1+Z^2}\frac{1}{\mu^{\frac 2{p-1}}}\Lambda\Phi\left(\frac{r}{\mu}\right)\lesssim \frac{1}{b}\frac{1}{(1+r^2+bz^2)^{\frac{1}{p-1}}}$$ and hence 
$$\int |\pa_b \Phit_b|^{2q+2}dY\lesssim \frac{1}{\sqrt{b}b^{2q+2}}.$$ We conclude using \eqref{controlMOd}:
\bee
&&\int \left|\left(\lsl+\frac 12-M(b)\right)\Lambda \Phit_b-(b_s+bB(b))\pa_b\Phit_b\right|^{2q+2}dY\\
&\lesssim& \frac{1}{\sqrt{b}}\left(b^n+\|\e\|_{L^2_{\rho_Y}}+\sum|a_{j,M}|\right)^{2q+2}\lesssim b^{\frac n2}
\eee
where we used in the last inequality the bounds \eqref{bparmebis} \eqref{controlunstable} \eqref{poitwiseboundhtwo}.

We now turn to the nonlinear term whose control relies on the polynomially weighted bounds of Lemma \ref{boundspoly}. Indeed, we estimate by homogeneity $$|\widehat{F}(v)|\lesssim |\Phit_b||v|+\|v\|_{L^{\infty}}|v|$$ from which:
$$
\int |\widehat{F}(v)|^{2q+2}dY\lesssim \int |\Phit_b|^{2q+2}|v|^{2q+2}+\|v\|_{L^\infty}^{2q+2}\int |v|^{2q+2}dY.$$ The second term is treated thanks to $\|v\|_{L^{\infty}}\ll 1$, and we split the first term using 
\be
\label{cneokcneoneon}
\|\pa_i^k\Phit_b\|_{L^{\infty}(|Z|\geq A)}+\|\pa_i^k\Phit_b\|_{L^{\infty}(r\geq A)}<\delta\ll1, \ \ k=0,1
\ee for $A$ large enough, and hence
\bee
&&\int |\Phit_b|^{2q+2}|v|^{2q+2}\\
& \lesssim & \int_{|Z|\geq A} |\Phit_b|^{2q+2}|v|^{2q+2}+\int_{r\geq A} |\Phit_b|^{2q+2}|v|^{2q+2}+\int_{|Z|\leq A,r\leq A}|\Phit_b|^{2q+2}|v|^{2q+2}\\
& \lesssim&  \delta\int |v|^{2q+2}dY+\frac{C(A)}{b^K}\int \frac{|v|^{2q+2}}{1+|z|^{2K}}\rho_rdY.
\eee
The collection of above bounds concludes the proof of \eqref{decaylp}.\\

\noindent{\bf step 2} $\dot{W}^{1,2q+2}$ bound. Let $v_i=\pa_iv$ for $i=1, 2, 3, 4$. Then from \eqref{vequationbis}:
$$\pa_sv_i-\Delta_Y v_i -\lsl \left[v_i+\Lambda_Y v_i\right]=\pa_ih,$$ and hence: 
$$\frac1{2q+2}\frac d{ds}\int v_i^{2q+2}dY=\int v_i^{2q+1}\pa_sv_i =\int v_i^{2q+1}\left[\Delta_Y v_i+\lsl(v_i+\Lambda_Y v_i)+\pa_ih\right]dY.$$ The linear term is computed from \eqref{nceoineone}:
\bee
&&\int v_i^{2q+1}\left[\Delta_Y v_i+\lsl(v_i+\Lambda_Y v_i)\right]dY\\
&=& -(2q+1)\int v_i^{2q}|\nabla_Yv_i|^2dY+\lsl\left(1+\frac{2}{p-1}-\frac{4}{2q+2}\right)\int  v_i^{2q+2}dY\\
&=& -\left(\frac 12+O(b)\right)\left(1+\frac{2}{p-1}-\frac{4}{2q+2}\right)\int  v_i^{2q+2}dY\\
&\leq& -c\int  v_i^{2q+2}dY.
\eee
Next, we have by H\"older:
$$\int v_i^{2q+1}\pa_ihdY\leq \delta v_i^{2q+2}dY+c_\delta\int (\pa_ih)^{2q+2}dY$$ 
and we now estimate the $\pr_ih$ terms.
From \eqref{esterrorinomegabis}:
\bee
\int |\pa_i\Psit_b|^{2q+2}&\lesssim&\int_{|Z|\leq 2\delta} \left|\frac{b^{n+1}+b|Z|^{2n+2-1}}{\la r\ra^{\frac{2}{p-1}-\frac{1}{n}}}\right|^{2q+2}dY\\
&&+b^{2q+2}\int_{|Z|\geq \delta} (|\pr_i\pr^2_ZG|+|\pr_i(Z\pr_ZG)|)^{2q+2}dY\\
&\lesssim& b^{2q+2-\frac 12}
\eee
for $q$ large enough. For the modulation equation terms, we estimate in brute force as above
$$ \int |\pa_i\Lambda_Y \Phit_b|^{2q+2}\lesssim \frac{1}{\sqrt{b}}, \ \ \int |\pa_i\pa_b \Phit_b|^{2q+2}dY\lesssim \frac{1}{\sqrt{b}b^{2q+2}}$$ from which using \eqref{controlMOd}:
$$\int \left|\left(\lsl+\frac 12-M(b)\right)\pa_i\Lambda \Phit_b-(b_s+bB(b))\pa_i\pa_b\Phit_b\right|^{2q+2}dY\lesssim b^{\frac n2}.$$
We now estimate the nonlinear term by homogeneity:
\bee
|\pa_i\widehat{F}(v)|\lesssim |\pa_i\Phit_b||v|+|\pa_iv|\left[\|v\|_{L^{\infty}}^{p-1}+|\Phit_b|^{p-1}\right]
\eee from which for $A$ large enough using \eqref{cneokcneoneon}:
\bee
\int |\pa_i\widehat{F}(v)|^{2q+2}dY&\lesssim& \delta \int (|v|^{2q+2}+|v_i|^{2q+2})dY+\int_{|Z|\leq A,r\leq A}(|v|^{2q+2}+|v_i|^{2q+2})dY\\
& \lesssim &  \delta \int (|v|^{2q+2}+|v_i|^{2q+2})dY+\frac{C(A)}{b^K}\int \frac{|v|^{2q+2}+|v_i|^{2q+2}}{1+|z|^{2K}}\rho_rdY.
\eee
The collection of above bounds concludes the proof of \eqref{decaylpbis}.
\end{proof}


\subsection{Conclusion}\label{sec:conclusionproofpropboot}


We are now in position to conclude the proof of Proposition \ref{bootstrap} which then easily implies Theorem \ref{thmmain}.\\

\begin{proof}[Proof of Proposition \ref{bootstrap}] We recall that we are arguing by contradiction assuming \eqref{assumptioncontradiction}. We first  show that the bounds \eqref{controlsclaing}, \eqref{bparmebis}, \eqref{poitwiseboundhtwo}, \eqref{neneoneovneoneov}, \eqref{cekneoneoncoeno}, \eqref{controlsobolev} can be improved on $[s_0,s^*]$, and then, the existence of the data $a_{j,M}(0)$ follows from a classical topological argument \`a la Brouwer.\\

\noindent{\bf step 1} Improved control of the geometrical parameters. We estimate from \eqref{roughboundparameters}:
$$\frac12(1-\delta)<-\lsl<\frac 12(1+\delta), \ \ 0<\delta\ll1$$ which implies $$(\l(s)e^{\frac s4})'<0, \ \ (\l(s)e^{2s})'>0$$
and hence using \eqref{scalingsmall} $$0<\l(s)<\l(s_0)e^{\frac{s_0}{4}}e^{-\frac{s}{4}}<\frac 12 e^{-\frac{s}{4}},$$ and \eqref{controlsclaing} is improved. For the $b$ parameter, we estimate from \eqref{controlMOd}, \eqref{bparmebis}, \eqref{controlunstable}, \eqref{poitwiseboundhtwo} and \eqref{calculloi}:
$$|b_s+c_1b^2|\lesssim \frac{1}{s^3}, \ \ c_1>0.$$ Hence $$\left|\frac{d}{ds}\left(-\frac 1b\right)+c_1\right|\lesssim \frac{1}{s^3b^2}\lesssim \frac{1}{s}$$ which time integration using \eqref{bparme} yields:
$$\left|-\frac{1}{b(s)}+c_1s\right|\leq \sqrt{s}\ \ \mbox{and thus}\ \ \frac{1}{2c_1 s}<b(s)<\frac{2}{c_1s}$$ which improves \eqref{bparmebis}.\\

\noindent{\bf step 2} Improved Sobolev bounds. We now systematically integrate in time the monotonicity formulas of Lemmas \ref{boundsexpo}, \ref{boundspoly}, \ref{lemmehtwoweight} to improve the bounds \eqref{poitwiseboundhtwo}-\eqref{controlsobolev}.\\

\noindent{\em Exponential norms}. We rewrite \eqref{differentialcontrolnorm} using \eqref{bparmebis}, \eqref{controlunstable}, \eqref{poitwiseboundhtwo}, \eqref{poitwisebound} and obtain:
$$
\frac{d}{ds}\|\e\|_{L^2_{\rho_Y}}^2+c\|\e\|_{L^2_{\rho_Y}}^2 \lesssim \frac{1}{s^{n+\delta_q}}
$$
which time integration with \eqref{poitwiseboundhtwoinitial} yields:
\bea
\label{ltwoboot}
\nonumber \|\e(s)\|_{L^2_{\rho_Y}}^2&\leq& \|\e(0)\|_{L^2_{\rho_Y}}^2e^{-c(s-s_0)}+e^{-s}\int_{s_0}^{s}\frac{e^{c\sigma}}{\sigma^{n+\delta_q}}d\sigma\lesssim  \frac{1}{s^{2n}}\left(\frac{s}{s_0}\right)^{2n}e^{-c(s-s_0)}+\frac{1}{s^{n+\delta_q}}\\
& \lesssim & \frac{1}{s^{n+\delta_q}}
\eea
where we used 
$$
\left(\frac{s}{s_0}\right)^{2n}e^{-c(s-s_0)}\leq 1\ \ \mbox{for}\ \ s\geq s_0
$$
since $s\mapsto \left(\frac{s}{s_0}\right)^{2n}e^{-c(s-s_0)}$ is non increasing on $[s_0,+\infty)$ for $s_0(n)$ large enough. 
We similarly rewrite \eqref{differentialcontrolnormbisbis}:
$$\frac{d}{ds}\|\L_Y\e\|_{L^2_{\rho_Y}}^2+c\|\L_Y\e\|_{L^2_{\rho_Y}}^2 \lesssim \frac{1}{s^{n+\delta_q}}$$ which similarly yields with \eqref{poitwiseboundhtwoinitial}: 
\be
\label{ltwobootbis}
\|\L_Y\e(s)\|_{L^2_{\rho_Y}}^2\lesssim \frac{1}{s^{n+\delta_q}}.
\ee
We now recall $$(\mathcal L_Y\e,\e)_\rho=\|\nabla \e\|^2_{L^2_{\rho_Y}}+\int\left(\frac 2{p-1}-p\Phi^{p-1}\right)|\e|^2\rho_Y dY$$ which together with \eqref{ltwoboot}, \eqref{ltwobootbis} implies: 
$$
\|\nabla \e\|_{L^2_{\rho_Y}}^2\leq (\mathcal L_Y\e,\e)_{L^2_{\rho_Y}}+C\|\e\|_{L^2_{\rho_Y}}^2\lesssim  \frac{1}{s^{n+\delta_q}}.
$$
This implies from \eqref{estimationapoinds}:
$$\|\Delta \e\|_{L^2_{\rho_Y}}^2\lesssim  \|\L_Y\e(s)\|_{L^2_{\rho_Y}}^2+\|\e\|_{H^1_{\rho_Y}}^2\lesssim \frac{1}{s^{n+\delta_q}}$$ which together with \eqref{ltwoboot}, \eqref{ltwobootbis} yields 
\be
\label{cneineonoec}
\|\e\|_{H^2_{\rho_Y}}^2\lesssim \frac{1}{s^{n+\delta_q}}<\frac 1{2s^{n}}
\ee
which improves \eqref{poitwiseboundhtwo}. Similarly from \eqref{woneqlocbound}, \eqref{bparmebis}, \eqref{controlunstable}, \eqref{poitwiseboundhtwo}:
$$\frac{d}{ds}\|\nabla\e\|^{2q+2}_{L^{2q+2}_{\rho_Y}}+c\|\nabla\e\|^{2q+2}_{L^{2q+2}_{\rho_Y}}\lesssim \frac{1}{s^{(q+1)(n+1)}}$$ which time integration using \eqref{poitwiseboundhtwoinitial} ensures: $$\|\nabla\e\|^{2q+2}_{L^{2q+2}_{\rho_Y}}\lesssim \frac{1}{s^{(q+1)(n+1)}}\leq \frac{1}{2s^{n(q+1)}}$$ and \eqref{neneoneovneoneov} is improved.\\

\noindent{\em Polynomial norms}.  We rewrite \eqref{differentialcontrolnormbis} using \eqref{bparmebis}, \eqref{controlunstable}, \eqref{poitwiseboundhtwo}, \eqref{controlsobolev} as:
\bee
\nonumber&&\frac{d}{ds}\left\|\e\sqrt{\nu_K}\right\|_{L^2_{\rho_r}}^2+\frac{K}{8}\left\|\e\sqrt{\nu_K}\right\|_{L^2_{\rho_r}}^2\lesssim \frac{1}{s^{K+\frac 32}}
\eee
for $n\geq n(K)$ large enough, which time integration using  \eqref{cekneoneoncoenoinitial} yields:
$$
\left\|\e(s)\sqrt{\nu_K}\right\|_{L^2_{\rho_r}}^2 \lesssim\frac{1}{s^{K+\frac 32}}<\frac{1}{2s^{K+1}}.$$
Similarly from \eqref{woneqbis}:
$$\frac{d}{ds}\left(\int |\nabla \e|^{2q+2}\nu_K\rho_rdY\right)+\frac{K}{16q+16}\int |\nabla \e|^{2q+2}\nu_K\rho_rdY\lesssim \frac{1}{s^{K+2q+\frac 32}}$$ which together with \eqref{cekneoneoncoenoinitial} ensures: $$\int |\nabla \e(s)|^{2q+2}\nu_K\rho_rdY\lesssim \frac{1}{s^{K+2q+\frac 32}}< \frac{1}{2s^{K+2q+1}}.$$
This yields an improvement of \eqref{cekneoneoncoeno}.\\

\noindent{\em Global norms}. We use the lossy bound
\bee
\int\frac{ |v|^{2q+2}}{1+|z|^{2K}}\rho_rdY&\lesssim&  \|v\|_{L^\infty}^{2q}\int \frac{|v|^{2}}{1+|z|^{2K}}\rho_rdY\lesssim\int\frac{ |\e|^{2}}{1+|z|^{2K}}\rho_rdY+\sum|a_{j,M}|^2\\
& \lesssim &  \frac{1}{s^{K+1}}
\eee
which injected into \eqref{decaylp}, \eqref{decaylpbis} yields
$$
\frac{d}{ds}\|v\|_{W^{1,2q+2}}^{2q+2}+c\|v\|_{W^{1,2q+2}}^{2q+2}\lesssim \frac{1}{s^{2q+\frac 32}}+\frac{1}{b^K}\left[\frac{1}{s^{K+1}}\right]\lesssim \frac 1s.
$$
Integrating in time using \eqref{controlsobolevinitial} ensures $$\|v(s)\|_{W^{1,2q+2}}^{2q+2}\lesssim \frac{1}{s}$$ which improves \eqref{controlsobolev} provided $\delta_q$ has been chosen small enough.\\

\noindent{\bf step 3} Brouwer fixed point argument. In view of the above improvements of \eqref{controlsclaing}, \eqref{bparmebis}, \eqref{poitwiseboundhtwo}, \eqref{neneoneovneoneov}, \eqref{cekneoneoncoeno}, \eqref{controlsobolev}, we conclude from an elementary continuity argument that \eqref{assumptioncontradiction} implies the exit condition:
\be
\label{exitcondition}
\sum_{j=-\ell_0}^{-2}\sum_{M=0}^{M(j)}(a_{j,M}(s^*))^2=\frac{1}{(s^*)^{n}}.
\ee
On the other hand, we estimate from \eqref{controlMOd}, \eqref{poitwiseboundhtwo}:
$$\sum|(a_{j,M})_s+(\l_j+M)a_{j,M}|\lesssim \frac{1}{s^{\frac n2+1}}.$$
Also, from the non degeneracy \eqref{hypspectral}, there exists $c>0$ such that
$$\l_j+M\leq -c<0,\,\,\,\, -\ell_0\leq j\leq -2, \, 0\leq M\leq M(j)$$
and hence
\bee
&&\frac{d}{ds}\sum s^n(a_{j,M}(s))^2=s^n\sum a_{j,M}\left[(a_{j,M})_s+\frac{n}{s}a_{j,M}\right]\\
& = & s^n\sum a_{j,M}\left[(a_{j,M})_s+(\l_j+M)a_{j,M}+\frac{n}{s}a_{j,M}\right]-s^n\sum (\l_j+M)|a_{j,M}|^2\\
& \geq & cs^n\sum |a_{j,M}|^2+O\left(\frac{s^n}{s^{n+1}}\right)
\eee
which implies from \eqref{exitcondition} the outgoing flux condition: $$\frac{d}{ds}\left\{\sum_{j=-\ell_0}^{-2}\sum_{M=0}^{M(j)} s^n(a_{j,M}(s))^2\right\}_{|s=s^*}>\frac{c}{2}>0.$$ 
We conclude from standard argument that the map
$$(a_{j,M}(0)s_0^n)_{  -\ell_0\leq j\leq -2, \, 0\leq M\leq M(j)}\to (a_{j,M}(s_*)s_*^n)_{  -\ell_0\leq j\leq -2, \, 0\leq M\leq M(j)}$$
is continuous on the unit ball of $\mathbb{R}^N$ where
$$N=\sum_{j=-\ell_0}^{-2}(1+M(j))$$
and the identity on its boundary a contradiction to Brouwer's Theorem. This concludes the proof of Proposition \ref{bootstrap}.
\end{proof}

We are now in position to conclude the proof of Theorem \ref{thmmain}.

\begin{proof}[Proof of Theorem \ref{thmmain}] Let an initial data as in Proposition \ref{bootstrap}. The $s$ time being global, the integration of the modulation equations \eqref{controlMOd} with the bounds \eqref{poitwiseboundhtwo} easily leads to the laws $$b(s)=\frac{1}{c_1s}+O\left(\frac 1{s^2}\right)$$ and $$\lsl=-\frac 12+M(b)+O\left(\frac 1{s^2}\right)=-\frac 12+\frac{1}{s}+O\left(\frac{1}{s^2}\right)$$
where we used the fact that $d_1=1$ in the last equality in view of \eqref{calculloi}. Hence $$\l(s)=e^{-\frac{s}{2}+O(\log s)}=e^{-\frac{s}{2}}\left[1+O\left(\frac{\log s}{s}\right)\right].$$ This implies that the life time of the solution is finite$$ T=\int_{s_0}^{+\infty}\l^2(s)ds<+\infty$$ and the blow up is self similar $$T-t=\int_{s}^{+\infty}\l^2(s)ds=  e^{-s}\left[1+O\left(\frac{\log s}{s}\right)\right], \ \ \l(t)=\sqrt{T-t}(1+o(1)).$$ The fact that the above construction defines a Lipschitz manifold of initial data in the $W^{1,2q+2}\cap H^2$ topology is now classical, see \cite{CRS}, and the details are left to the reader. This concludes the proof of Theorem \ref{thmmain}.
\end{proof}

\begin{appendix}



\section{Coercivity estimates}
\label{appendcoerc}

\begin{lemma}[Exponential Hardy]
Let $\nu(z)\geq 0$ and $u(r,z)$ with cylindrical symmetry, then:
\be
\label{estimportante}
\int \nu|u|^{2}(1+r^2)e^{-\frac{r^2}{4}}r^2drdz\lesssim \int \nu(|u|^{2}+|\nabla_Yu|^{2})e^{-\frac{r^2}{4}}r^2drdz.
\ee
Moreover:
\be
\label{estimationapoinds}
\|\Delta_Y u\|_{L^2_{\rho_Y}}^2\lesssim \left\|-\Delta u+\frac 12Y\cdot\nabla u\right\|_{L^2_{\rho_Y}}^2+\|u\|_{H^1_{\rho_Y}}^2.
\ee
\end{lemma}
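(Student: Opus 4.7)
The bound \eqref{estimportante} reduces to controlling the $r^4$-moment of $u$, since the $u^2$ piece from $(1+r^2)|u|^2$ is already dominated by the right-hand side. My plan is to integrate by parts in $r$ using the identity $r\,e^{-r^2/4}=-2\frac{d}{dr}(e^{-r^2/4})$. Writing $r^4 e^{-r^2/4}=r^3\cdot r\,e^{-r^2/4}$ and performing one IBP (boundary terms at $0$ and $\infty$ vanish thanks to the factor $r^3$ and the Gaussian decay) produces
\begin{equation*}
\int_0^\infty u^2 r^4 e^{-r^2/4}dr = 6\int_0^\infty u^2 r^2 e^{-r^2/4}dr + 4\int_0^\infty u\,\partial_r u\,r^3 e^{-r^2/4}dr.
\end{equation*}
I then absorb the cross term by Cauchy--Schwarz with a small parameter, $4|u\,\partial_r u|\,r^3\le \tfrac12 r^4 u^2+8 r^2(\partial_r u)^2$, moving half of the $r^4 u^2$ contribution to the left-hand side. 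Multiplying by $\nu(z)$, integrating in $z$, and bounding $|\partial_r u|\le |\nabla_Y u|$ yields \eqref{estimportante}.

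For \eqref{estimationapoinds}, I set $f:=-\Delta u+\tfrac12 Y\cdot\nabla u$ and expand
\begin{equation*}
\|f\|_{L^2_{\rho_Y}}^2=\|\Delta u\|_{L^2_{\rho_Y}}^2+\tfrac14\|Y\cdot\nabla u\|_{L^2_{\rho_Y}}^2-(\Delta u,\,Y\cdot\nabla u)_{L^2_{\rho_Y}}.
\end{equation*}
The plan is to compute the cross term exactly by two integrations by parts in $Y$, exploiting $\nabla\rho_Y=-\tfrac{Y}{2}\rho_Y$ together with the pointwise identity $\nabla u\cdot\nabla(Y\cdot\nabla u)=|\nabla u|^2+\tfrac12 Y\cdot\nabla(|\nabla u|^2)$. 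After a further IBP using $\nabla\cdot(Y\rho_Y)=(4-\tfrac{|Y|^2}{2})\rho_Y$ in dimension four, the algebra collapses to the clean identity
\begin{equation*}
\|f\|_{L^2_{\rho_Y}}^2 = \|\Delta u\|_{L^2_{\rho_Y}}^2 - \|\nabla u\|_{L^2_{\rho_Y}}^2 + \tfrac14\int\bigl(|Y|^2|\nabla u|^2-(Y\cdot\nabla u)^2\bigr)\rho_Y\,dY.
\end{equation*}
Since the bracketed integrand is nonnegative by Cauchy--Schwarz, I immediately conclude $\|\Delta u\|_{L^2_{\rho_Y}}^2\le \|f\|_{L^2_{\rho_Y}}^2+\|u\|_{H^1_{\rho_Y}}^2$, which is \eqref{estimationapoinds}.

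Neither step presents a real obstacle: both are self-contained weighted integration-by-parts computations. The only mild care required is the rigorous justification of the IBPs, handled by a routine cutoff/density argument in which the Gaussian weight kills boundary contributions. It is worth noting that the matching of constants in the second identity is tied to the dimension $d=4$, which produces exactly the coefficient $1-d/2=-1$ in front of $\|\nabla u\|_{L^2_{\rho_Y}}^2$; in other dimensions an analogous, still manageable, identity would hold.
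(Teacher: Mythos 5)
Your proof is correct and follows essentially the same route as the paper: the same integration by parts identity $\int u^2 r^4 e^{-r^2/4}dr=6\int u^2r^2e^{-r^2/4}dr+4\int u\,\partial_ru\,r^3e^{-r^2/4}dr$ with absorption of the cross term for \eqref{estimportante}, and the same expansion of $\|-\Delta u+\tfrac12 Y\cdot\nabla u\|_{L^2_{\rho_Y}}^2$ leading to the identity $\|\Delta u\|^2_{L^2_{\rho_Y}}-\|\nabla u\|^2_{L^2_{\rho_Y}}+\tfrac14\int(|Y|^2|\nabla u|^2-(Y\cdot\nabla u)^2)\rho_Y$ for \eqref{estimationapoinds}. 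The only cosmetic difference is that you evaluate the cross term by a direct pointwise identity and integration by parts, whereas the paper obtains the same relation by differentiating the scaling $u_\lambda(Y)=u(\lambda Y)$ at $\lambda=1$; both yield the identical final identity.
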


\begin{proof} {\bf step 1} Proof of \eqref{estimportante}. By density, we assume $u\in \matchal D(\Bbb R^4)$. We use $\pa_r\rho_r=-r\rho_r/2$ and integrate by parts to compute:
$$
 \int_0^{+\infty} \e^{2}r^2e^{-\frac{r^2}{4}}r^2dr=\int_0^{+\infty} 6\e^{2}e^{-\frac{r^2}{4}}r^2dr+4\int_0^{+\infty}\pa_r\e\e e^{-\frac{r^2}{4}}r r^2dr
 $$
 and hence from H\"older:
 \bee
 &&\int_0^{+\infty} \e^{2}r^2e^{-\frac{r^2}{4}}r^2dr\lesssim \int_0^{+\infty}|\pa_r\e||r\e| e^{-\frac{r^2}{4}} r^2dr +\int_0^{+\infty} \e^{2}e^{-\frac{r^2}{4}}r^2dr
 \\
 & \leq & c_\delta \int_0^{+\infty} |\pa_r\e|^{2}r^2e^{-\frac{r^2}{4}}r^2dr+\delta\int_0^{+\infty} \e^{2}r^{2}e^{-\frac{r^2}{4}}r^2dr +\int_0^{+\infty} \e^{2}e^{-\frac{r^2}{4}}r^2dr.
\eee
Hence
$$\int_0^{+\infty} \e^{2}r^2e^{-\frac{r^2}{4}}r^2dr\lesssim \int_0^{+\infty} \left[|\pa_r\e|^{2}+\e^{2}\right]r^2e^{-\frac{r^2}{4}}dr.$$
We now multiply by $\nu$ and integrate in $z$, and \eqref{estimportante} is proved.\\

\noindent{\bf step 2}. Proof of \eqref{estimationapoinds}. We compute:
$$\left\|-\Delta u +\frac 12Y\cdot\nabla u\right\|_{L^2_{\rho_Y}}^2=\|\Delta u\|^2_{L^2_{\rho_Y}}+\frac 14\|Y\cdot\nabla u\|_{L^2_{\rho_Y}}^2-\int(\Delta u) Y\cdot\nabla u\rho_Y dY.$$
To compute the crossed term, let $u_\l(Y)=u(\l Y),$ then $$\int|\nabla u_\l(Y)|^2\rho_Y dY=\frac{1}{\l^2}\int|\nabla u(Y)|^2\rho_Y\left(\frac{Y}{\l}\right)dy$$ and hence differentiating in $\l$ and evaluating at $\l=1$:
$$2\int \nabla u\cdot\nabla(Y\cdot\nabla u)\rho_Y dY=\int|\nabla u|^2(-2\rho_Y-Y\cdot\nabla \rho_Y)dy$$
i.e. 
$$2\int Y\cdot\nabla u(\rho_Y\Delta u+\nabla u\cdot\nabla \rho_Y)=2\int|\nabla u|^2\left(\rho_Y +\frac{1}{2}Y\cdot\nabla \rho_Y\right)dy$$
which using $\nabla \rho_Y=-\frac12Y\rho_Y$ becomes:
$$-\int(\Delta u) Y\cdot\nabla u\rho_Y dY=\frac 14\int|\nabla u|^2 |Y|^2\rho_Y-\frac 12\int|Y\cdot\nabla u|^2\rho_Y-\int\rho_Y|\nabla u|^2.$$ 
Hence:
\bee
\left\|-\Delta u +\frac 12Y\cdot\nabla u\right\|_{L^2_{\rho_Y}}^2&=&\|\Delta u\|^2_{L^2_{\rho_Y}}+\frac 14\int \rho_Y(|Y|^2|\nabla u|^2-|Y\cdot\nabla u|^2)-\int\rho_Y |\nabla u|^2\\
&\geq & \|\Delta u\|^2_{L^2_{\rho_Y}}-\|\nabla  u\|_{L^2_{\rho_Y}}^2
\eee
which concludes the proof of \eqref{estimationapoinds}.
\end{proof}

\section{Proof of Lemma \ref{lemma:inversionoftheellipticsystemdefiningVij}}\label{sec:proofinversionoftheellipticsystemdefiningVij}

Recall that $j\in\mathbb{N}$ and $u_j(r)$ is the solution to
$$(\L_r+j)u_j=f_j\textrm{ and }(u_1, \Lambda_r\Phi)=0\textrm{ if }j=1,$$
where $f_j$ satisfies in the case $j=1$ 
$$(f_1,\Lambda_r\Phi)_{L^2_{\rho_r}}=0.$$
Recall also from Lemma \ref{lemma:spectrumoftheoperatormathcalLr} and \eqref{hypspectral} that $\L_r+j$ is a selfadjoint operator with domain $\mathcal{D}(\L_r)\subset L^2(r^2\rho_rdr)$ and
\bee
\textrm{Ker}(\L_r+1)=\la\Lambda_r\Phi\ra\textrm{ and Ker}(\L_r+j)=\{0\}\textrm{ for all }j\in\mathbb{N}\setminus\{1\}.
\eee
We immediately infer that we can solve uniquely  
$$(\L_r+j)u_j=f_j\textrm{ and }(u_1, \Lambda_r\Phi)=0\textrm{ if }j=1,$$
as long as $$f_j\in L^2_{\rho_r}\ \ \mbox{with}\ \ (f_1,\Lambda_r\Phi)_{L^2_{\rho_r}}=0\textrm{ in the case }j=1,$$ 
and there holds the following trivial bound for $k\in\mathbb{N}$ 
\bea\label{eq:trivialboundfollowingkernelofLrandorthogonality}
\sum_{l=0}^{k+2}\|\pa_r^lu_j\|_{L^2_{\rho_r}}\lesssim_k \sum_{l=0}^k\|\pa_r^lf_j\|_{L^2_{\rho_r}}\lesssim_k\sum_{l=0}^k\|\la r\ra^{\frac{2}{p-1}+l-\eta}\partial_r^lf_j\|_{L^{\infty}}.
\eea

Next, we derive a pointwise bound for derivatives of $u_j$ in the region $r\geq 1$. There exists two independent solutions $\varphi_{1,j}$ and $\varphi_{2,j}$ of
$$(\L_r+j)\varphi=0$$
smooth on $(0, +\infty)$ such that 
$$\varphi_{1,j}\sim r^{\frac{2}{p-1}+2j-3}e^{\frac{r^2}{4}},\,\,\,\, \varphi_{2,j}\sim \frac{1}{r^{\frac{2}{p-1}+2j}}\textrm{ as }r\to +\infty,$$
and their Wronskian 
$$W:=\varphi_{1,j}'\varphi_{2,j} - \varphi_{2,j}'\varphi_{1,j}$$
is given by
$$W=\frac{1}{r^2} e^{\frac{r^2}{4}}.$$
See for example Lemma 3.4 in \cite{CRS} for a proof. Then, using variation of constants as well as the estimates \eqref{eq:trivialboundfollowingkernelofLrandorthogonality} satisfied by $u_j$ implies that $u_j$ is given by
\bee
u_j(r) &=& \left(\int_r^{+\infty}f_j\varphi_{2,j}(r')^2e^{-\frac{(r')^2}{4}}dr'\right)\varphi_{1,j}(r)+\left(a_j - \int_1^rf_j\varphi_{1,j}(r')^2e^{-\frac{(r')^2}{4}}dr'\right)\varphi_{2,j}(r)
\eee
 where the constant $a_j$ is given by
 \bee
 a_j &=& \frac{1}{\varphi_{2,j}(1)}\left(u_j(1) - \left(\int_1^{+\infty}f_j\varphi_{2,j}r^2e^{-\frac{r^2}{4}}dr\right)\varphi_{1,j}(1)\right).
 \eee
 This immediately yields the pointwise bound\footnote{Note that we may take $\eta=0$ for $j\geq 1$. Only the case $j=0$ actually contains a log divergence and hence requires $\eta>0$.} 
 $$ \sum_{l=0}^k\|\la r\ra^{\frac{2}{p-1}+l-\eta}\partial_r^lu_j\|_{L^{\infty}(r\geq 1)}\lesssim_{k,\eta} |u_j(1)|+ \sum_{l=0}^k\|\la r\ra^{\frac{2}{p-1}+l-\eta}\partial_r^lf_j\|_{L^{\infty}}.$$ 

Finally, we derive a pointwise bound for $u$ in $r\leq 1$. By the Sobolev embedding in dimension 3 and \eqref{eq:trivialboundfollowingkernelofLrandorthogonality}, we have 
\bee
\sum_{l=0}^k \|\partial^l_ru_j\|_{L^{\infty}(r\leq 1)} &\lesssim& \sum_{l=0}^k\|\partial^l_ru_j\|_{H^2(r\leq 1)}\\
&\lesssim& \sum_{l=0}^{k+2}\|\pa_r^lu_j\|_{L^2_{\rho_r}}\\
&\lesssim_k& \sum_{l=0}^k\|\la r\ra^{\frac{2}{p-1}+l-\eta}\partial_r^lf_j\|_{L^{\infty}}.
\eee
This concludes the proof of Lemma \ref{lemma:inversionoftheellipticsystemdefiningVij}.
\end{appendix}

\subsection*{Funding}  P.R. and J.S are supported by the ERC-2014-CoG 646650 SingWave. F.M. is supported by the ERC Advanced grant BLOWDISOL 291214.

\subsection*{Conflict of Interest}  The authors declare that they have no conflict of interest.

\end{document}